\tikzset{
 ->-/.style = {
  decoration = {markings, mark=at position #1 with {\arrow[scale=1.5]{>}}},
  postaction = {decorate}
 }
}
\numberwithin{subsection}{section}
\numberwithin{equation}{section}
\newtheorem{thm}{Theorem}[section]
\newtheorem{prop}[thm]{Proposition}
\newtheorem{cor}[thm]{Corollary}
\newtheorem{lem}[thm]{Lemma}
\theoremstyle{remark}
\newtheorem{rem}[thm]{Remark}
\newtheorem{ex}[thm]{Example}
\theoremstyle{definition}
\newtheorem{defn}[thm]{Definition}
\newtheorem{nota}[thm]{Notation}
\newcommand{\F}{\mathbb{F}}
\newcommand{\Z}{\mathbb{Z}}
\newcommand{\cat}[1]{\mathbf{\mathcal{#1}}} \newcommand{\CC}{\mathbf{\mathcal{C}}}
\newcommand{\KK}{\mathbf{\mathcal{K}}}
\newcommand{\OO}{\mathbf{\mathcal{O}}}
\newcommand{\steen}{\mathfrak{A}}
\newcommand{\TT}{\mathbf{\mathcal{T}}}
\newcommand{\al}{\alpha}
\newcommand{\be}{\beta}
\newcommand{\De}{\Delta}
\newcommand{\de}{\delta}
\newcommand{\del}{\partial}
\newcommand{\ep}{\epsilon}
\newcommand{\Ga}{\Gamma}
\newcommand{\ga}{\gamma}
\newcommand{\la}{\lambda}
\newcommand{\phy}{\varphi}
\newcommand{\Si}{\Sigma}
\newcommand{\bu}{\bullet}
\newcommand{\inj}{\hookrightarrow}
\newcommand{\pb}{\ar@{}[dr]|{\mbox{\huge $\lrcorner$}}}
\newcommand{\po}{\ar@{}[dr]|{\mbox{\huge $\ulcorner$}}}
\newcommand{\ral}{\xrightarrow} \newcommand{\Ra}{\Rightarrow}
\newcommand{\surj}{\twoheadrightarrow}
\newcommand{\tild}{\widetilde}
\newcommand{\dfn}{:=}
\newcommand{\inv}{\boxminus}
\newcommand{\lan}{\left\langle}
\newcommand{\ot}{\otimes}
  \newcommand{\ran}{\right\rangle}
\newcommand{\sm}{\wedge}
\newcommand{\sq}{\square \,}
\newcommand{\x}{\times}
\newcommand{\ttA}{\mathbf{Alg_{(1,2)}}}
\newcommand{\Ab}{\mathbf{Ab}}
\newcommand{\ct}[1]{\mathbf{#1}} \newcommand{\dgA}[1][]{\ifthenelse{\equal{#1}{}}{\mathbf{dgA}}{\mathbf{dgA} \lan #1 \ran}}
\newcommand{\DGpd}{\mathbf{Gpd_{(1,2)}}}
\newcommand{\finMod}{\mathbf{Mod}^{\textrm{fin}}}
\newcommand{\gA}[1][]{\ifthenelse{\equal{#1}{}}{\mathbf{gA}}{\mathbf{gA} \lan #1 \ran}}
\newcommand{\Gpd}{\mathbf{Gpd}}
\newcommand{\Ho}{\mathrm{Ho}}
\newcommand{\Mod}{\mathbf{Mod}}
\newcommand{\Spec}{\mathbf{Spec}}
\newcommand{\Topp}{\mathbf{Top}}
\DeclareMathOperator{\AlgCub}{\Theta}
\DeclareMathOperator{\Aut}{Aut}
\DeclareMathOperator{\Comp}{Comp}
\DeclareMathOperator{\Ext}{Ext}
\DeclareMathOperator{\Hom}{Hom}
\DeclareMathOperator{\im}{im}
\DeclareMathOperator{\Nul}{Nul}
\DeclareMathOperator{\Ob}{Ob}
\DeclareMathOperator{\Star}{Star}
\newcommand{\bigpd}{\mathrm{BiGpd}}
\newcommand{\id}{\mathrm{id}}
\newcommand{\opp}{\mathrm{op}}
\newcommand{\sk}{\mathrm{sk}}
\newcommand{\Def}{\textbf}
\begin{document}

\title{$2$-track algebras and the Adams spectral sequence} 

\author{Hans-Joachim Baues}
\email{baues@mpim-bonn.mpg.de}
\address{Max-Planck-Institut f\"ur Mathematik\\
Vivatsgasse 7\\
53111 Bonn\\
Germany}

\author{Martin Frankland}
\email{franklan@mpim-bonn.mpg.de}
\address{Max-Planck-Institut f\"ur Mathematik\\
Vivatsgasse 7\\
53111 Bonn\\
Germany}

\subjclass[2010]{Primary: 55T15; Secondary: 18G50, 55S20.}

\keywords{Adams spectral sequence, tertiary cohomology operation, tertiary chain complex, tertiary Ext-group, Toda bracket, 2-track algebra, 2-track groupoid, bigroupoid, double groupoid, cubical set.}

\date{\today}

\begin{abstract}
In previous work of the first author and Jibladze, the $E_3$-term of the Adams spectral sequence was described as a secondary derived functor, defined via secondary chain complexes in a groupoid-enriched category. This led to computations of the $E_3$-term using the algebra of secondary cohomology operations. In work with Blanc, an analogous description was provided for all higher terms $E_r$. In this paper, we introduce $2$-track algebras and tertiary chain complexes, and we show that the $E_4$-term of the Adams spectral sequence is a tertiary Ext group in this sense. This extends the work with Jibladze, while specializing the work with Blanc in a way that should be more amenable to computations.
\end{abstract}

\dedicatory{Dedicated to Ronnie Brown on the occasion of his eightieth birthday.}

\maketitle

\tableofcontents

\section{Introduction}

A major problem in algebraic topology consists of computing homotopy classes of maps between spaces or spectra, notably the stable homotopy groups of spheres $\pi_*^S(S^0)$. One of the most useful tools for such computations is the Adams spectral sequence \cite{Adams58} (and its unstable analogues \cite{BousfieldK72}), based on ordinary mod $p$ cohomology. Given finite spectra $X$ and $Y$, Adams constructed a spectral sequence of the form:
\[
E_2^{s,t} = \Ext_{\steen}^{s,t} \left( H^*(Y; \F_p), H^*(X;\F_p) \right) \Ra [\Si^{t-s} X,Y^{\wedge}_p]
\]
where $\steen$ is the mod $p$ Steenrod algebra, consisting of primary stable mod $p$ cohomology operations, and $Y^{\wedge}_p$ denotes the $p$-completion of $Y$. In particular, taking sphere spectra $X = Y = S^0$, one obtains a spectral sequence
\[
E_2^{s,t} = \Ext_{\steen}^{s,t} \left(\F_p, \F_p \right) \Ra \pi_{t-s}^S(S^0)^{\wedge}_p
\]
abutting to the $p$-completion of the stable homotopy groups of spheres. 
The differential $d_r$ is determined by $r^{\text{th}}$ order cohomology operations \cite{Maunder64}. 
In particular, secondary cohomology operations determine the differential $d_2$ and thus the $E_3$-term. The algebra of secondary operations was studied in \cite{Baues06}. In \cite{BauesJ06}, the first author and Jibladze developed secondary chain complexes and secondary derived functors, and showed that the Adams $E_3$-term is given by secondary $\Ext$ groups of the secondary cohomology of $X$ and $Y$. They used this in \cite{BauesJ11}, along with the algebra of secondary operations, to construct an algorithm that computes the differential $d_2$.

Primary operations in mod $p$ cohomology are encoded by the homotopy category $\Ho (\KK)$ of the Eilenberg-MacLane mapping theory $\KK$, consisting of finite products of Eilenberg-MacLane spectra of the form $\Si^{n_1} H\F_p \x \cdots \x \Si^{n_k} H\F_p$. 
More generally, the $n^{\text{th}}$ Postnikov truncation $P_n \KK$ of the Eilenberg-MacLane mapping theory encodes operations of order up to $n+1$, which in turn determine the Adams differential $d_{n+1}$ and thus the $E_{n+2}$-term \cite{BauesB10}. However, $P_n \KK$ contains too much information for practical purposes. In \cite{Baues15}, the first author and Blanc extracted from $P_n \KK$ the information needed in order to compute the Adams differential $d_{n+1}$. The resulting algebraic-combinatorial structure is called an \emph{algebra of left $n$-cubical balls}.

In this paper, we specialize the work of \cite{Baues15} to the case $n=2$. Our goal is to provide an alternate structure which encodes an algebra of left $2$-cubical balls, but which is more algebraic in nature and better suited for computations. The combinatorial difficulties in an algebra of left $n$-cubical balls arise from triangulations of the sphere $S^{n-1} = \del D^n$. In the special case $n=2$, triangulations of the circle $S^1$ are easily described, unlike in the case $n > 2$. Our approach also extends the work in \cite{BauesJ06} from secondary chain complexes to tertiary chain complexes.

\subsection*{Organization and main results}

We define the notion of $2$-track algebra (Definition~\ref{def:2TrackAlg}) and show that each $2$-track algebra naturally determines an algebra of left $2$-cubical balls (Theorem~\ref{2TrackAlgBalls}). Building on \cite{Baues15}, we show that higher order resolutions always exist in a $2$-track algebra (Theorem~\ref{thm:Resolution}). We show that a suitable $2$-track algebra related to the Eilenberg-MacLane mapping theory recovers the Adams spectral sequence up to the $E_4$-term (Theorem~\ref{thm:d3ASS}). We show that the spectral sequence only depends on the weak equivalence class of the $2$-track algebra (Theorem~\ref{thm:WeakEqClass}).

\begin{rem}
This last point is important in view of the strictification result for secondary cohomology operations: these can be encoded by a graded pair algebra $B_*$ over $\Z/p^2$ \cite{Baues06}*{\S 5.5}. The secondary $\Ext$ groups of the $E_3$-term turn out to be the usual $\Ext$ groups over $B_*$ \cite{BauesJ11}*{Theorem 3.1.1}, a key fact for computations. We conjecture that a similar strictification result holds for tertiary operations, i.e., in the case $n=2$.
\end{rem}

Appendix~\ref{sec:Models2Types} explains why $2$-track groupoids are not models for homotopy $2$-types, and how to extract the underlying $2$-track groupoid from a bigroupoid or a double groupoid.

\subsection*{Acknowledgments}

We thank the referee for their helpful comments. The second author thanks the Max-Planck-Institut f\"ur Mathematik Bonn for its generous hospitality, as well as David Blanc, Robert Bruner, Dan Christensen, and Dan Isaksen for useful conversations.

\section{Cubes and tracks in a space}

In this section, we fix some notation and terminology regarding cubes and groupoids.

\begin{defn} \label{def:Cubes}
Let $X$ be a topological space.

An \Def{$n$-cube} in $X$ is a map $a \colon I^n \to X$, where $I = [0,1]$ is the unit interval. For example, a $0$-cube in $X$ is a point of $X$, and a $1$-cube in $X$ is a path in $X$.

An $n$-cube can be restricted to $(n-1)$-cubes along the $2n$ faces of $I^n$. For $1 \leq i \leq n$, denote:
\begin{align*}
&d_i^0(a) = a \text{ restricted to } I \x I \x \ldots \x \overbrace{\{0\}}^{i} \x \ldots \x I \\
&d_i^1(a) = a \text{ restricted to } I \x I \x \ldots \x \overbrace{\{1\}}^{i} \x \ldots \x I.
\end{align*}

An \Def{$n$-track} in $X$ is a homotopy class, relative to the boundary $\del I^n$, of an $n$-cube. If $a \colon I^n \to X$ is an $n$-cube in $X$, denote by $\{ a \}$ the corresponding $n$-track in $X$, namely the homotopy class of $a$ rel $\del I^n$.
\end{defn}

In particular, for $n=1$, a $1$-track $\{a\}$ is a path homotopy class, i.e., a morphism in the fundamental groupoid of $X$ from $a(0)$ to $a(1)$. Let us fix our notation regarding groupoids. In this paper, we consider only \emph{small} groupoids.

\begin{nota}
A \Def{groupoid} is a (small) category in which every morphism is invertible. Denote the data of a groupoid by $G = \left( G_0, G_1, \de_0, \de_1, \id^{\sq}, \sq, (-)^{\inv} \right)$, where:
\begin{itemize}
 \item $G_0 = \Ob(G)$ is the set of objects of $G$.
 \item $G_1 = \Hom(G)$ is the set of morphisms of $G$. The set of morphisms from $x$ to $y$ is denoted $G(x,y)$. We write $x \in G$ and $\deg(x) = 0$ for $x \in G_0$, and $\deg(x) = 1$ for $x \in G_1$.
 \item $\de_0 \colon G_1 \to G_0$ is the source map.
 \item $\de_1 \colon G_1 \to G_0$ is the target map.
 \item $\id^{\sq} \colon G_0 \to G_1$ sends each object $x$ to its corresponding identity morphism $\id^{\sq}_{x}$.
 \item $\sq \colon G_1 \x_{G_0} G_1 \to G_1$ is composition in $G$.
 \item $f^{\inv} \colon y \to x$ is the inverse of the morphism $f \colon x \to y$.
\end{itemize}
Groupoids form a category $\Gpd$, where morphisms are functors between groupoids.

For any object $x \in G_0$, denote by $\Aut_G(x) = G(x,x)$ the automorphism group of $x$.

Denote by $\Comp(G) = \pi_0(G)$ the components of $G$, i.e., the set of isomorphism classes of objects $G_0 / \sim$.

Denote the fundamental groupoid of a topological space $X$ by $\Pi_{(1)}(X)$.
\end{nota}

\begin{defn}
Let $X$ be a pointed space, with basepoint $0 \in X$. The constant map $0 \colon I^n \to X$ with value $0 \in X$ is called the \Def{trivial $n$-cube}.

A \Def{left $1$-cube} or \Def{left path} in $X$ is a map $a \colon I \to X$ satisfying $a(1) = 0$, that is, $d_1^1(a) = 0$, the trivial $0$-cube. In other words, $a$ is a path in $X$ from a point $a(0)$ to the basepoint $0$. We denote $\de a = a(0)$.

A \Def{left $2$-cube} in $X$ is a map $\al \colon I^2 \to X$ satisfying $\al(1,t) = \al(t,1) = 0$ for all $t \in I$, that is, $d_1^1(\al) = d_2^1(\al) = 0$, the trivial $1$-cube.

More generally, a \Def{left $n$-cube} in $X$ is a map $\al \colon I^n \to X$ satisfying $\al(t_1, \ldots, t_n) = 0$ whenever some coordinate satisfies $t_i = 1$. In other words, for all $1 \leq i \leq n$ we have $d_i^1(\al) = 0$, the trivial $(n-1)$-cube.

A \Def{left $n$-track} in $X$ is a homotopy class, relative to the boundary $\del I^n$, of a left $n$-cube.
\end{defn}

The equality $I^{m+n} = I^m \x I^n$ allows us to define an operation on cubes.

\begin{defn} \label{def:ProdCubes}
Let $\mu \colon X \x X' \to X''$ be a map, for example a composition map in a topologically enriched category $\CC$. For $m,n \geq 0$, consider cubes
\begin{align*}
&a \colon I^m \to X \\
&b \colon I^n \to X'.
\end{align*}
The \Def{$\ot$-composition} of $a$ and $b$ is the $(m+n)$-cube $a \ot b$ defined as the composite
\begin{equation} \label{TensorCubes}
a \ot b \colon I^{m+n} = I^m \x I^n \ral{a \x b} X \x X' \ral{\mu} X''. 
\end{equation}

For $m=n$, the \Def{pointwise composition} of $a$ and $b$ is the $n$-cube defined as the composite
\begin{equation} \label{PointwiseCubes}
ab \colon I^{n} \ral{(a,b)} X \x X' \ral{\mu} X''. 
\end{equation}
The pointwise composition is the restriction of the $\ot$-composition along the diagonal:
\[
\xymatrix{
I^n \ar[r]^-{\De} \ar@/_2pc/[rr]_{ab} & I^n \x I^n \ar[r]^-{a \ot b} & X''.
}
\]
\end{defn}

\begin{rem} \label{RelationsCompo}
For $m=n=0$, the $0$-cube $x \ot y = xy$ is the pointwise composition, which is the composition in the underlying category. For higher dimensions, there are still relations between the $\ot$-composition and the pointwise composition. In suggestive formulas, pointwise composition of paths is given by $(ab)(t) = a(t) b(t)$ for all $t \in I$, whereas the $\ot$-composition of paths is the $2$-cube given by $(a \ot b)(s,t) = a(s) b(t)$.
\end{rem}

Assume moreover that $\mu$ satisfies
\[
\mu(x,0) = \mu(0,x') = 0
\]
for the basepoints $0 \in X, 0 \in X', 0 \in X''$. For example, $\mu$ could be the composition map in a category $\CC$ enriched in $(\Topp_*, \sm)$, the category of pointed topological spaces with the smash product as monoidal structure. If $a$ and $b$ are left cubes, then $a \ot b$ and $ab$ are also left cubes.

\section{$2$-track groupoids}

We now focus on left $2$-tracks in a pointed space $X$, and observe that they form a groupoid. Define the groupoid $\Pi_{(2)}(X)$ with object set:
\[
\Pi_{(2)}(X)_0 = \text{ set of left $1$-cubes in $X$}
\]
and morphism set:
\[
\Pi_{(2)}(X)_1 = \text{ set of left $2$-tracks in $X$}
\]
where the source $\de_0$ and target $\de_1$ of a left $2$-track $\al \colon I \x I \to X$ are given by restrictions
\begin{align*}
\de_0(\al) = d_1^0(\al) \\
\de_1(\al) = d_2^0(\al)
\end{align*}
and note in particular $\de \de_0(\al) = \de \de_1(\al) = \al(0,0)$. In other words, a morphism $\al$ from $a$ to $b$ looks like this:
\[
\xymatrix @C=5pc @R=5pc {
\ar@{-}[r]^0 & \ar@{-}[d]^0 \\
\de a = \de b \ar@{-}[u]|-*=0@{>}^-{a=\de_0(\al)} \ar@{-}[r]|-*=0@{>}_-{b = \de_1(\al)} \urtwocell<\omit>{\al} & \\
}
\]

\begin{rem} \label{rem:Globular}
Up to reparametrization, a left $2$-track $\al \colon a \Ra b$ corresponds to a path homotopy from $a$ to $b$, which can be visualized in a globular picture:
\[
\xymatrix{
**[l] \de a = \de b \rtwocell<8>^a_b{\al} & 0. \\
}
\]
However, the $\ot$-composition will play an important role in this paper, which is why we adopt a cubical approach, rather than globular or simplicial.
\end{rem}

Composition $\be \sq \al$ of left $2$-tracks is described by the following picture:
\begin{equation} \label{eq:Gluing2Tracks}
\xymatrix @C=5pc @R=5pc {
\ar@{-}[r]^0 & \ar@{-}[d]^0 \\
\ar@{-}[d]|-*=0@{>}_c \ar@{-}[u]|-*=0@{>}^a \ar@{-}[r]|-*=0@{>}_b \urtwocell<\omit>{\al} \drtwocell<\omit>{\be} & \ar@{-}[d]^0 \\
\ar@{-}[r]_0 & \\
}
\end{equation}

\begin{rem}
To make this definition precise, let $\al \colon a \Ra b$ and $\be \colon b \Ra c$ be left $2$-tracks in $X$, i.e., composable morphisms in $\Pi_{(2)}(X)$. Choose representative maps $\tild{\al}, \tild{\be} \colon I^2 \to X$. Consider the map $f_{\al,\be} \colon [0,1] \x [-1,1] \to X$ pictured in \eqref{eq:Gluing2Tracks}. That is, define
\[
f(s,t) = \begin{cases}
\tild{\al}(s,t) &\text{if } 0 \leq t \leq 1 \\
\tild{\be}(-t,s) &\text{if } -1 \leq t \leq 0. \\
\end{cases}
\]
Now consider the reparametrization map $w \colon I^2 \to [0,1] \x [-1,1]$ illustrated in this picture:
\[
\xymatrix{
\ar@{-}[rr] & & & & & \ar@{-}[rr] & & \\
& & & \ar[r]^w & & & & \\
\ar@{-}[uu]|-*=0@{>} \ar@{-}[uur]|-*=0@{>} \ar@{-}[uurr]|-*=0@{>} \ar@{-}[urr]|-*=0@{>} \ar@{-}[rr]|-*=0@{>} & & \ar@{-}[uu] & & & \ar[uu]|-*=0@{>} \ar@{-}[uurr]|-*=0@{>} \ar@{-}[rr]|-*=0@{>} \ar@{-}[ddrr]|-*=0@{>} \ar@{-}[dd]|-*=0@{>} & & \\
& & & & & & & \\
& & & & & \ar@{-}[rr] & & \ar@{-}[uuuu] \\
}
\]
Explicitly, the restriction $w \vert_{\del I^2}$ to the boundary is the piecewise linear map satisfying
\[
\begin{cases}
w(0,0) = (0,0) \\
w(0,1) = (0,1) \\
w(\frac{1}{2},1) = (1,1) \\
w(1,1) = (1,0) \\
w(1, \frac{1}{2}) = (1,-1) \\
w(1,0) = (0,-1) \\
\end{cases}
\]
and $w(x)$ is defined for points $x \in I^2$ in the interior as follows. Write $x = p (0,0) + q y$ as a unique convex combination of $(0,0)$ and a point $y$ on the boundary $\del I^2$. Then define $w(x) = p w(0,0) + q w(y) = q w(y)$. Finally, the composition $\be \sq \al \colon a \Ra c$ is $\{ f_{\al,\be} \circ w \}$, the homotopy class of the composite
\[
\xymatrix{
I^2 \ar[r]^-w & [0,1] \x [-1,1] \ar[r]^-{f_{\al,\be}} & X \\
}
\]
relative to the boundary $\del I^2$.

In other notation, we have inclusions $d_2^0 \colon I^1 \inj I^2$ as the bottom edge $I \x \{0\}$ and $d_1^0 \colon I^1 \inj I^2$ as the left edge $\{0\} \x I$, our $w$ is a map $w \colon I^2 \to I^2 \cup_{I^1} I^2$, and $\be \sq \al$ is the homotopy class of the composite
\[
\xymatrix{
I^2 \ar[r]^-w & I^2 \cup_{I^1} I^2 \ar[r]^-{\left[ \al \: \be \right]} & X. \\
}
\]
\end{rem}

Given a left path $a$ in $X$, the identity of $a$ in the groupoid $\Pi_{(2)}(X)$ is the left $2$-track is pictured here:
\[
\xymatrix @C=5pc @R=5pc {
\ar@{-}[rr]^0 & & \ar@{-}[dd]^0 \\
& & \\
\ar@{-}[uu]|-*=0@{>}^a \ar@{-}[uur]|-*=0@{>}^a \ar@{-}[uurr]|-*=0@{>}^{} \ar@{-}[urr]|-*=0@{>}_{a} \ar@{-}[rr]|-*=0@{>}_a \uurrtwocell<\omit>{<4>\hspace{1em}\id^{\sq}_a} & & \\
}
\]
More precisely, for points $x \in I^2$ in the interior, write $x = p (0,0) + q y$ as a unique convex combination of $(0,0)$ and a point $y$ on the boundary $\del I^2$. Then define $\id^{\sq}_a(x) = a(q)$.

The inverse $\al^{\inv} \colon b \Ra a$ of a left $2$-track $\al \colon a \Ra b$ is the homotopy class of the composite $\al \circ T$, where $T \colon I^2 \to I^2$ is the map swapping the two coordinates: $T(x,y) = (y,x)$.

\begin{lem}
Given a pointed topological space $X$, the structure described above makes $\Pi_{(2)}(X)$ into a groupoid, called the \Def{groupoid of left $2$-tracks} in $X$.
\end{lem}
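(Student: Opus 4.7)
The plan is to verify the groupoid axioms directly from the cubical definitions given, with the conceptual guidance from Remark~\ref{rem:Globular} that a left $2$-track from $a$ to $b$ is, up to reparametrization, a path homotopy from $a$ to $b$ in the globular sense. In effect, $\Pi_{(2)}(X)$ should be isomorphic to the fundamental groupoid $\Pi_{(1)}(P_0 X)$ of the mapping space $P_0 X \dfn \Map((I, \{1\}), (X, 0))$ of left paths, and this identification provides a conceptual shortcut once the direct verification becomes cumbersome.

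First I would check that each structure map is well-defined on homotopy classes relative to $\del I^2$. For the face maps $\de_0, \de_1$ this is immediate, since restricting to a face of $I^2$ carries homotopies rel $\del I^2$ to homotopies rel the boundary of the face. For the composition $\be \sq \al$, the glued map $f_{\al,\be}$ on $[0,1] \x [-1,1]$ varies continuously with the choice of representatives $\tild \al$ and $\tild \be$, so a homotopy rel boundary on either factor induces one on $f_{\al,\be} \circ w$. Well-definedness of $\id^{\sq}_a$ and $\al^{\inv}$ is similarly immediate, since both are built by explicit continuous prescriptions from the representative cube. Next I would verify the axioms. Unitality $\al \sq \id^{\sq}_a = \al = \id^{\sq}_b \sq \al$ reduces to noting that $\id^{\sq}_a$ is essentially a degenerate $2$-cube (constant in the radial direction from the corner), so gluing it to $\al$ and reparametrizing via $w$ yields $\tild \al$ up to a linear interpolation of reparametrization maps, which supplies the required homotopy rel $\del I^2$. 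The inverse axioms $\al \sq \al^{\inv} = \id^{\sq}_b$ and $\al^{\inv} \sq \al = \id^{\sq}_a$ follow from the standard cap-collapsing homotopy: on the $L$-shaped domain, $\al$ and its coordinate-swap glue to a $2$-cube whose image lies in the trace of $\al$, which can be contracted rel boundary to an identity track by convex interpolation of reparametrizations. Associativity $\ga \sq (\be \sq \al) = (\ga \sq \be) \sq \al$ is proven by producing a single triple-pasting cube obtained by gluing three copies of $I^2$ around a common corner and exhibiting two reparametrizations $I^2 \to I^2 \cup_{I^1} I^2 \cup_{I^1} I^2$, one for each bracketing; the two are homotopic through reparametrization maps, so yield the same homotopy class in $X$.

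The main obstacle is the bookkeeping of the reparametrization map $w$ and the construction of explicit homotopies between competing reparametrizations, especially for associativity. This can be streamlined by the conceptual identification $\Pi_{(2)}(X) \cong \Pi_{(1)}(P_0 X)$ indicated above: a left $2$-cube, after precomposition with a homeomorphism of $I^2$ collapsing the $L$-shaped contour $(\{0\} \x I) \cup (I \x \{0\})$ onto a single edge, corresponds under the standard adjunction to a path in $P_0 X$ from $a$ to $b$. This bijection sends the cubical gluing $\be \sq \al$, identity $\id^{\sq}_a$, and coordinate-swap inverse $\al^{\inv}$ to ordinary concatenation, constant path, and reverse path in $P_0 X$, respectively, so the groupoid axioms for $\Pi_{(2)}(X)$ follow at once from the well-known groupoid structure on $\Pi_{(1)}(P_0 X)$.
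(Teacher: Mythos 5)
Your direct-verification outline (well-definedness, units via degenerate cubes, inverses via a cap-collapsing homotopy, associativity via two reparametrizations of a triple pasting) is sound and is presumably what the paper means by labelling the proof ``Standard.'' However, the conceptual shortcut you lean on is not correct as stated, and since you offer it precisely to resolve what you call ``the main obstacle,'' this is a genuine gap.

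The problem: with $P_0 X = \Map\bigl((I,\{1\}),(X,0)\bigr)$ as you define it, the space $P_0 X$ is \emph{contractible}. The contraction $H(u,f)(t) = f\bigl(t + u(1-t)\bigr)$ sends every left path to the constant path at $0$, so $\Pi_{(1)}(P_0 X)$ is equivalent to the trivial groupoid; in particular all its hom-sets are singletons. By contrast, the paper's Proposition immediately following this lemma shows $\Aut_{\Pi_{(2)}(X)}(0) \cong \pi_2(X)$, so $\Pi_{(2)}(X)$ has nontrivial automorphism groups for, say, $X = S^2$. There is also a discrepancy at the level of components: two left paths $a, b$ with $\de a \neq \de b$ admit no morphism between them in $\Pi_{(2)}(X)$, whereas they are always joined by a path in the contractible space $P_0 X$. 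So the claimed isomorphism $\Pi_{(2)}(X) \cong \Pi_{(1)}(P_0 X)$ fails both on morphism sets and on components. What a left $2$-cube $\al$ actually gives under the fold-out reparametrization and adjunction is a path $H \colon I \to P_0 X$ with the \emph{extra} constraint that $H(s)(0) = \de a$ is constant in $s$, i.e., $H$ is a path inside the fiber $\ev_0^{-1}(\de a)$ of the evaluation fibration $\ev_0 \colon P_0 X \to X$, $f \mapsto f(0)$. The correct identification is therefore
\[
\Pi_{(2)}(X) \;\cong\; \coprod_{x \in X} \Pi_{(1)}\bigl(P_{x,0}X\bigr),
\qquad P_{x,0}X := \bigl\{f \colon I \to X \;\bigm|\; f(0)=x,\ f(1)=0\bigr\},
\]
the fiberwise fundamental groupoid of $\ev_0$, and under this identification $\sq$-composition, $\id^{\sq}$ and $(-)^{\inv}$ do go to concatenation, constant path and reversal, giving the streamlined argument you were after. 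As written, though, the shortcut proves something false, and the sketchier direct verification would need to be carried out in full to close the proof.
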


\begin{proof}
Standard.
\end{proof}

\begin{defn} \label{def:StrictlyAbel}
A groupoid $G$ is \Def{abelian} if the group $\Aut_G(x)$ is abelian for every object $x \in G_0$. The groupoid $G$ is \Def{strictly abelian} if it is pointed (with basepoint $0 \in G_0$), and is equipped with a family of isomorphisms
\[
\psi_x \colon \Aut_G(x) \ral{\simeq} \Aut_G(0)
\]
indexed by all objects $x \in G_0$, such that the diagram 
\begin{equation} \label{eq:CompatIso}
\xymatrix{
\Aut_G(y) \ar[dr]_{\psi_y} \ar[r]^{\phy^f} & \Aut_G(x) \ar[d]^{\psi_x} \\
& \Aut_G(0)
}
\end{equation}
commutes for every map $f \colon x \to y$ in $G$, where $\phy^f$ denotes the ``change of basepoint'' isomorphism
\begin{align*}
\phy^f \colon \Aut_G(y) &\ral{\simeq} \Aut_G(x) \\
\al &\mapsto \phy^f(\al) = f^{\inv} \sq \al \sq f.
\end{align*}
\end{defn}

\begin{rem}
A strictly abelian groupoid is automatically abelian. Indeed, the compatibility condition \eqref{eq:CompatIso} applied to automorphisms $f \colon 0 \to 0$ implies that conjugation $\phy^f \colon \Aut_G(0) \to \Aut_G(0)$ is the identity.
\end{rem}

\begin{defn}
A groupoid $G$ is \Def{pointed} if it has a chosen basepoint, i.e., an object $0 \in G_0$. Here $0$ is an abuse of notation: the basepoint is not assumed to be an initial object for $G$.

The \Def{star} of a pointed groupoid $G$ is the set of all morphisms to the basepoint $0$, denoted by:
\[
\Star(G) = \left\{ f \in G_1 \mid \de_1(f) = 0 \right\}.
\]
For a morphism $f \colon x \to 0$ in $\Star(G)$, we write $\de f = \de_0 f = x$.
 
If $G$ has a basepoint $0 \in G_0$, then we take $\id^{\sq}_{0} \in G_1$ as basepoint for the set of morphisms $G_1$ and for $\Star(G) \subseteq G_1$; we sometimes write $0 = \id^{\sq}_{0}$. Moreover, we take the component of the basepoint $0$ as basepoint for $\Comp(G)$, the set of components of $G$.
\end{defn}

\begin{prop}
$\Pi_{(2)}(X)$ is a strictly abelian groupoid, and it satisfies $\Comp \Pi_{(2)}(X) \simeq \Star \Pi_{(1)}(X)$.
\end{prop}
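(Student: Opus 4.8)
The plan is to identify $\Pi_{(2)}(X)$ with a disjoint union of fundamental groupoids of based path spaces, after which both assertions follow by inspection, the abelianness being that of $\pi_2(X,0)$. For a point $x \in X$, let $P_x$ be the space of paths $\ga \colon I \to X$ with $\ga(0) = x$ and $\ga(1) = 0$, topologized as a subspace of $\Map(I,X)$. By definition, the objects of $\Pi_{(2)}(X)$ are exactly the points of $\coprod_{x \in X} P_x$, with $a \in P_x$ precisely when $\de a = x$. By Remark~\ref{rem:Globular}, after a reparametrization a left $2$-track $\al \colon a \Ra b$ is the same datum as a homotopy rel endpoints between the paths $a$ and $b$ --- equivalently, up to homotopy rel endpoints, a path from $a$ to $b$ inside $P_{\de a}$; in particular there is no morphism $a \Ra b$ unless $\de a = \de b$. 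One checks that under this dictionary the gluing law \eqref{eq:Gluing2Tracks} becomes concatenation of paths, $\id^{\sq}_a$ becomes the constant path at $a$, and $\al^{\inv}$ the reverse path, so that
\[
\Pi_{(2)}(X) \cong \coprod_{x \in X} \Pi_{(1)}(P_x),
\]
the union running over all points of $X$ (with $P_x = \emptyset$ unless $x$ lies in the path component of $0$).

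Granting this, the second assertion is immediate: $\Comp \Pi_{(2)}(X) = \coprod_{x \in X} \pi_0(P_x)$, and an element of the right-hand side is a point $x \in X$ together with a homotopy class rel endpoints of a path from $x$ to $0$ --- that is, exactly a morphism of $\Pi_{(1)}(X)$ with target $0$, an element of $\Star \Pi_{(1)}(X)$. This bijection is pointed, carrying the component of the trivial $1$-cube to $\id^{\sq}_0$. Likewise, for any object $a$ we get $\Aut_{\Pi_{(2)}(X)}(a) \cong \pi_1(P_{\de a}, a)$; for the basepoint, $P_0$ is the loop space $\Om = \Om(X,0)$ and $a = 0$ is its constant loop, so $\Aut_{\Pi_{(2)}(X)}(0) \cong \pi_1(\Om, \cst) = \pi_2(X,0)$.

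It remains to check that $\Pi_{(2)}(X)$ is abelian and to construct the isomorphisms $\psi_a$ with the compatibility \eqref{eq:CompatIso}. Fix an object $a$, a path from $\de a$ to $0$; concatenation with the reverse path $\ol a$ gives $q_a \colon P_{\de a} \to \Om$, $\ga \mapsto \ol a \cdot \ga$, a homotopy equivalence (concatenation with $a$ and with $\ol a$ being mutually inverse up to homotopy rel endpoints) carrying $a$ to $\ol a \cdot a$, which is canonically homotopic rel endpoints to $\cst$. Let $\psi_a$ be $(q_a)_*$ on $\pi_1$ followed by the change of basepoint along this canonical cancellation path, so that
\[
\psi_a \colon \Aut_{\Pi_{(2)}(X)}(a) = \pi_1(P_{\de a}, a) \ral{\simeq} \pi_1(\Om, \cst) = \pi_2(X,0) = \Aut_{\Pi_{(2)}(X)}(0)
\]
is an isomorphism, with $\psi_0 = \id$. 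Since every $\Aut_{\Pi_{(2)}(X)}(a)$ is thereby identified with the abelian group $\pi_2(X,0)$, the groupoid $\Pi_{(2)}(X)$ is abelian. Finally, for \eqref{eq:CompatIso}: a morphism $f \colon x \to y$ is, up to homotopy rel endpoints, a path in $P_{\de x} = P_{\de y}$ from $x$ to $y$, and the homotopy $(\ga,s) \mapsto \ol{f(s)} \cdot \ga$ from $q_x$ to $q_y$ supplies, along the moving basepoint $f$, exactly the cancellation data needed for the standard comparison of the maps induced on $\pi_1$ by homotopic maps to read $\psi_x \circ \phy^f = \psi_y$; here $\phy^f(\al) = f^{\inv} \sq \al \sq f$ is the change of basepoint along $f$, which in the abelian groupoid $\Pi_{(2)}(X)$ does not depend on the chosen representative path.

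I expect the only genuine work to be bookkeeping, in two places. First, one must make the reparametrization of Remark~\ref{rem:Globular} explicit enough to confirm that the gluing map $w$ underlying \eqref{eq:Gluing2Tracks} really implements concatenation of paths in $P_{\de a}$, and likewise for the identities and inverses. Second, one must track the cancellation paths and the (non-canonical) choices $P_x \simeq \Om$ precisely enough that \eqref{eq:CompatIso} commutes on the nose, not merely up to an inner automorphism of $\pi_2(X,0)$. The conceptual input --- that $\pi_2$ is abelian and that transport along paths is functorial --- is entirely standard.
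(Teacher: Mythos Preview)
Your proof is correct and takes a somewhat different route from the paper's. You identify $\Pi_{(2)}(X)$ with $\coprod_x \Pi_{(1)}(P_x)$ and build $\psi_a$ from the homotopy equivalence $q_a \colon P_{\de a} \to \Om$, $\ga \mapsto \ol a \cdot \ga$, followed by a canonical basepoint shift; the compatibility \eqref{eq:CompatIso} then comes from the standard comparison of $\pi_1$ under homotopic maps, using that $\pi_1(\Om)$ is abelian so that transport is path-independent. The paper instead stays entirely inside the cubical language and writes down $\psi_a^{-1}$ directly: given $\al \in \Aut(0)$, one glues $\al$ to $\id^{\sq}_a$ along a common edge (picture~\eqref{StrucIso}) to obtain an element of $\Aut(a)$. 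Your approach is more conceptual and makes the abelianness transparent---it is literally that of $\pi_2$---at the cost of the reparametrization bookkeeping you flag; the paper's construction is shorter, avoids the path-space detour, and yields an explicit cubical formula for $\psi_a^{-1}$ of the same shape as the structural isomorphisms appearing in the bigroupoid and double-groupoid comparisons in the appendix. For the second assertion the two arguments are essentially identical.
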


\begin{proof}
Let $a \in \Pi_{(2)}(X)_0$ be a left path in $X$. To any automorphism $\al \colon 0 \Ra 0$ in $\Pi_{(2)}(X)$, one can associate the well-defined left $2$-track indicated by the picture
\begin{equation} \label{StrucIso}
\xymatrix @C=5pc @R=5pc{
& & 0 \\
0 \ar@{-}[urr]^0 \urrtwocell<\omit>{<5>\al} \ar@{-}[r]^0 & 0 \ar@{-}[d]^0 & \\
\de a \ar@{-}[u]|-*=0@{>}^a \ar@{-}[r]|-*=0@{>}_a \urtwocell<\omit>{\quad \id^{\sq}_a} & 0 \ar@{-}[uur]_0 & \\
}
\end{equation}
which is a morphism $a \Ra a$. This assignment defines a map $\Aut_{\Pi_{(2)}(X)}(0) \to \Aut_{\Pi_{(2)}(X)}(a)$ and is readily seen to be a group isomorphism, whose inverse we denote $\psi_a$. One readily checks that the family $\psi_a$ is compatible with change-of-basepoint isomorphisms.

The set $\Comp \Pi_{(2)}(X)$ is the set of left paths in $X$ quotiented by the relation of being connected by a left $2$-track. The set $\Star \Pi_{(1)}(X)$ is the set of left paths in $X$ quotiented by the relation of path homotopy. But two left paths are path-homotopic if and only if they are connected by a left $2$-track.
\end{proof}

The bijection $\Comp \Pi_{(2)}(X) \simeq \Star \Pi_{(1)}(X)$ is induced by taking the homotopy class of left $1$-cubes. Consider the function $q \colon \Pi_{(2)}(X)_0 \to \Pi_{(1)}(X)_1$ which sends a left $1$-cube to its left $1$-track $q(a) = \{ a \}$. Then the image of $q$ is $\Star \Pi_{(1)}(X) \subseteq \Pi_{(1)}(X)_1$ and $q$ is constant on the components of $\Pi_{(2)}(X)_0$. We now introduce a definition based on those features of $\Pi_{(2)}(X)$.

\begin{defn} \label{def:2TrackGpd}
A \Def{$2$-track groupoid} $G = (G_{(1)}, G_{(2)})$ consists of:
\begin{itemize}
\item Pointed groupoids $G_{(1)}$ and $G_{(2)}$, with $G_{(2)}$ strictly abelian.
\item A pointed function $q \colon G_{(2)0} \surj \Star G_{(1)}$ which is constant on the components of $G_{(2)}$, and such that the induced function $q \colon \Comp G_{(2)} \ral{\simeq} \Star G_{(1)}$ is bijective.
\end{itemize}
We assign degrees to the following elements:
\[
\deg(x) = \begin{cases}
0 \text{ if } x \in G_{(1)0} \\
1 \text{ if } x \in G_{(2)0} \\
2 \text{ if } x \in G_{(2)1} \\
\end{cases}
\]
and we write $x \in G$ in each case.

A \Def{morphism of $2$-track groupoids} $F \colon G \to G'$ consists of a pair of pointed functors
\begin{align*}
&F_{(1)} \colon G_{(1)} \to G'_{(1)} \\
&F_{(2)} \colon G_{(2)} \to G'_{(2)} 
\end{align*}
satisfying the following two conditions.
\begin{enumerate}
\item (\textit{Structural isomorphisms}) For every object $a \in G_{(2)0}$, the diagram
\[
\xymatrix{
\Aut_{G_{(2)}}(a) \ar[d]_{\psi_a} \ar[r]^-{F_{(2)}} & \Aut_{G'_{(2)}}( F_{(2)} a) \ar[d]^{\psi_{F_{(2)} a}} \\
\Aut_{G_{(2)}}(0) \ar[r]_-{F_{(2)}} & \Aut_{G'_{(2)}}(0') \\
}
\]
commutes.
\item (\textit{Quotient functions}) The diagram
\[
\xymatrix{
G_{(2)0} \ar@{->>}[d]_q \ar[r]^-{F_{(2)}} & G'_{(2)0} \ar@{->>}[d]^{q'} \\
\Star G_{(1)} \ar[r]_-{F_{(1)}} & \Star G'_{(1)} \\
}
\]
commutes.
\end{enumerate}
Let $\DGpd$ denote the category of $2$-track groupoids.
\end{defn}

\begin{rem}
If $\al \colon a \Ra b$ is a left $2$-track in a space, then the left paths $a$ and $b$ have the same starting point $\de a = \de b$. This condition is encoded in the definition of $2$-track groupoid. Indeed, if $\al \colon a \Ra b$ is a morphism in $G_{(2)}$, then $a,b \in G_{(2)0}$ belong to the same component of $G_{(2)}$. Thus, we have $q(a) = q(b) \in \Star G_{(1)}$ and in particular $\de q(a) = \de q(b) \in G_{(1)0}$.
\end{rem}

\begin{defn}
The \Def{fundamental $2$-track groupoid} of a pointed space $X$ is
\[
\Pi_{(1,2)}(X) \dfn \left( \Pi_{(1)}(X), \Pi_{(2)}(X)\right).
\]
This construction defines a functor $\Pi_{(1,2)} \colon \Topp_* \to \DGpd$.
\end{defn}

\begin{rem} \label{Nul2TopEnriched}
The grading on $\Pi_{(1,2)}(X)$ defined in \ref{def:2TrackGpd} corresponds to the dimension of the cubes. For $x \in \Pi_{(1,2)}(X)$, we have $\deg(x) = 0$ if $x$ is a point in $X$, $\deg(x) = 1$ if $x$ is a left path in $X$, and $\deg(x) = 2$ if $x$ is a left $2$-track in $X$.  This $2$-graded set is the left $2$-cubical set $\Nul_2(X)$ \cite{Baues15}*{Definition 1.9}.
\end{rem}

\begin{defn} \label{def:HomotGpsDGpd}
Given a $2$-track groupoid $G$, its \Def{homotopy groups} are
\begin{align*}
&\pi_0 G = \Comp G_{(1)} \\
&\pi_1 G = \Aut_{G_{(1)}}(0) \\
&\pi_2 G = \Aut_{G_{(2)}}(0).
\end{align*}
Note that $\pi_0 G$ is a priori only a pointed set, $\pi_1 G$ is a group, and $\pi_2 G$ is an abelian group.

A morphism $F \colon G \to G'$ of $2$-track groupoids is a \Def{weak equivalence} if it induces an isomorphism on homotopy groups.
\end{defn}

\begin{rem} \label{rem:HomotGps}
Let $X$ be a topological space with basepoint $x_0 \in X$. Then the homotopy groups of its fundamental $2$-track groupoid $G = \Pi_{(1,2)}(X,x_0)$ are the homotopy groups of the space $\pi_i G = \pi_i(X,x_0)$ for $i = 0,1,2$.
\end{rem}

The following two lemmas are straightforward.

\begin{lem} \label{lem:ProdDGpd}
$\DGpd$ has products, given by $G \x G' = \left( G_{(1)} \x G'_{(1)}, G_{(2)} \x G'_{(2)} \right)$, and where the structural isomorphisms
\[
\psi_{(x,x')} \colon \Aut_{G_{(2)} \x G'_{(2)}} \left( (x,x') \right) \ral{\simeq} \Aut_{G_{(2)} \x G'_{(2)}} \left( (0,0') \right)
\]
are given by $\psi_x \x \psi_{x'}$, and the quotient function
\[
\xymatrix{
**[r] (G \x G')_{(2)0} = G_{(2)0} \x G'_{(2)0} \ar@{->>}[d]^{q \x q'} \\
**[r] \Star (G \x G')_{(1)} = \Star G_{(1)} \x \Star G'_{(1)}
}
\]
is the product of the quotient functions for $G$ and $G'$.
\end{lem}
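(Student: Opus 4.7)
The plan is to verify in two stages: first that $G \x G'$ as defined is genuinely an object of $\DGpd$, and second that it satisfies the universal property of a categorical product. Since $\Gpd$ is well-known to have products (computed levelwise in the usual way), I get pointed groupoids $G_{(1)} \x G'_{(1)}$ and $G_{(2)} \x G'_{(2)}$ for free, with basepoints $(0,0')$ and $(\id^{\sq}_0, \id^{\sq}_{0'})$.

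The main structural thing to check is that $G_{(2)} \x G'_{(2)}$ is strictly abelian with the claimed $\psi_{(x,x')} = \psi_x \x \psi_{x'}$. Abelianness is immediate from the identification $\Aut_{G_{(2)} \x G'_{(2)}}((x,x')) = \Aut_{G_{(2)}}(x) \x \Aut_{G'_{(2)}}(x')$. Each $\psi_x \x \psi_{x'}$ is an isomorphism onto $\Aut_{G_{(2)}}(0) \x \Aut_{G'_{(2)}}(0') = \Aut_{G_{(2)} \x G'_{(2)}}((0,0'))$, and compatibility with change of basepoint \eqref{eq:CompatIso} reduces to the same compatibility separately in each factor, because conjugation $\phy^{(f,f')}$ in the product is $\phy^f \x \phy^{f'}$. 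Next, I verify the quotient data: $\Star(G_{(1)} \x G'_{(1)}) = \Star G_{(1)} \x \Star G'_{(1)}$ since a morphism in a product groupoid lands at $(0,0')$ iff each component does, so $q \x q'$ has the correct codomain and is surjective. It is constant on components because a morphism $(\al, \al') \colon (a,a') \Ra (b,b')$ in $G_{(2)} \x G'_{(2)}$ forces $q(a) = q(b)$ and $q'(a') = q'(b')$. Finally, using $\Comp(G_{(2)} \x G'_{(2)}) = \Comp G_{(2)} \x \Comp G'_{(2)}$, the induced map is $q \x q'$ on components, a product of bijections, hence a bijection.

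For the universal property, given morphisms $F \colon H \to G$ and $F' \colon H \to G'$ in $\DGpd$, I define $\lan F, F' \ran$ levelwise by the universal property of products in $\Gpd$: $\lan F, F' \ran_{(i)} = \lan F_{(i)}, F'_{(i)} \ran$ for $i=1,2$. The pointedness of each functor is automatic from that of $F_{(i)}$ and $F'_{(i)}$. The structural isomorphism square and the quotient function square for $\lan F, F' \ran$ each decompose as the product of the corresponding squares for $F$ and for $F'$, so they commute. Uniqueness and the projection identities follow levelwise from the universal property of the product of groupoids. This establishes $G \x G'$ as a product in $\DGpd$.

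There is no serious obstacle; the one point that deserves care is the equality $\Comp(G_{(2)} \x G'_{(2)}) = \Comp G_{(2)} \x \Comp G'_{(2)}$, ensuring the induced quotient is bijective, which is a standard fact about coproducts of equivalence relations on products of sets and follows from the componentwise description of morphisms in the product groupoid. Everything else is componentwise bookkeeping.
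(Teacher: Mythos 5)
The paper offers no proof here; it simply declares both Lemmas \ref{lem:ProdDGpd} and \ref{lem:FundPreserveProd} straightforward. Your componentwise verification (checking that the given data satisfies Definition \ref{def:2TrackGpd} and then the universal property, reducing each to the corresponding fact in $\Gpd$) is correct and is exactly the argument the authors are tacitly invoking.
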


\begin{lem} \label{lem:FundPreserveProd}
The fundamental $2$-track groupoid preserves products:
\[
\Pi_{(1,2)}(X \x Y) \cong \Pi_{(1,2)}(X) \x \Pi_{(1,2)}(Y).
\]
\end{lem}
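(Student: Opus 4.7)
The strategy is to exploit the universal property of the product in $\Topp_*$: a (pointed) map $f \colon I^n \to X \x Y$ corresponds bijectively to a pair of maps $(f_X, f_Y) = (\proj_X \circ f, \proj_Y \circ f)$. Moreover, $f$ has value $0$ at some point of $\del I^n$ if and only if both components do, so $f$ is a left $n$-cube exactly when $f_X$ and $f_Y$ are left $n$-cubes. The same universal property applied to a homotopy $H \colon I^{n+1} \to X \x Y$ rel $\del I^n$ shows that such a homotopy is the data of a pair of homotopies in $X$ and $Y$. Hence taking pairs of (representatives of) tracks yields bijections on objects and morphisms
\[
\Pi_{(1)}(X \x Y)_i \cong \Pi_{(1)}(X)_i \x \Pi_{(1)}(Y)_i, \qquad \Pi_{(2)}(X \x Y)_i \cong \Pi_{(2)}(X)_i \x \Pi_{(2)}(Y)_i
\]
for $i = 0,1$, which are clearly pointed.

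Next I would verify that these bijections are compatible with the groupoid structures. The source and target maps $\de_0, \de_1$ are defined by restriction to faces of $I^n$, and restriction obviously commutes with the projections $\proj_X, \proj_Y$. Composition $\be \sq \al$ of left $2$-tracks is defined via a single reparametrization map $w \colon I^2 \to I^2 \cup_{I^1} I^2$ post-composed with $[\al, \be]$; since this construction is natural in the target space, projecting to $X$ or $Y$ yields the composition in $\Pi_{(2)}(X)$ or $\Pi_{(2)}(Y)$ respectively. The identity left $2$-track $\id^{\sq}_a$ and the inverse $\al^{\inv}$ are defined by precomposing $a$ or $\al$ with canonical maps of cubes, so these are also preserved. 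This gives isomorphisms of pointed groupoids $\Pi_{(1)}(X \x Y) \cong \Pi_{(1)}(X) \x \Pi_{(1)}(Y)$ and $\Pi_{(2)}(X \x Y) \cong \Pi_{(2)}(X) \x \Pi_{(2)}(Y)$.

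It remains to match the extra data of a $2$-track groupoid: the strictly abelian structure on $\Pi_{(2)}$ and the quotient function $q$ to $\Star \Pi_{(1)}$. The structural isomorphism $\psi_a$ is given by the explicit gluing pictured in \eqref{StrucIso}, built out of $a$, its $0$-face, and the $2$-track $\al$. Because this construction is again natural in the target space, $\psi_{(a,b)}$ in $\Pi_{(2)}(X \x Y)$ is identified with $\psi_a \x \psi_b$ under the bijections above, matching the product structure from Lemma~\ref{lem:ProdDGpd}. The quotient function $q$ sends a left $1$-cube to its path-homotopy class, and since path-homotopies in $X \x Y$ correspond to pairs of path-homotopies, $q$ in the product corresponds to $q \x q'$.

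The only mildly subtle point, and the one I would be most careful about, is that the reparametrization $w$ used in defining composition of left $2$-tracks is fixed once and for all: the same $w$ is used for computing $\be \sq \al$ regardless of the target space, which is what makes the composition isomorphism commute with projections on the nose rather than merely up to canonical homotopy. Once this naturality is established, assembling the pieces into the isomorphism of $2$-track groupoids claimed in the lemma, and checking that it is natural in $X$ and $Y$, is routine.
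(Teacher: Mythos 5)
Your proof is correct and takes the only sensible route; the paper itself labels this lemma (together with Lemma~\ref{lem:ProdDGpd}) as ``straightforward'' and gives no argument, and what you have written is exactly the careful unpacking that label presupposes: functoriality of every piece of the structure ($\de_i$, $\sq$ via the fixed reparametrization $w$, $\id^{\sq}$, $(-)^{\inv}$, the structural isomorphisms $\psi$, and the quotient $q$) in the target space, combined with the exponential law for the product.

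One small remark: the observation you flag as ``the mildly subtle point'' is not actually load-bearing. Even if the reparametrization $w$ were chosen differently for different targets, the composite $f_{\al,\be}\circ w$ would change only by a homotopy rel $\del I^2$, so the resulting $2$-track $\be\sq\al$ would be unchanged; what matters is that $w$ exists and that post-composition with $\proj_X$, $\proj_Y$ is applied after the same gluing. But fixing $w$ once and for all, as the paper does, certainly makes the commutation hold on the nose for representatives, which is the cleanest way to see it, so your instinct is sound even if slightly stronger than necessary.
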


\section{$2$-tracks in a topologically enriched category}

Throughout this section, let $\cat{C}$ be a category enriched in $(\Topp_*,\sm)$. Explicitly:
\begin{itemize}
\item For any objects $A$ and $B$ of $\cat{C}$, there is a morphism space $\cat{C}(A,B)$ with basepoint denoted $0 \in \cat{C}(A,B)$.
\item For any objects $A$, $B$, and $C$, there is a composition map
\[
\mu \colon \cat{C}(B,C) \x \cat{C}(A,B) \to \cat{C}(A,C)
\]
which is associative and unital.
\item Composition satisfies
\[
\mu(x,0) = \mu(0,y) = 0
\]
for all $x$ and $y$. 
\end{itemize}
We write $x \in \cat{C}$ if $x \in \cat{C}(A,B)$ for some objects $A$ and $B$. For $x,y \in \cat{C}$, we write $xy = \mu(x,y)$ when $x$ and $y$ are composable, i.e., when the target of $y$ is the source of $x$. From now on, whenever an expression such as $xy$ or $x \ot y$ appears, it is understood that $x$ and $y$ must be composable.

By Definition \ref{def:ProdCubes}, we have the $\ot$-composition $x \ot y$ for $x,y \in \Pi_{(1)} \cat{C}$ and $\deg(x) + \deg(y) \leq 1$. For $\deg(a) = \deg(b) = 1$, we have:
\begin{align*}
ab &= \left( a \ot \de_1 b \right) \sq \left( \de_0 a \ot b \right) \\
&= \left( \de_1 a \ot b \right) \sq \left( a \ot \de_0 b \right).
\end{align*}
This equation holds in any category enriched in groupoids, where $ab$ denotes the (pointwise) composition. Note that for paths $\tild{a}$ and $\tild{b}$ representing $a$ and $b$, the boundary of the $2$-cube $\tild{a} \ot \tild{b}$ corresponds to the equation.

Conversely, the $\ot$-composition in $\Pi_{(1)} \cat{C}$ is determined by the pointwise composition. For $\deg(x) = \deg(y) = 0$ and $\deg(a) = 1$, we have:
\begin{equation} \label{eq:TensorFromPointwise}
\begin{cases}
x \ot y = xy \\
x \ot a = \id^{\sq}_x a \\
a \ot x = a \id^{\sq}_x. \\
\end{cases}
\end{equation}

We now consider the $2$-track groupoids $\Pi_{(1,2)} \cat{C}(A,B)$ of morphism spaces in $\cat{C}$, and we write $x \in \Pi_{(1,2)} \cat{C}$ if $x \in \Pi_{(1,2)} \cat{C}(A,B)$ for some objects $A$, $B$ of $\cat{C}$. By Definition \ref{def:ProdCubes}, composition in $\cat{C}$ induces the $\ot$-composition:
\[
x \ot y \in \Pi_{(1,2)} \cat{C}
\]
if $x$ and $y$ satisfy $\deg(x) + \deg(y) \leq 2$. For $\deg(x) = \deg(y) = 1$, $x$ and $y$ are left paths, hence $x \ot y$ is well-defined. The $\ot$-composition satisfies:
\[
\deg(x \ot y) = \deg(x) + \deg(y).
\]

The $\ot$-composition is associative, since composition in $\cat{C}$ is associative. The identity elements $1_A \in \cat{C}(A,A)$ for $\cat{C}$ provide identity elements $1 = 1_A \in \Pi_{(1,2)} \cat{C}(A,A)$, with $\deg(1_A) = 0$, and $x \ot 1 = x = 1 \ot x$.

Let us describe the $\ot$-composition of left paths more explicitly. Given left paths $a$ and $b$, then $a \ot b$ is a $2$-track from $\de_0(a \ot b) = (\de a) \ot b$ to $\de_1(a \ot b) = a \ot (\de b)$, as illustrated here:
\[
\xymatrix @C=5pc @R=5pc {
\ar@{-}[d]_-{\de_0(a \ot b) = \de a \ot b} \ar@{-}[r]^0 & \ar@{-}[d]^0 \\
\ar@{-}[r]_-{\de_1(a \ot b) = a \ot \de b} \urtwocell<\omit>{\quad a \ot b} & \\
}
\]

\begin{defn}
The \Def{$2$-track algebra associated to $\cat{C}$}, denoted $\left( \Pi_{(1)} \cat{C}, \Pi_{(1,2)} \cat{C}, \sq, \ot \right)$, consists of the following data.
\begin{itemize}
\item $\Pi_{(1)} \cat{C}$ is the category enriched in pointed groupoids given by the fundamental groupoids $\left( \Pi_{(1)} \cat{C}(A,B), \sq \right)$ of morphism spaces in $\cat{C}$, along with the $\ot$-composition, which determines (and is determined by) the composition in $\Pi_{(1)} \cat{C}$. 
\item $\Pi_{(1,2)} \cat{C}$ is given by the collection of fundamental $2$-track groupoids $\left( \Pi_{(1,2)} \cat{C}(A,B), \sq \right)$ together with the $\ot$-composition $x \ot y$ for $x,y \in \Pi_{(1,2)} \cat{C}$ satisfying $\deg(x) + \deg(y) \leq 2$.
\end{itemize}
\end{defn}

\begin{prop} \label{2TrackEndpoint}
Let $x, \al, \be \in \Pi_{(1,2)} \cat{C}$ with $\deg(x) = 0$ and $\deg(\al) = \deg(\be) = 2$. Then the following equations hold:\[
\begin{cases}
x \ot (\be \sq \al) = (x \ot \be) \sq (x \ot \al) \\
(\be \sq \al) \ot x = (\be \ot x) \sq (\al \ot x). \\
\end{cases}
\]
\end{prop}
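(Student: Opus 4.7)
The plan is to recognize that $\ot$-composition with an element of degree $0$ is induced by a pointed continuous map on morphism spaces of $\cat{C}$, and hence by a functor between the corresponding groupoids of left $2$-tracks. Since functors preserve composition, both equations will follow immediately.

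In detail, fix objects $A, B, C$ of $\cat{C}$ and take $x \in \cat{C}(B,C)$, with $\al, \be$ composable left $2$-tracks in $\cat{C}(A,B)$ (so that $\de_0 \be = \de_1 \al$). Left-multiplication by $x$ defines a map
\[
L_x \dfn \mu(x, -) \colon \cat{C}(A,B) \to \cat{C}(A,C),
\]
which is continuous by enrichment and pointed since $\mu(x,0) = 0$. In particular, $L_x$ sends left cubes to left cubes and left cube homotopies rel boundary to the same, so by functoriality of $\Pi_{(2)}$ it induces a functor of groupoids
\[
(L_x)_* \colon \Pi_{(2)} \cat{C}(A,B) \to \Pi_{(2)} \cat{C}(A,C).
\]

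Next I would check that $x \ot \ga = (L_x)_*(\ga)$ for every left $2$-track $\ga$ in $\cat{C}(A,B)$. If $\tild\ga \colon I^2 \to \cat{C}(A,B)$ represents $\ga$, then by \eqref{TensorCubes} a representative of $x \ot \ga$ is the composite
\[
I^0 \x I^2 \ral{x \x \tild\ga} \cat{C}(B,C) \x \cat{C}(A,B) \ral{\mu} \cat{C}(A,C),
\]
which is precisely $L_x \circ \tild\ga$, a representative of $(L_x)_*(\ga)$. Applied to $\al$, $\be$, and $\be \sq \al$, this identification reduces the first equation to functoriality of $(L_x)_*$:
\[
x \ot (\be \sq \al) = (L_x)_*(\be \sq \al) = (L_x)_*(\be) \sq (L_x)_*(\al) = (x \ot \be) \sq (x \ot \al).
\]

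The second equation is proved identically, replacing $L_x$ by right-multiplication $R_x \dfn \mu(-, x)$, which is pointed thanks to the companion hypothesis $\mu(0, y) = 0$. There is no substantive obstacle: the only point one must flag is the pointedness condition on $\mu$, without which $L_x$ and $R_x$ would not preserve left cubes and the statement would not even type-check. Everything else is the tautology that the gluing-and-reparametrization definition of $\sq$ commutes with postcomposition by any pointed continuous map between hom-spaces.
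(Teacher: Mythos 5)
Your proof is correct and takes essentially the same approach as the paper: the paper's one-line proof simply invokes functoriality of $\Pi_{(2)}$ applied to $\mu(x,-)$ and $\mu(-,x)$, and your argument spells out exactly that, including the necessary identification $x \ot \ga = (L_x)_*(\ga)$ and the pointedness of $L_x$ and $R_x$ needed for them to preserve left cubes.
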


\begin{proof}
This follows from functoriality of $\Pi_{(2)}$ applied to the composition maps $\mu(x,-) \colon \cat{C}(A,B) \to \cat{C}(A,C)$ and $\mu(-,x) \colon \cat{C}(B,C) \to \cat{C}(A,C)$.
\end{proof}

\begin{prop} \label{Boundary3Cube}
Let $c, \al \in \Pi_{(1,2)} \cat{C}$ with $\deg(c) = 1$ and $\deg(\al) = 2$. Then the following equations hold:\[
\begin{cases}
\de_1 \al \ot c = \left( \al \ot \de c \right) \sq \left( \de_0 \al \ot c \right) \\
c \ot \de_0 \al = \left( c \ot \de_1 \al \right) \sq \left( \de c \ot \al \right). \\
\end{cases}
\]
\end{prop}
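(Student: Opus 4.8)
The plan is to deduce both displayed equations from a single geometric fact about $3$-cubes. First I would choose representatives $\tild{\al}\colon I^2\to\cat{C}(A,B)$ of $\al$ and $\tild{c}$ of $c$ for the relevant objects, so that the $\ot$-compositions in the statement are defined, and form the $3$-cube $F\dfn\tild{\al}\ot\tild{c}\colon I^3\to\cat{C}(A',B)$ (respectively $F'\dfn\tild{c}\ot\tild{\al}$ for the second equation). Since $\al$ is a left $2$-cube, three of the six faces of $F$ are trivial and the other three are precisely the left $2$-cubes appearing in the statement: writing the coordinates of $I^3$ as $(x_1,x_2,x_3)$ with $(x_1,x_2)$ the domain of $\tild{\al}$ and $x_3$ the domain of $\tild{c}$, one reads off from Definition~\ref{def:ProdCubes}, the formula $(a\ot b)(x',x'')=\mu(a(x'),b(x''))$, and the identities $\de_0\al=d_1^0\al$, $\de_1\al=d_2^0\al$ that $d_i^1 F=0$ for $i=1,2,3$ while
\[
d_1^0 F=\de_0\al\ot c,\qquad d_2^0 F=\de_1\al\ot c,\qquad d_3^0 F=\al\ot\de c,
\]
and symmetrically the nontrivial faces of $F'$ are $\de c\ot\al$, $c\ot\de_0\al$, $c\ot\de_1\al$.

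Both equations then become instances of the following claim, which is where the real content lies: \emph{for any map $F\colon I^3\to Y$ into a pointed space with $d_1^1 F=d_2^1 F=d_3^1 F=0$, one has $d_2^0 F=(d_3^0 F)\sq(d_1^0 F)$ in the groupoid $\Pi_{(2)}(Y)$.} Composability and the matching of source and target here are immediate identifications of restrictions of $F$ to edges of $I^3$ (both sides are left $2$-tracks from $x_3\mapsto F(0,0,x_3)$ to $x_1\mapsto F(x_1,0,0)$). This claim is the $2$-dimensional analogue of the relation $ab=(a\ot\de_1 b)\sq(\de_0 a\ot b)$ recalled earlier for $1$-tracks, which is itself the boundary relation of the $2$-cube $a\ot b$ with its two trivial faces.

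To prove the claim I would argue by reparametrization. The $2$-cubes $d_1^0 F$ and $d_3^0 F$ are literally the restrictions of $F$ to the faces $\{x_1=0\}$ and $\{x_3=0\}$ of $I^3$, which meet along the edge $\{x_1=x_3=0\}$ on which $\de_1(d_1^0 F)=\de_0(d_3^0 F)$; so their pasting into a map $I^2\cup_{I^1}I^2\to Y$ is the restriction of $F$ to the bent region $B\dfn\{x_1=0\}\cup\{x_3=0\}\subseteq\del I^3$, and by the definition of composition in $\Pi_{(2)}(Y)$ the track $(d_3^0 F)\sq(d_1^0 F)$ is the class of that pasting precomposed with the fixed reparametrization $w\colon I^2\to I^2\cup_{I^1}I^2$. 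On the other hand $d_2^0 F$ is the restriction of $F$ to the face $A\dfn\{x_2=0\}$. The two disks $A$ and $B$ in the $2$-sphere $\del I^3$ meet exactly along the arc $\{x_1=x_2=0\}\cup\{x_2=x_3=0\}$, where $F$ restricts to the left paths $x_3\mapsto F(0,0,x_3)$ and $x_1\mapsto F(x_1,0,0)$, while every other boundary edge of $A$ or of $B$ lies in some face $\{x_i=1\}$ and is collapsed to the basepoint by $F$. Consequently there is a homotopy $G\colon I^2\x I\to\del I^3$ from the parametrization $(x_1,x_3)\mapsto(x_1,0,x_3)$ of $A$ to the parametrization $I^2\ral{w}I^2\cup_{I^1}I^2\cong B$ of $B$, carrying $\del I^2\x I$ into that arc together with the $F$-trivial edges, and with $F\circ G$ constant in the $I$-direction on $\del I^2$; such a $G$ exists because, after collapsing the $F$-trivial edges, both $A$ and $B$ become the square $I^2$ with the same two adjacent edges marked. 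Then $F\circ G\colon I^2\x I\to Y$ is a homotopy rel $\del I^2$ from $d_2^0 F$ to $(d_3^0 F)\sq(d_1^0 F)$, and applying this to $F$ and to $F'$ yields the two equations.

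The hard part is the explicit construction of the reparametrizing homotopy $G$ and the check that it is compatible with the particular map $w$ used in the definition of $\sq$ in $\Pi_{(2)}(Y)$ — the same flavour of elementary but delicate piecewise-linear bookkeeping as in the construction of composition of left $2$-tracks. A variant that avoids choices is to write down a closed-form piecewise-linear formula for $G$ on $\del I^2$ in the spirit of the formula for $w$, extend it to $I^2\x I$ by coning off the vertex $(0,0,0)$, and then merely verify the boundary conditions.
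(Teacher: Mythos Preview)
Your proposal is correct and follows the same approach as the paper: form the left $3$-cube $\tild{\al}\ot c$ (respectively $c\ot\tild{\al}$), identify its three nontrivial faces, and read off the desired identity among left $2$-tracks from the boundary of that $3$-cube. The paper's proof is much terser --- it simply draws the $3$-cube and asserts that ``its boundary exhibits the equality of $2$-tracks: top face $\sq$ right face $=$ front face'' --- while you spell out the reparametrization homotopy that justifies this step; your general claim about maps $F\colon I^3\to Y$ with three trivial faces is exactly what the paper takes as evident from the picture.
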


\begin{proof}
Write $a = \de_0 \al$ and $b = \de_1 \al$, i.e., $\al$ is a left $2$-track from $a$ to $b$:
\[
\xymatrix  @C=5pc @R=5pc {
\ar@{-}[d]_{a} \ar@{-}[r]^0 & \ar@{-}[d]^0 \\
\ar@{-}[r]_{b} \urtwocell<\omit>{\al} & \\
}
\]
and note in particular $\de a = \de b$. Let $\tild{\al}$ be a left $2$-cube that represents $\al$ and consider the left $3$-cube $\tild{\al} \ot c$:
\[
\xymatrix  @C=5pc @R=5pc {
& \ar@{-}[rr] \drtwocell<\omit>{\hspace{1.5em} \tild{\al} \ot \de c} & & \ar@{-}[dd]^0 \\
\ar@{-}[dd] \ar@{-}[ur] & & \ar@{-}[ll]|-*=0@{>}_{b \ot \de c} \ar@{-}[dd]|-*=0@{>}^{\de a \ot c} \ar@{-}[ur]|-*=0@{>}^{a \ot \de c} \ddlltwocell<\omit>{\hspace{-1em} b \ot c} & \\
& & & \ultwocell<\omit>{\hspace{-1em} a \ot c} \\
\ar@{-}[rr]_0 & & \ar@{-}[ur]_0 & \\
}
\]
Its boundary exhibits the equality of $2$-tracks:
\begin{align*}
&\text{top face } \sq \text{ right face } = \text{ front face} \\
&\left( \al \ot \de c \right) \sq (a \ot c) = b \ot c \\
&\left( \al \ot \de c \right) \sq (\de_0 \al \ot c) = \de_1 \al \ot c.
\end{align*}

Likewise, for second equation, consider the left $3$-cube $c \ot \tild{\al}$:
\[
\xymatrix  @C=5pc @R=5pc {
& \ar@{-}[rr] \drtwocell<\omit>{\hspace{1em} c \ot b} & & \ar@{-}[dd]^0 \\
\ar@{-}[dd] \ar@{-}[ur] & & \ar@{-}[ll]|-*=0@{>}_{c \ot \de a} \ar@{-}[dd]|-*=0@{>}^{\de c \ot a} \ar@{-}[ur]|-*=0@{>}^{\de c \ot b} \ddlltwocell<\omit>{\hspace{-1em} c \ot a} & \\
& & & \ultwocell<\omit>{\hspace{-1.3em} \de c \ot \tild{\al}} \\
\ar@{-}[rr]_0 & & \ar@{-}[ur]_0 & \\
}
\]
Its boundary exhibits the equality of $2$-tracks:
\begin{align*}
&\text{top face } \sq \text{ right face } = \text{ front face} \\
&\left( c \ot b \right) \sq \left( \de c \ot \al \right) = c \ot a \\
&\left( c \ot \de_1 \al \right) \sq \left( \de c \ot \al \right) = c \ot \de_0 \al. \qedhere
\end{align*}
\end{proof}

\section{$2$-track algebras}

We now collect the structure found in $\left( \Pi_{(1)} \cat{C}, \Pi_{(1,2)} \cat{C}, \sq, \ot \right)$ into the following definition.

\begin{defn} \label{def:2TrackAlg}
A \Def{$2$-track algebra} $\cat{A} = \left( \cat{A}_{(1)}, \cat{A}_{(1,2)}, \sq, \ot \right)$ consists of the following data.

\begin{enumerate}
\item A category $\cat{A}_{(1)}$ enriched in pointed groupoids, with the $\ot$-composition determined by Equation \eqref{eq:TensorFromPointwise}.

\item A collection $\cat{A}_{(1,2)}$ of $2$-track groupoids $\left( \cat{A}_{(1,2)}(A,B), \sq \right)$ for all objects $A,B$ of $\cat{A}_{(1)}$, such that the first groupoid in $\cat{A}_{(1,2)}(A,B)$ is equal to the pointed groupoid $\cat{A}_{(1)}(A,B)$.

\item For $x,y \in \cat{A}_{(1,2)}$, the $\ot$-composition $x \ot y \in \cat{A}_{(1,2)}$ is defined. For $\deg(x) = 0$ and $\deg(y) = 1$, the following equations hold in $\cat{A}_{(1)}$:
\[
\begin{cases}
q(x \ot y) = x \ot q(y) \\
q(y \ot x) = q(y) \ot x. \\
\end{cases}
\]
\end{enumerate}

The following equations are required to hold.

\begin{enumerate}
\item (\textit{Associativity}) $\ot$ is associative: $(x \ot y) \ot z = x \ot (y \ot z)$.

\item (\textit{Units}) The units $1 \in \cat{A}_{(1)}$, with $\deg(1_A) = 0$, serve as units for $\ot$, i.e., satisfy $x \ot 1 = x = 1 \ot x$ for all $x \in \cat{A}_{(1,2)}$.

\item (\textit{Pointedness}) $\ot$ satisfies $x \ot 0 = 0$ and $0 \ot y = 0$.

\item For $x, y, \al, \be \in \cat{A}_{(1,2)}$ with $\deg(x) = \deg(y) = 0$ and $\deg(\al) = \deg(\be) = 2$, we have:
\[
\begin{cases}
\de_i(x \ot \al \ot y) = x \ot (\de_i \al) \ot y \: \text{ for } i =0,1 \\
x \ot (\be \sq \al) \ot y = (x \ot \be \ot y) \sq (x \ot \al \ot y) \\
\end{cases}
\]
\item For $a,b \in \cat{A}_{(1,2)}$ with $\deg(a) = \deg(b) = 1$, we have:
\[
\begin{cases}
\de_0 (a \ot b) = \de a \ot b \\
\de_1 (a \ot b) = a \ot \de b. \\
\end{cases}
\] 

\item For $c, \al \in \cat{A}_{(1,2)}$ with $\deg(c) = 1$ and $\deg(\al) = 2$, we have:
\[
\begin{cases}
\de_1 \al \ot c = \left( \al \ot \de c \right) \sq \left( \de_0 \al \ot c \right) \\
c \ot \de_0 \al = \left( c \ot \de_1 \al \right) \sq \left( \de c \ot \al \right). \\
\end{cases}
\]
\end{enumerate}
\end{defn}

\begin{defn}
A \Def{morphism of $2$-track algebras} $F \colon \cat{A} \to \cat{B}$ consists of the following.
\begin{enumerate}
\item A functor $F_{(1)} \colon \cat{A}_{(1)} \to \cat{B}_{(1)}$ enriched in pointed groupoids.

\item A collection $F_{(1,2)}$ of morphisms of $2$-track groupoids
\[
F_{(1,2)}(A,B) \colon \cat{A}_{(1,2)}(A,B) \to \cat{B}_{(1,2)}(FA,FB)  
\]
for all objects $A,B$ of $\cat{A}$, such that $F_{(1,2)}(A,B)$ restricted to the first groupoid in $\cat{A}_{(1,2)}(A,B)$ is the functor $F_{(1)}(A,B) \colon \cat{A}_{(1)}(A,B)\to \cat{B}_{(1)}(FA,FB)$.

\item (\textit{Compatibility with $\ot$}) $F$ commutes with $\ot$:
\[
F(x \ot y) = Fx \ot Fy.
\]
\end{enumerate}
Denote by $\ttA$ the category of $2$-track algebras.
\end{defn}

\begin{defn}
Let $\cat{A}$ be a $2$-track algebra. The underlying \Def{homotopy category} of $\cat{A}$ is the homotopy category of the underlying track category $\cat{A}_{(1)}$, denoted
\[
\pi_0 \cat{A} := \pi_0 \cat{A}_{(1)} = \Comp \cat{A}_{(1)}.
\] 
We say that $\cat{A}$ is \Def{based} on the category $\pi_0 \cat{A}$.
\end{defn}

\begin{defn}
A morphism of $2$-track algebras $F \colon \cat{A} \to \cat{B}$ is a \Def{weak equivalence} (or \emph{Dwyer-Kan equivalence}) if the following conditions hold:
\begin{enumerate}
\item For all objects $A$ and $B$ of $\cat{A}$, the morphism
\[
F_{(1,2)} \colon \cat{A}_{(1,2)}(A,B) \to \cat{B}_{(1,2)}(FA,FB)
\]
is a weak equivalence of $2$-track groupoids (Definition \ref{def:HomotGpsDGpd}).
\item The induced functor $\pi_0 F \colon \pi_0 \cat{A} \to \pi_0 \cat{B}$ is an equivalence of categories.
\end{enumerate}
\end{defn}

\section{Higher order chain complexes}

In this section, we construct tertiary chain complexes, extending the work of \cite{BauesJ06} on secondary chain complexes. We will follow the treatment therein.

\begin{defn}
A \Def{chain complex} $(A,d)$ in a pointed category $\ct{A}$ is a sequence of objects and morphisms
\[
\xymatrix{
\cdots \ar[r] & A_{n+1} \ar[r]^{d_{n}} & A_n \ar[r]^{d_{n-1}} & A_{n-1} \ar[r] & \cdots \\ 
}
\]
in $\ct{A}$ satisfying $d_{n-1} d_{n} = 0$ for all $n \in \Z$. The map $d$ is called the \emph{differential}.
 
A \Def{chain map} $f \colon (A,d) \to (A',d')$ between chain complexes is a sequence of morphisms $f_n \colon A_n \to A'_n$ commuting with the differentials:
\[
\xymatrix{
\cdots \ar[r] & A_{n+1} \ar[d]_{f_{n+1}} \ar[r]^{d_{n}} & A_n \ar[d]_{f_{n}} \ar[r]^{d_{n-1}} & A_{n-1} \ar[d]_{f_{n-1}} \ar[r] & \cdots \\ 
\cdots \ar[r] & A'_{n+1} \ar[r]^{d'_{n}} & A'_n \ar[r]^{d'_{n-1}} & A'_{n-1} \ar[r] & \cdots \\ 
}
\]
i.e., satisfying $f_n d_{n} = d'_{n} f_{n+1}$ for all $n \in \Z$.
\end{defn}

\begin{defn}
\cite{BauesJ06}*{Definition 2.6} Let $\ct{B}$ be a category enriched in pointed groupoids. A \Def{secondary pre-chain complex} $(A,d,\ga)$ in $\ct{B}$ is a diagram of the form:
\[
\xymatrix{
\cdots \ar[r] \rrlowertwocell<-12>_0 & A_{n+2} \ar[r]^{d_{n+1}} \rruppertwocell<12>^0{^\ga_n} & A_{n+1} \ar[r]^{d_{n}} \rrlowertwocell<-12>_0{\quad\ga_{n-1}} & A_{n} \ar[r]^{d_{n-1}} \rruppertwocell<12>^0{^} & A_{n-1} \ar[r] & \cdots \\
}
\]
More precisely, the data consists of a sequence of objects $A_n$ and maps $d_{n} \colon A_{n+1} \to A_n$, together with left tracks $\ga_n \colon d_n d_{n+1} \Ra 0$ for all $n \in \Z$.

$(A,d,\ga)$ is a \Def{secondary chain complex} if moreover for each $n \in \Z$, the tracks
\[
\xymatrix @C=3pc {
d_{n-1} d_n d_{n+1} \ar@{=>}[r]^-{d_{n-1} \ot \ga_n} & d_{n-1} 0 \ar@{=>}[r]^-{\id_0^{\sq}} & 0 \\
}
\]
and
\[
\xymatrix @C=3pc {
d_{n-1} d_n d_{n+1} \ar@{=>}[r]^-{\ga_{n-1} \ot d_{n+1}} & 0 d_{n+1} \ar@{=>}[r]^-{\id_0^{\sq}} & 0 \\
}
\]
coincide. In other words, the track
\[
\cat{O}(\ga_{n-1},\ga_n) := \left( \ga_{n-1} \ot d_{n+1} \right) \sq \left( d_{n-1} \ot \ga_n \right)^{\inv} \colon 0 \Ra 0
\]
in the groupoid $\ct{B}(A_{n+2},A_{n-1})$ is the identity track of $0$.

We say that the secondary pre-chain complex $(A,d,\ga)$ is \Def{based} on the chain complex $(A,\{d\})$ in the homotopy category $\pi_0 \ct{B}$. 
\end{defn}

\begin{rem}
One can show that the notion of secondary (pre-)chain complex in $\ct{B}$ coincides with the notion of \emph{$1^{\text{st}}$ order (pre-)chain complex} in $\Nul_1 \ct{B}$ described in \cite{Baues15}*{\S 4, c.f. Example 12.3}.
\end{rem}

\begin{defn}
A \Def{tertiary pre-chain complex} $(A,d,\de,\xi)$ in a $2$-track algebra $\cat{A}$ is a sequence of objects $A_n$ and maps $d_{n} \colon A_{n+1} \to A_n$ in the category $\cat{A}_{(1)0}$, together with left paths $\ga_n \colon d_n d_{n+1} \to 0$ in $\cat{A}_{(1,2)}$, as illustrated in the diagram
\newcommand{\shortar}[1]{\ar@{}[#1]^(.25){}="a"^(.75){}="b" \ar "a";"b"}
\newcommand{\shortdar}[1]{\ar@{}[#1]^(.25){}="a"^(.75){}="b" \ar@{=>} "a";"b"} \[
\xymatrix @C=3pc {
& & & & & & \\
\cdots \ar[r] \ar@/^3pc/[rr]^0 & A_{n+3} \shortar{u} \ar[r]|{d_{n+2}} \ar@/_3pc/[rr]_0 & A_{n+2} \shortar{d}^{\ga_{n+1}} \ar[r]|{d_{n+1}} \ar@/^3pc/[rr]^0 & A_{n+1} \shortar{u}_{\ga_n} \ar[r]|{d_{n}} \ar@/_3pc/[rr]_0 & A_{n} \shortar{d}^{\ga_{n-1}} \ar[r]|{d_{n-1}} \ar@/^3pc/[rr]^0 & A_{n-1} \shortar{u} \ar[r] & \cdots \\
& & & & & & \\
}
\]
along with left $2$-tracks $\xi_n \colon \ga_n \ot d_{n+2} \Ra d_n \ot \ga_{n+1}$ in $\cat{A}_{(1,2)}$, for all $n \in \Z$.

$(A,d,\ga,\xi)$ is a \Def{tertiary chain complex} if moreover for each $n \in \Z$, the left $2$-track:
\[
\xymatrix @C=3pc {
d_{n-1} \ot \ga_n \ot d_{n+2} \ar@{=>}[r]^-{d_{n-1} \ot \xi_n} & d_{n-1} d_n \ot \ga_{n+1} \ar@{=>}[r]^-{\ga_{n-1} \ot \ga_{n+1}} & \ga_{n-1} \ot d_{n+1} d_{n+2} \ar@{=>}[r]^-{\xi_{n-1} \ot d_{n+2}} & d_{n-1} \ot \ga_n \ot d_{n+2} \\
}
\]
is the identity of $d_{n-1} \ot \ga_n \ot d_{n+2}$ in the groupoid $\cat{A}_{(2)}(A_{n+3},A_{n-1})$. In other words, the element:
\begin{equation*}
\begin{split}
\cat{O}(\xi_{n-1},\xi_n) := \psi_{d_{n-1} \ot \ga_n \ot d_{n+2}} \left( \left( \xi_{n-1} \ot d_{n+2} \right) \sq \left( \ga_{n-1} \ot \ga_{n+1} \right) \sq \left( d_{n-1} \ot \xi_n \right) \right) \\
\in \pi_2 \cat{A}_{(1,2)}(A_{n+3},A_{n-1})
\end{split}
\end{equation*}
is trivial. Here, $\psi$ is the structural isomorphism in the $2$-track groupoid $\cat{A}_{(1,2)}(A_{n+3},A_{n-1})$, as in Definitions \ref{def:StrictlyAbel} and \ref{def:2TrackGpd}.

We say that the tertiary pre-chain complex $(A,d,\ga,\xi)$ is \Def{based} on the chain complex $(A,\{d\})$ in the homotopy category $\pi_0 \cat{A}$.
\end{defn}

\subsection*{Toda brackets of length 3 and 4}

Let $\CC$ be a category enriched in $(\Topp_*, \sm)$. Let $\pi_0 \CC$ be the category of path components of $\CC$ (applied to each mapping space) and let
\[
\xymatrix{
Y_0 & Y_1 \ar[l]_{y_1} & Y_2 \ar[l]_{y_2} & Y_3 \ar[l]_{y_3} & Y_4 \ar[l]_{y_4} \\
}
\]
be a diagram in $\pi_0 \CC$ satisfying $y_1 y_2 = 0$, $y_2 y_3 = 0$, and $y_3 y_4 = 0$. Choose maps $x_i$ in $\CC$ representing $y_i$. Then there exist left $1$-cubes $a$, $b$, $c$ as in the diagram
\[
\arrowobject{\dir{->}}
\xymatrix{
Y_0 & Y_1 \ar[l]^{x_1} & Y_2 \ar[l]^{x_2} \lllowertwocell<-10>_0{a} & Y_3 \ar[l]^{x_3} \lllowertwocell<10>^0{^b} & Y_4. \ar[l]^{x_4} \lllowertwocell<-10>_0{c} \\
}
\]

\begin{defn}
The \Def{Toda bracket} of length 3, denoted $\lan y_1, y_2, y_3 \ran \subseteq \pi_1 \CC(Y_3,Y_0)$, is the set of all elements in $\Aut(0) = \pi_1 \CC(Y_3,Y_0)$ of the form
\[
\cat{O}(a,b) := (a \ot x_3) \sq (x_1 \ot b)^{\inv}
\]
as above.

Assume now that we can choose left $2$-tracks $\al \colon a \ot x_3 \Ra x_1 \ot b$ and $\be \colon b \ot x_4 \Ra x_2 \ot c$ in $\Pi_{(1,2)} \cat{C}$. Then the composite of left $2$-tracks
\[
(\al \ot x_4) \sq (a \ot c) \sq (x_1 \ot \be)
\]
is an element of $\Aut(x_1 \ot b \ot x_4)$, to which we apply the structural isomorphism
\[
\psi_{x_1 \ot b \ot x_4} \colon \Aut(x_1 \ot b \ot x_4) \ral{\cong} \pi_2 \CC(Y_4,Y_0).
\]
The set of all such elements is the \Def{Toda bracket} of length 4, denoted $\lan y_1, y_2, y_3, y_4 \ran \subseteq \pi_2 \CC(Y_4,Y_0)$.

Note that the existence of $\al$, resp. $\be$, implies that the bracket $\lan y_1, y_2, y_3 \ran$, resp. $\lan y_2, y_3, y_4 \ran$ contains the zero element.
\end{defn}

\begin{rem}
For a secondary pre-chain complex $(A,d,\ga)$, we have
\[
\cat{O}(\ga_{n-1},\ga_n) \in \lan d_{n-1}, d_n, d_{n+1} \ran
\]
for every $n \in \Z$. Likewise, for a tertiary pre-chain complex $(A,d,\ga,\xi)$, we have
\[
\cat{O}(\xi_{n-1},\xi_n) \in \lan d_{n-1}, d_n, d_{n+1}, d_{n+2} \ran
\]
for every $n \in \Z$. However, the vanishing of these Toda brackets does not guarantee the existence of a tertiary chain complex based on the chain complex $(A,\{d\})$. In a secondary chain complex $(A,d,\ga)$, these Toda brackets vanish in a \emph{compatible} way, that is, the equations $\cat{O}(\ga_{n-1},\ga_n) = 0$ and $\cat{O}(\ga_{n},\ga_{n+1}) = 0$ involve the same left track $\ga_n \colon d_n d_{n+1} \Ra 0$.
\end{rem}

\section{The Adams differential $d_3$}

Let $\Spec$ denote the topologically enriched category of spectra and mapping spaces between them.  More precisely, start from a simplicial (or topological) model category of spectra, like that of Bousfield--Friedlander \cite{Bousfield78}*{\S 2}, or symmetric spectra or orthogonal spectra \cite{MandellM01}, and take $\Spec$ to be the full subcategory of fibrant-cofibrant objects; c.f. \cite{Baues15}*{Example 7.3}.

Let $H := H\F_p$ be the Eilenberg-MacLane spectrum for the prime $p$ and let $\steen = H^* H$ denote the mod $p$ Steenrod algebra. Consider the collection $\ct{EM}$ of all mod $p$ generalized Eilenberg-MacLane spectra that are bounded below and of finite type, i.e., degreewise finite products $A = \prod_i \Si^{n_i} H$ with $n_i \in \Z$ and $n_i \geq N$ for some integer $N$ for all $i$. Since the product is degreewise finite, the natural map $\bigvee_i \Si^{n_i} H \to \prod_i \Si^{n_i} H$ is an equivalence, so that the mod $p$ cohomology $H^* A$ is a free $\steen$-module. Moreover, the cohomology functor restricted to the full subcategory of $\Spec$ with objects $\ct{EM}$ yields an equivalence of categories in the diagram:
\[
\xymatrix{
\pi_0 \Spec^{\opp} \ar[r]^-{H^*} & \Mod_{\steen} \\
\pi_0 \ct{EM}^{\opp} \ar@{^{(}->}[u] \ar[r]^-{H^*}_{\cong} & \finMod_{\steen} \ar@{^{(}->}[u] \\
}
\]
where $\finMod_{\steen}$ denotes the full subcategory consisting of free $\steen$-modules which are bounded below and of finite type.

Given spectra $Y$ and $X$, consider the Adams spectral sequence:
\[
E_2^{s,t} = \Ext_{\steen}^{s,t} \left( H^*X, H^*Y \right) \Ra \left[ \Si^{t-s} Y, X^{\wedge}_p \right].
\]
Assume that $Y$ is a finite spectrum and $X$ is a connective spectrum of finite type, i.e., $X$ is equivalent to a CW-spectrum with finitely many cells in each dimension and no cells below a certain dimension. Then the mod $p$ cohomology $H^* X$ is an $\steen$-module which is bounded below and degreewise finitely generated (as an $\steen$-module, or equivalently, as an $\F_p$-vector space). Choose a free resolution of $H^* X$ as an $\steen$-module:
\[
\xymatrix{
\cdots \ar[r] & F_2 \ar[r]^-{e_1} & F_1 \ar[r]^-{e_0} & F_0 \ar[r]^-{\la} & H^* X \\
}
\]
where each $F_i$ is a free $\steen$-module of finite type and bounded below. This diagram can be realized as the cohomology of a diagram in the stable homotopy category $\pi_0 \Spec$:
\[
\xymatrix{
\cdots & A_2 \ar[l] & A_1 \ar[l]_-{d_1} & A_0 \ar[l]_-{d_0} & A_{-1} = X \ar[l]_-{\ep} \\
}
\]
with each $A_i$ in $\ct{EM}$ (for $i \geq 0$) and satisfying $H^* A_i \cong F_i$. We consider this diagram as a diagram in the opposite category $\pi_0 \Spec^{\opp}$ of the form:
\[
\xymatrix{
\cdots \ar[r] & A_2 \ar[r]^-{d_1} & A_1 \ar[r]^-{d_0} & A_0 \ar[r]^-{\ep} & A_{-1} = X \\
}
\]
Since $A_{\bu} \to X$ is an $\ct{EM}$-resolution of $X$ in $\pi_0 \Spec^{\opp}$, there exists a tertiary chain complex $(A,d,\ga,\xi)$ in $\Pi_{(1,2)} \Spec^{\opp}$ based on the resolution $A_{\bu}\to X$, by Theorem~\ref{thm:Resolution}.

\begin{nota}
Given spectra $X$ and $Y$, let $\ct{EM}\{X,Y\}$ denote the topologically enriched subcategory of $\Spec$ consisting of all spectra in $\ct{EM}$ and mapping spaces between them, along with the objects $X$ and $Y$, with the mapping spaces $\Spec(X,A)$ and $\Spec(Y,A)$ for all $A$ in $\ct{EM}$; c.f. \cite{BauesJ06}*{Remark 4.3} \cite{Baues15}*{Remark 7.5}. We consider the $2$-track algebra $\Pi_{(1,2)} \ct{EM}\{X,Y\}^{\opp}$, or any $2$-track algebra $\cat{A}$ weakly equivalent to it. In the following construction, everything will take place within $\Pi_{(1,2)} \ct{EM}\{X,Y\}^{\opp}$, but we will write $\Pi_{(1,2)} \Spec^{\opp}$ for notational convenience.
\end{nota}

Start with a class in the $E_2$-term:
\[
x \in E_2^{s,t} = \Ext_{\steen}^{s,t}(H^*X, H^*Y) = \Ext_{\steen}^{s,0}(H^*X, \Si^t H^*Y)
\]
represented by a cocycle $x' \colon F_s \to \Si^t H^*Y$, i.e., a map of $\steen$-modules satisfying $x' d_s = 0$. Realize $x'$ as the cohomology of a map $x'' \colon A_s \to \Si^t Y$ in $\Spec^{\opp}$. The equation $x' d_s = 0$ means that $x'' d_s$ is null-homotopic; let $\ga \colon x'' d_s \to 0$ be a null-homotopy. Consider the diagram in $\Spec^{\opp}$:
\[
\xymatrix{
\cdots \ar[r] & A_{s+2} \ar[r]^-{d_{s+1}} & A_{s+1} \ar[r]^-{d_{s}} & A_s \ar[d]^-{x''} \ar[r]^-{d_{s-1}} & A_{s-1} \ar[r] & \cdots \ar[r] & A_0 \ar[r]^-{\ep} & X \\
& & & \Si^t Y & & & \\
}
\]
Now consider the underlying secondary pre-chain complex in $\Pi_{(1)} \Spec^{\opp}$:
\begin{equation} \label{eq:SecondaryResol}
\xymatrix{
\cdots \ar[r] \rruppertwocell<12>^0{\omit} & A_{s+3} \ar[r]^-{d_{s+2}} \rrlowertwocell<-12>_0{\quad\ga_{s+1}} & A_{s+2} \ar[r]^-{d_{s+1}} \rruppertwocell<12>^0{^\ga_s} & A_{s+1} \ar[r]^-{d_{s}} \rrlowertwocell<-12>_0{\ga} & A_s \ar[r]^-{x''} & \Si^t Y \\
}
\end{equation}
in which the obstructions $\cat{O}(\ga_i,\ga_{i+1})$ are trivial, for $i \geq s$.

\begin{thm}
The obstruction $\cat{O}(\ga,\ga_s) \in \pi_1 \Spec^{\opp}(A_{s+2},\Si^t Y) = \pi_0 \Spec^{\opp}(A_{s+2},\Si^{t+1} Y)$ is a (co)cycle and does not depend on the choices, up to (co)boundaries, and thus defines an element:
\[
d_{(2)}(x) \in \Ext^{s+2,t+1}_{\steen}(H^* X, H^* Y).
\]
Moreover, this function
\[
d_{(2)} \colon \Ext^{s,t}_{\steen}(H^*X, H^*Y) \to \Ext^{s+2,t+1}_{\steen}(H^* X, H^* Y)
\]
is the Adams differential $d_2$.
\end{thm}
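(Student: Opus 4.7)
The plan is to observe that $\cat{O}(\ga,\ga_s)$ depends only on the underlying secondary pre-chain complex shown in \eqref{eq:SecondaryResol} — it does not involve the tertiary $2$-track data $\xi_\bu$. The theorem therefore reduces to the analogous result for secondary chain complexes proved in \cite{BauesJ06}, which identifies the secondary $\Ext$-differential with the Adams $d_2$. The tertiary data will only be used in the next section, to analyze $d_3$.

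First, I would verify that $\cat{O}(\ga,\ga_s)$ is a cocycle, i.e.\ that $\cat{O}(\ga,\ga_s) \ot d_{s+1}$ represents a coboundary in $\Hom_\steen^{*,*}(F_\bu, \Si^{t+1} H^* Y)$. Expanding
\[
\cat{O}(\ga,\ga_s) \ot d_{s+1} = \big( (\ga \ot d_{s+1}) \sq (x'' \ot \ga_s)^{\inv} \big) \ot d_{s+1}
\]
using Propositions~\ref{2TrackEndpoint} and \ref{Boundary3Cube}, and invoking the hypothesis $\cat{O}(\ga_s,\ga_{s+1}) = 0$ coming from the secondary chain complex structure underlying our tertiary resolution, rewrites the expression as a manifest coboundary.

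Second, I would establish independence from all the choices. Any two null-homotopies $\ga, \ga'$ of $x'' d_s$ differ by an element $\eta \in \Aut(x'' d_s) \cong \pi_1 \Spec^{\opp}(A_{s+1},\Si^t Y) = [A_{s+1}, \Si^{t+1} Y]$, and a direct calculation using Proposition~\ref{Boundary3Cube} gives
\[
\cat{O}(\ga',\ga_s) - \cat{O}(\ga,\ga_s) = \eta \ot d_{s+1},
\]
which is the coboundary $\de\eta$ in $\Hom_\steen(F_\bu,\Si^{t+1} H^* Y)$. Changes in the lift $x''$ of the cocycle $x'$, in the cocycle representative $x'$ of the class $x \in \Ext^{s,t}$, in the secondary tracks $\ga_i$, and in the $\ct{EM}$-resolution $A_\bu \to X$ itself are each handled by standard comparison arguments producing further coboundaries.

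Finally, to identify $d_{(2)}$ with the Adams differential $d_2$, I would invoke \cite{BauesJ06}*{Theorem 3.1.1}: the construction above is precisely the definition of the secondary $\Ext$-differential applied to the secondary pre-chain complex \eqref{eq:SecondaryResol}, and that theorem identifies this differential with $d_2$. Equivalently, since $\cat{O}(\ga,\ga_s) \in \lan x'', d_s, d_{s+1} \ran$, one can appeal directly to the classical Toda-bracket description of $d_2$ from \cite{Maunder64}. The main obstacle is the bookkeeping in the well-definedness steps: every change of choice must be shown to alter $\cat{O}(\ga,\ga_s)$ by a coboundary. However, Propositions~\ref{2TrackEndpoint} and \ref{Boundary3Cube} supply exactly the algebraic identities needed to translate the argument of \cite{BauesJ06} verbatim into the $2$-track algebra setting, so we can import that calculation rather than redo it.
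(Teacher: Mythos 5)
Your approach is essentially the paper's: the paper's proof simply notes that $\cat{O}(\ga,\ga_s)$ involves only the underlying secondary pre-chain complex and cites \cite{BauesJ06}*{Theorems 4.2 and 7.3} (equivalently the case $n=1$, $m=3$ of \cite{Baues15}*{Theorem 15.11}) together with the identification $\Ext^{i,j}_{\pi_0 \ct{EM}^{\opp}} \cong \Ext^{i,j}_{\steen}$. One small correction: the reference you want is \cite{BauesJ06}*{Theorems 4.2 and 7.3}, not Theorem 3.1.1 — the latter label belongs to \cite{BauesJ11} and is the strictification result, which plays no role here.
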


\begin{proof}
This is \cite{BauesJ06}*{Theorems 4.2 and 7.3}, or the case $n=1, m=3$ of \cite{Baues15}*{Theorem 15.11}.

Here we used the natural isomorphism:
\[
\Ext^{i,j}_{\pi_0 \ct{EM}^{\opp}}(H^* X, H^* Y) \cong \Ext^{i,j}_{\steen}(H^* X, H^* Y)
\]
where the left-hand side is defined as in Example \ref{ex:ExtGps}. Using the equivalence of categories $H^* \colon \pi_0 \ct{EM}^{\opp} \ral{\cong} \finMod_{\steen}$, this natural isomorphism follows from the natural isomorphisms:
\begin{align*}
\pi_0 \Spec^{\opp}(A_{s+2},\Si^{t+1} Y) &= \Hom_{\steen} \left( F_{s+2}, H^* \Si^{t+1} Y \right) \\
&= \Hom_{\steen} \left( F_{s+2}, \Si^{t+1} H^*Y \right).
\end{align*}
Cocycles modulo coboundaries in this group are precisely $\Ext^{s+2,t+1}_{\steen}(H^* X, H^* Y)$.
\end{proof}

Now assume that $d_2(x) = 0$ holds, so that $x$ survives to the $E_3$-term. Since the obstruction
\[
\cat{O}(\ga,\ga_s) = \left( \ga \ot d_{s+1} \right) \sq \left( x'' \ot \ga_s \right)^{\inv}
\]
vanishes, one can choose a left $2$-track $\xi \colon \ga \ot d_{s+1} \Ra x'' \ot \ga_s$, which makes \eqref{eq:SecondaryResol} into a tertiary pre-chain complex in $\Pi_{(1,2)} \Spec^{\opp}$. Since $(A,d,\ga,\xi)$ was a tertiary chain complex to begin with, the obstructions $\cat{O}(\xi_i,\xi_{i+1})$ are trivial, for $i \geq s$.

\begin{thm} \label{thm:d3ASS}
The obstruction $\cat{O}(\xi,\xi_s) \in \pi_2 \Spec^{\opp}(A_{s+3},\Si^t Y) = \pi_0 \Spec^{\opp}(A_{s+3},\Si^{t+2} Y)$ is a (co)cycle and does not depend on the choices up to (co)boundaries, and thus defines an element:
\[
d_{(3)}(x) \in E_3^{s+3,t+2}(X,Y).
\]
Moreover, this function
\[
d_{(3)} \colon E_3^{s,t}(X,Y) \to E_3^{s+3,t+2}(X,Y)
\]
is the Adams differential $d_3$.
\end{thm}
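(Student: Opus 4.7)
My plan is to reduce the statement to \cite{Baues15}*{Theorem 15.11} in the case $n=2, m=3$, in parallel with the preceding $d_2$ result which invoked the case $n=1, m=3$. The input to that theorem is an algebra of left $2$-cubical balls, which we extract from the $2$-track algebra $\Pi_{(1,2)} \ct{EM}\{X,Y\}^{\opp}$ via Theorem~\ref{2TrackAlgBalls}. Once this reduction is in place, the claims that $d_{(3)}$ is a cocycle, is well-defined up to coboundaries, and coincides with the Adams $d_3$ follow from the general theorem; what remains is to match our obstruction and our target group with those of loc.\ cit.

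For the target, apply the structural isomorphism $\psi$ in the $2$-track groupoid $\Pi_{(1,2)}\Spec^{\opp}(A_{s+3}, \Si^t Y)$ to land $\cat{O}(\xi,\xi_s)$ in $\pi_2 \Spec^{\opp}(A_{s+3}, \Si^t Y)$. Stability of spectra gives $\pi_2 \Spec^{\opp}(A_{s+3},\Si^t Y) \cong \pi_0 \Spec^{\opp}(A_{s+3}, \Si^{t+2} Y)$, and the mod-$p$ cohomology equivalence $H^* \colon \pi_0 \ct{EM}^{\opp} \ral{\cong} \finMod_{\steen}$ from the start of this section identifies the last group with $\Hom_{\steen}(F_{s+3}, \Si^{t+2} H^*Y)$. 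Cocycles modulo coboundaries there compute $\Ext^{s+3,t+2}_{\steen}(H^*X, H^*Y)$, whose subquotient by the image of $d_2$ and intersection with the kernel of $d_2$ is the relevant piece of $E_3$.

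For the obstruction itself, I would unwind the definitions using Propositions~\ref{2TrackEndpoint} and~\ref{Boundary3Cube}. The cocycle condition amounts to showing that $\cat{O}(\xi,\xi_s) \ot d_{s+3}$ is trivial in $\pi_0$; this follows from the tertiary chain complex axiom $\cat{O}(\xi_s, \xi_{s+1}) = 0$ of the ambient resolution $(A,d,\ga,\xi)$, since those propositions rewrite $\cat{O}(\xi,\xi_s) \ot d_{s+3}$ as an expression built from $\xi_{s+1}$ and $\ga_{s+2}$ whose class vanishes. Different choices of the lift $x''$ and of the null-homotopy $\ga$ modify $\cat{O}(\xi,\xi_s)$ by $\Ext$-coboundaries, while a different choice of $\xi$ (whose existence relied on $d_2(x) = 0$) modifies $\cat{O}(\xi,\xi_s)$ by an element in the image of $d_2$, which vanishes in $E_3$; in each case the same two propositions produce the required null-homotopy exhibiting the modification as a boundary.

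The main obstacle is the diagrammatic matching in the previous paragraph, namely verifying that our cubical obstruction $\cat{O}(\xi,\xi_s)$ agrees, under the reduction of Theorem~\ref{2TrackAlgBalls}, with the ball-theoretic obstruction cocycle produced by the Baues-Blanc machinery for left $2$-cubical balls. This is essentially the content of that reduction at the level of obstruction cochains, and it is the tertiary analogue of the secondary translation carried out in \cite{BauesJ06}, which is invoked implicitly in the $d_2$ theorem stated above.
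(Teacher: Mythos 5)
Your overall strategy matches the paper's: reduce to the Baues--Blanc framework by passing from the $2$-track algebra $\Pi_{(1,2)} \ct{EM}\{X,Y\}^{\opp}$ to an algebra of left $2$-cubical balls via Theorem~\ref{2TrackAlgBalls}, then invoke \cite{Baues15}*{Theorem 15.11}. However, you cite the case $n=2, m=3$, which has the wrong second parameter: the correct case is $n=2, m=4$. In the $d_2$ theorem the parameters were $n=1, m=3$, and passing from $d_2$ to $d_3$ increments both by one (the pattern is $m = n+2$), rather than holding $m$ fixed as your parallel assumes; taken literally, $m=3$ would invoke the wrong order of the machinery. With that corrected, the rest of your argument is sound: the identification of the target group via the structural isomorphism $\psi$, stability, and the equivalence $H^*$ matches the paper, and your sketch of the cocycle condition and independence-of-choices checks via Propositions~\ref{2TrackEndpoint} and~\ref{Boundary3Cube} and the tertiary chain complex axiom supplies detail that the paper elides, where it simply notes that $d_{(3)}$ corresponds to the $d_3$ of \cite{Baues15}*{Definition 15.8} and defers the verifications to the cited theorem.
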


\begin{proof}
This is the case $n=2, m=4$ of \cite{Baues15}*{Theorem 15.11}. More precisely, by Theorem \ref{2TrackAlgBalls}, the tertiary chain complex $(A,d,\ga,\xi)$ in $\Pi_{(1,2)} \Spec^{\opp}$ yields a $2^{\text{nd}}$ order chain complex in $\Nul_{2} \Spec^{\opp}$ based on the same $\ct{EM}$-resolution $A_{\bu}\to X$ in $\pi_0 \Spec^{\opp}$. The construction of $d_{(3)}$ above corresponds to the construction $d_3$ in \cite{Baues15}*{Definition 15.8}.
\end{proof}

\begin{rem}
The groups $E_3^{s,t}(X,Y)$ are an instance of the secondary $\Ext$ groups defined in \cite{BauesJ06}*{\S 4}. Likewise, the next term $E_4^{s,t}(X,Y) = \ker d_{(3)} / \im d_{(3)}$ is a higher order $\Ext$ group as defined in \cite{Baues15}*{\S 15}.
\end{rem}

\begin{thm} \label{thm:WeakEqClass}
A weak equivalence of $2$-track algebras induces an isomorphism of higher $\Ext$ groups, compatible with the differential $d_{(3)}$. More precisely, let $F \colon \cat{A} \to \cat{A}'$ be a weak equivalence between $2$-track algebras $\cat{A}$ and $\cat{A}'$ which are weakly equivalent to $\Pi_{(1,2)} \ct{EM}\{X,Y\}^{\opp}$. Then $F$ induces isomorphisms $E_{3,\cat{A}}^{s,t}(X,Y) \ral{\cong} E_{3,\cat{A}'}^{s,t}(FX,FY)$ making the diagram
\[
\xymatrix{
E_{3,\cat{A}}^{s,t}(X,Y) \ar[d]_{\cong} \ar[r]^-{d_{(3),\cat{A}}} & E_{3,\cat{A}}^{s+3,t+2}(X,Y) \ar[d]_{\cong} \\
E_{3,\cat{A}'}^{s,t}(FX,FY)  \ar[r]^-{d_{(3),\cat{A}'}} & E_{3,\cat{A}'}^{s+3,t+2}(FX,FY) \\
}
\]
commute. Here the additional subscript $\cat{A}$ or $\cat{A}'$ denotes the ambient $2$-track category in which the secondary $\Ext$ groups and the differential are defined.
\end{thm}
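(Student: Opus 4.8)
The plan is to show that $F$ induces a map of the entire tertiary chain complex data, hence a map on the chain complexes computing the higher $\Ext$ groups, and to use the fact that $F$ is a weak equivalence to conclude the induced maps are isomorphisms. First I would observe that the construction of $E_3^{s,t}(X,Y)$ and of $d_{(3)}$ in Theorem~\ref{thm:d3ASS} takes place entirely inside the $2$-track algebra $\Pi_{(1,2)} \ct{EM}\{X,Y\}^{\opp}$ (or any $2$-track algebra weakly equivalent to it), using only the operations $\sq$, $\ot$, the structural isomorphisms $\psi$, and the boundary maps $\de_i$. A morphism of $2$-track algebras $F$ commutes with all of this structure by definition (compatibility with $\ot$, with the structural isomorphisms via the morphism-of-$2$-track-groupoids conditions, and with $\sq$ and $\de_i$ since $F_{(1,2)}$ is a functor of $2$-track groupoids). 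Therefore $F$ sends a tertiary (pre-)chain complex $(A,d,\ga,\xi)$ in $\cat{A}$ based on an $\ct{EM}$-resolution to a tertiary (pre-)chain complex $(FA, Fd, F\ga, F\xi)$ in $\cat{A}'$, and it sends the obstruction element $\cat{O}(\xi,\xi_s)$ to $\cat{O}(F\xi, F\xi_s)$.

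Next I would make precise the claim that this is an isomorphism. The key point is that $E_3^{s,t}(X,Y)$ is, by the preceding remark, a secondary $\Ext$ group, and more fundamentally the whole construction of both $E_3$ and $d_{(3)}$ is invariant under weak equivalence by the corresponding result for algebras of left $n$-cubical balls in \cite{Baues15}. So the cleanest route is: by Theorem~\ref{2TrackAlgBalls} the $2$-track algebras $\cat{A}$ and $\cat{A}'$ determine algebras of left $2$-cubical balls, and the weak equivalence $F$ (which induces isomorphisms on $\pi_0, \pi_1, \pi_2$ of all mapping $2$-track groupoids and an equivalence on $\pi_0$) induces a weak equivalence of the associated algebras of left $2$-cubical balls in the sense of \cite{Baues15}. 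Then invoke the invariance statement from \cite{Baues15} (the analogue of \cite{BauesJ06}*{Theorem 4.2} at the tertiary level, i.e. the statement that weakly equivalent algebras of left $n$-cubical balls give isomorphic higher $\Ext$ groups compatibly with the differentials) to obtain the commuting square. One must also record that $F$ carries the chosen $\ct{EM}$-resolution $A_\bu \to X$ to an $\ct{EM}$-resolution $FA_\bu \to FX$ in $\pi_0 \cat{A}'$, which follows from $\pi_0 F$ being an equivalence of categories together with $F$ preserving the relevant exactness/freeness, so that $d_{(3),\cat{A}'}$ really is computed on $FA_\bu$.

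The main obstacle I expect is bookkeeping rather than conceptual: making sure that the isomorphism $E_{3,\cat{A}}^{s,t}(X,Y) \ral{\cong} E_{3,\cat{A}'}^{s+3,t+2}(FX,FY)$ is independent of the auxiliary choices (the cocycle representative $x'$, its realization $x''$, the null-homotopy $\ga$, the $2$-track $\xi$, and the tertiary chain complex $(A,d,\ga,\xi)$), and that the square commutes on the nose. For the first part one uses that $E_3^{s,t}$ is already well-defined (by Theorem~\ref{thm:d3ASS}) and that $F$, being compatible with all the operations, takes one system of choices for $\cat{A}$ to a valid system of choices for $\cat{A}'$; comparing two systems of choices on the $\cat{A}$ side and applying $F$ shows the $\cat{A}'$-side answers agree. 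For commutativity of the square, the point is simply that $d_{(3),\cat{A}'}(Fx)$ is represented by $\cat{O}(F\xi, F\xi_s) = F(\cat{O}(\xi,\xi_s))$, which represents $F(d_{(3),\cat{A}}(x))$; so the square commutes at the level of representatives and hence on cohomology. One final technical point to dispatch is that $F$ induces an isomorphism on the relevant $\Ext$ groups $\pi_0 \cat{A}(A_{s+j}, \Si^{*} Y)$ modulo (co)boundaries — this is exactly where the hypothesis that $F_{(1,2)}$ is a weak equivalence of $2$-track groupoids (isomorphism on $\pi_i$, $i=0,1,2$) is used, via the identifications of $\pi_0, \pi_1, \pi_2$ of mapping spaces with shifted $\Ext$ groups as in the proof of Theorem~\ref{thm:d3ASS}.
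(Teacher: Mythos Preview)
Your proposal is correct and follows essentially the same approach as the paper: the paper's proof simply cites the case $n=2$ of \cite{Baues15}*{Theorem 15.9} (or alternatively an adaptation of \cite{BauesJ06}*{Theorem 5.1}), which is precisely the invariance statement for algebras of left $n$-cubical balls you invoke after passing through Theorem~\ref{2TrackAlgBalls}. Your additional discussion of why $F$ preserves tertiary chain complexes and obstructions, and of the bookkeeping around choices, is a helpful unpacking of what that citation entails, but the underlying route is the same.
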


\begin{proof}
This follows from the case $n=2$ of \cite{Baues15}*{Theorem 15.9}, or an adaptation of the proof of \cite{BauesJ06}*{Theorem 5.1}.\end{proof}

\section{Higher order resolutions}

In this section, we specialize some results of \cite{Baues15} about higher order resolutions to the case $n=2$. We use the fact that a $2$-track algebra has an underlying algebra of left $2$-cubical balls, which is the topic of Section \ref{sec:AlgLeftCub}.

First, we recall some background on \emph{relative} homological algebra; more details can be found in \cite{BauesJ06}*{\S 1}.

\begin{defn}
Let $\ct{A}$ be an additive category and $\ct{a} \subseteq \ct{A}$ a full additive subcategory.

\begin{enumerate}
\item A chain complex $(A,d)$ is \Def{$\ct{a}$-exact} if for every object $X$ of $\ct{a}$ the chain complex $\Hom_{\ct{A}}(X,A_{\bu})$ is an exact sequence of abelian groups.
\item A chain map $f \colon (A,d) \to (A',d')$ is an \Def{$\ct{a}$-equivalence} if for every object $X$ of $\ct{a}$, the chain map $\Hom_{\ct{A}}(X,f)$ is a quasi-isomorphism.

\item For an object $A$ of $\ct{A}$, an \Def{$A$-augmented chain complex} $A_{\bu}^{\ep}$ is a chain complex of the form
\[
\xymatrix{
\cdots \ar[r] & A_{1} \ar[r]^{d_0} & A_0 \ar[r]^{\ep} & A \ar[r] & 0 \ar[r] & \cdots \\ 
}
\]
i.e., with $A_{-1} = A$ and $A_n = 0$ for $n < -1$. Such a complex can be viewed as a chain map $\ep \colon A_{\bu} \to A$ where $A$ is a chain complex concentrated in degree $0$. The map $\ep = d_{-1}$ is called the \Def{augmentation}.

\item An \Def{$\ct{a}$-resolution} of $A$ is an $A$-augmented chain complex $A_{\bu}^{\ep}$ which is $\ct{a}$-exact and such that for all $n \geq 0$, the object $A_n$ belongs to $\ct{a}$. In other words, an $\ct{a}$-resolution of $A$ is a chain complex $A_{\bu}$ in $\ct{a}$ together with an $\ct{a}$-equivalence $\ep \colon A_{\bu} \to A$.
\end{enumerate}
\end{defn}

\begin{ex}
Consider the category $\ct{A} = \ct{Mod}_R$ of $R$-modules for some ring $R$, and the subcategory $\ct{a}$ of free (or projective) $R$-modules. This recovers the usual homological algebra of $R$-modules.
\end{ex}

\begin{defn}
Let $\cat{A}$ be an abelian category and $F \colon \ct{A} \to \cat{A}$ an additive functor. The \Def{$\ct{a}$-relative left derived functors} of $F$ are the functors $L^{\ct{a}}_n F \colon \ct{A} \to \cat{A}$ for $n \geq 0$ defined by
\[
(L^{\ct{a}}_n F) A = H_n \left( F(A_{\bu}) \right) 
\]
where $A_{\bu} \to A$ is any $\ct{a}$-resolution of $A$. 

Likewise, if $F \colon \ct{A}^{\opp} \to \cat{A}$ is a contravariant additive functor, its \Def{$\ct{a}$-relative right derived functors} of $F$ are defined by
\[
(R_{\ct{a}}^n F) A = H^n \left( F(A_{\bu}) \right). 
\]
\end{defn}

\begin{ex} \label{ex:ExtGps}
The $\ct{a}$-relative $\Ext$ groups are given by
\[
\Ext_{\ct{a}}^n(A,B) := \left( R_{\ct{a}}^n \Hom_{\ct{A}}(-,B) \right)(A) = H^n \Hom_{\ct{A}}(A_{\bu},B).
\]
\end{ex}

\begin{prop}[Correction of $1$-tracks]
Let $\ct{B}$ be a category enriched in pointed groupoids, such that its homotopy category $\pi_0 \ct{B}$ is additive. Let $\ct{a} \subseteq \pi_0 \ct{B}$ be a full additive subcategory. Let $(A,d,\ga)$ be a secondary pre-chain complex in $\ct{B}$ based on an $\ct{a}$-resolution $A_{\bu} \to X$ of an object $X$ in $\pi_0 \ct{B}$. Then there exists a secondary chain complex $(A,d,\ga')$ in $\ct{B}$ with the same objects $A_i$ and differentials $d_i$. In particular $(A,d,\ga')$ is also based on the $\ct{a}$-resolution $A_{\bu} \to X$.
\end{prop}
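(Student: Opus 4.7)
The plan is to keep the objects $A_n$ and differentials $d_n$ fixed and modify only the null-tracks $\gamma_n$. Any replacement $\gamma'_n := \alpha_n \sq \gamma_n$ with $\alpha_n \in \Aut_{\ct{B}(A_{n+2}, A_n)}(0) = \pi_1 \ct{B}(A_{n+2}, A_n)$ is again a track from $d_n d_{n+1}$ to $0$, so admissible corrections are parametrized by a family $\{\alpha_n\}_{n \in \Z}$. Using functoriality of the $\ot$-composition (in particular the distributive laws $d_{n-1} \ot (\alpha_n \sq \gamma_n) = (d_{n-1} \ot \alpha_n) \sq (d_{n-1} \ot \gamma_n)$ and $(\alpha_{n-1} \sq \gamma_{n-1}) \ot d_{n+1} = (\alpha_{n-1} \ot d_{n+1}) \sq (\gamma_{n-1} \ot d_{n+1})$), a direct computation yields
\[
\cat{O}(\gamma'_{n-1}, \gamma'_n) = (\alpha_{n-1} \ot d_{n+1}) \sq \cat{O}(\gamma_{n-1}, \gamma_n) \sq (d_{n-1} \ot \alpha_n)^{\inv}
\]
in the abelian group $\pi_1 \ct{B}(A_{n+2}, A_{n-1})$. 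Written additively, this says that varying the $\alpha_n$ changes the obstructions by exactly the ``coboundary'' $(d_{n-1} \ot \alpha_n) - (\alpha_{n-1} \ot d_{n+1})$.

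I would construct the $\alpha_n$ by induction on $n$, starting at the bottom of the resolution, where $A_{n+2} = 0$ and there is nothing to correct. At the inductive step, having fixed $\alpha_{n-1}$ so that $\cat{O}(\gamma'_{n-2}, \gamma'_{n-1}) = 0$, set
\[
\beta_n := \cat{O}(\gamma_{n-1}, \gamma_n) + (\alpha_{n-1} \ot d_{n+1}) \in \pi_1 \ct{B}(A_{n+2}, A_{n-1}).
\]
A suitable $\alpha_n$ exists precisely when $\beta_n$ lies in the image of $(d_{n-1} \ot -) \colon \pi_1 \ct{B}(A_{n+2}, A_n) \to \pi_1 \ct{B}(A_{n+2}, A_{n-1})$. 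Treating $\pi_1 \ct{B}$ as a natural system (bimodule) over $\pi_0 \ct{B}$ and applying $\pi_1 \ct{B}(A_{n+2}, -)$ to the $\ct{a}$-resolution $A_\bullet \to X$ produces a cochain complex which, since $A_{n+2} \in \ct{a}$ and $A_\bullet \to X$ is $\ct{a}$-exact, is exact in positive degrees. Existence of $\alpha_n$ therefore reduces to the cocycle condition $(d_{n-2} \ot -)(\beta_n) = 0$, which follows from the inductive hypothesis $\cat{O}(\gamma'_{n-2}, \gamma'_{n-1}) = 0$ by a routine track manipulation using associativity and functoriality of $\ot$.

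The main obstacle is the interface between the track-level algebra and the relative homological algebra: one must verify both the cocycle condition for $\beta_n$ and the exactness statement for $\pi_1 \ct{B}(A_{n+2}, -)$ applied to the resolution, the latter resting on the natural-system structure of $\pi_1 \ct{B}$ over $\pi_0 \ct{B}$ that is implicit in the hypothesis that $\pi_0 \ct{B}$ is additive. Both of these points are established in the secondary setting of \cite{BauesJ06}*{Proposition 2.7}, so I would complete the argument by reducing to that result.
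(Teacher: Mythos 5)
Your proof proposal is correct and follows essentially the same approach as the paper, which simply cites \cite{BauesJ06}*{Lemma 2.14} (not Proposition 2.7, as you wrote) and, alternatively, the case $n=1$ of \cite{Baues15}*{Theorem 13.2}. Your sketch — parametrizing admissible corrections $\ga'_n = \al_n \sq \ga_n$ by $\al_n \in \pi_1\ct{B}(A_{n+2},A_n)$, observing that the obstructions change by the coboundary $(d_{n-1}\ot\al_n) - (\al_{n-1}\ot d_{n+1})$, and then using the cocycle property together with $\ct{a}$-exactness of $\pi_1\ct{B}(A_{n+2},-)$ applied to the resolution — is precisely the content of the cited lemma, so the only emendation needed is the reference number.
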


\begin{proof}
This follows from an adaptation of the proof of \cite{BauesJ06}*{Lemma 2.14}, or the case $n=1$ of \cite{Baues15}*{Theorem 13.2}.
\end{proof}

\begin{prop}[Correction of $2$-tracks]
Let $\cat{A}$ be a $2$-track algebra such that its homotopy category $\pi_0 \cat{A}$ is additive. Let $\ct{a} \subseteq \pi_0 \cat{A}$ be a full additive subcategory. Let $(A,d,\ga,\xi)$ be a tertiary pre-chain complex in $\cat{A}$ based on an $\ct{a}$-resolution $A_{\bu} \to X$ of an object $X$ in $\pi_0 \cat{A}$. Then there exists a tertiary chain complex $(A,d,\ga,\xi')$ in $\cat{A}$ with the same objects $A_i$, differentials $d_i$, and left paths $\ga_i$. In particular, $(A,d,\ga,\xi')$ is also based on the $\ct{a}$-resolution $A_{\bu} \to X$.
\end{prop}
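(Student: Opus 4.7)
I plan to mirror the correction argument for $1$-tracks (the preceding proposition) one level higher. First I would observe that, since the $2$-track groupoid $\cat{A}_{(1,2)}(A_{n+3}, A_n)$ is strictly abelian, any other left $2$-track $\xi_n'$ with the same source $\ga_n \ot d_{n+2}$ and target $d_n \ot \ga_{n+1}$ as $\xi_n$ must take the form $\xi_n' = \eta_n \sq \xi_n$ for a unique automorphism $\eta_n$ of $d_n \ot \ga_{n+1}$. Via the structural isomorphism $\psi$, this corresponds to a unique element $\tild\eta_n \in \pi_2 \cat{A}_{(1,2)}(A_{n+3}, A_n)$, so candidate corrections are parametrized by a family $\{\tild\eta_n\}_{n \in \Z}$ of such elements.

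Next I would compute the effect of a correction on the obstructions. Using Proposition~\ref{2TrackEndpoint} to distribute $\ot$ over $\sq$, together with the compatibility of $\psi$ with change of basepoint, a direct calculation yields a transformation law of the form
\[
\cat{O}(\xi_{n-1}', \xi_n') \;=\; \cat{O}(\xi_{n-1}, \xi_n) \;+\; d_{n-1} \ot \tild\eta_n \;-\; \tild\eta_{n-1} \ot d_{n+2}
\]
in the abelian group $\pi_2 \cat{A}_{(1,2)}(A_{n+3}, A_{n-1})$, the sign reflecting the groupoid inverse in the definition of $\cat{O}$. Thus killing every obstruction is equivalent to finding a family $\{\tild\eta_n\}$ whose ``coboundary'' $\tild\eta_{n-1} \ot d_{n+2} - d_{n-1} \ot \tild\eta_n$ equals $\{\cat{O}_n\}$.

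To produce such a family I would first verify that $\{\cat{O}_n\}$ is a cocycle for this coboundary operation. This is a coherence calculation: one expands the combination governed by $d_{n-2}, d_{n-1}, d_{n+2}, d_{n+3}$, and uses the associativity of $\ot$, Proposition~\ref{Boundary3Cube}, and the identities $d_{n-1} d_n = 0$ in $\pi_0 \cat{A}$, whereupon the intermediate pieces of $\xi_i$ and $\xi_i^{\inv}$ cancel in pairs. Once $\{\cat{O}_n\}$ is known to be a cocycle, the hypothesis that $A_\bu \to X$ is an $\ct{a}$-resolution with every $A_i \in \ct{a}$ provides the acyclicity of the cochain complex $\pi_2 \cat{A}_{(1,2)}(A_\bu, A_{n-1})$ needed to exhibit it as a coboundary, furnishing the desired $\tild\eta_n$. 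Setting $\xi_n' := \eta_n \sq \xi_n$ then produces the tertiary chain complex $(A, d, \ga, \xi')$ with the same $A_i$, $d_i$, and $\ga_i$ as the input.

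The main obstacle is the cocycle verification: tracking how the triple $\ot$-composition distributes over $\sq$ through three nested levels is mechanical but bookkeeping-heavy. This is precisely the content extracted from the $n=2$ case of \cite{Baues15}*{Theorem 13.2}, and can alternatively be obtained by lifting the secondary argument of \cite{BauesJ06}*{Lemma 2.14} one dimension up.
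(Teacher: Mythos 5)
The paper's own ``proof'' of this Proposition is a one-line citation to the case $n=2$ of Theorem~13.2 of Baues--Blanc, so there is no detailed argument in the paper against which to compare your sketch line by line. Your overall strategy (parametrize corrections $\xi_n' = \eta_n \sq \xi_n$ by elements $\tild\eta_n \in \pi_2$, compute the transformation law for the obstructions using Proposition~\ref{2TrackEndpoint} and the compatibility of the structural isomorphisms $\psi$ with conjugation, verify a cocycle identity, and solve for $\tild\eta_n$) is the standard obstruction-theoretic pattern in this circle of ideas, and the first two steps are essentially correct modulo sign conventions.

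The genuine gap is in the last step. You assert that ``the hypothesis that $A_\bu \to X$ is an $\ct{a}$-resolution with every $A_i \in \ct{a}$ provides the acyclicity of the cochain complex $\pi_2 \cat{A}_{(1,2)}(A_\bu, A_{n-1})$.'' Two problems: first, the variance is off — since $\xi_n$ is corrected by $d_{n-1} \ot \tild\eta_n$, i.e., postcomposition with $d_{n-1}$, the complex that needs to be exact is $\pi_2 \cat{A}_{(1,2)}(A_{n+3}, A_\bu)$ (source fixed in $\ct{a}$, target running along the resolution), or more precisely a total complex in both arguments. Second, and more seriously, the exactness of $\pi_2 \cat{A}_{(1,2)}(B, A_\bu)$ for $B \in \ct{a}$ does \emph{not} follow from $A_\bu$ being an $\ct{a}$-resolution: that hypothesis only yields exactness of $\Hom_{\pi_0 \cat{A}}(B, A_\bu)$, and $\pi_2 \cat{A}_{(1,2)}(-,-)$ is an a priori unrelated bimodule (the $D$ of Theorem~\ref{2TrackAlgBalls}). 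Bridging that gap requires a further structural hypothesis relating $D$ to $\Hom_{\pi_0 \cat{A}}$ (e.g., some representability, as happens concretely in $\Pi_{(1,2)}\ct{EM}\{X,Y\}^{\opp}$ where $\pi_2$ of a mapping space is again a $\Hom$-group after desuspension). That hypothesis, and the detailed double-complex bookkeeping you defer, are precisely what the cited Theorem~13.2 of Baues--Blanc supplies; as written, your sketch silently imports it rather than proving it.
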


\begin{proof}
This follows from the case $n=2$ of \cite{Baues15}*{Theorem 13.2}.
\end{proof}

\begin{thm}[Resolution Theorem] \label{thm:Resolution}
Let $\cat{A}$ be a $2$-track algebra such that its homotopy category $\pi_0 \cat{A}$ is additive. Let $\ct{a} \subseteq \pi_0 \cat{A}$ be a full additive subcategory. Let $A_{\bu} \to X$ be an $\ct{a}$-resolution in $\pi_0 \cat{A}$. Then there exists a tertiary chain complex in $\cat{A}$ based on the resolution $A_{\bu} \to X$.
\end{thm}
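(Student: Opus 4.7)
The plan is to build the data of a tertiary chain complex in stages, using the $\ct{a}$-resolution $A_\bu \to X$ as a blueprint and applying the two correction propositions above. The strategy is to first assemble a secondary chain complex at the level of $1$-tracks in $\cat{A}_{(1)}$, then lift to left paths and produce the left $2$-tracks using the structural properties of a $2$-track algebra, and finally apply tertiary correction.

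I begin by lifting each differential $d_n \colon A_{n+1} \to A_n$ from $\pi_0 \cat{A} = \Comp \cat{A}_{(1)}$ to a morphism in $\cat{A}_{(1)0}$. Since $d_n d_{n+1} = 0$ in $\pi_0 \cat{A}$, the composite $d_n d_{n+1}$ lies in the component of $0$ inside the pointed groupoid $\cat{A}_{(1)}(A_{n+2}, A_n)$, so I may choose a $1$-track $\bar\ga_n \colon d_n d_{n+1} \Ra 0$. This produces a secondary pre-chain complex $(A, d, \bar\ga)$ in the category $\cat{A}_{(1)}$ enriched in pointed groupoids, based on $A_\bu \to X$. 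Applying the Correction of $1$-tracks proposition, I replace each $\bar\ga_n$ with a new $1$-track, still denoted $\bar\ga_n$, so that $(A, d, \bar\ga)$ is a genuine secondary chain complex, meaning $\bar\ga_{n-1} \ot d_{n+1} = d_{n-1} \ot \bar\ga_n$ as $1$-tracks for all $n$.

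Next I use the surjection $q \colon \cat{A}_{(1,2)}(A_{n+2}, A_n)_{(2)0} \surj \Star \cat{A}_{(1)}(A_{n+2}, A_n)$ built into the definition of a $2$-track groupoid to choose, for each $n$, a left path $\ga_n$ with $q(\ga_n) = \bar\ga_n$. I now want to produce left $2$-tracks $\xi_n \colon \ga_n \ot d_{n+2} \Ra d_n \ot \ga_{n+1}$. Both left paths run from $d_n d_{n+1} d_{n+2}$ to $0$, and by the compatibility axioms $q(x \ot y) = x \ot q(y)$ and $q(y \ot x) = q(y) \ot x$ for $\deg(x) = 0$, their images under $q$ are $\bar\ga_n \ot d_{n+2}$ and $d_n \ot \bar\ga_{n+1}$ respectively. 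These agree by the secondary chain complex equation from the previous step, so $\ga_n \ot d_{n+2}$ and $d_n \ot \ga_{n+1}$ have the same image in $\Star \cat{A}_{(1)}(A_{n+3}, A_n)$. Because the induced map $\Comp \cat{A}_{(1,2)}(A_{n+3}, A_n)_{(2)} \to \Star \cat{A}_{(1)}(A_{n+3}, A_n)$ is a bijection, the two left paths lie in the same component of the strictly abelian groupoid $\cat{A}_{(1,2)}(A_{n+3}, A_n)_{(2)}$, so a left $2$-track $\xi_n$ between them exists; choose one.

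The result is a tertiary pre-chain complex $(A, d, \ga, \xi)$ in $\cat{A}$ based on $A_\bu \to X$. Applying the Correction of $2$-tracks proposition replaces the $\xi_n$'s with new left $2$-tracks $\xi'_n$ satisfying $\cat{O}(\xi'_{n-1}, \xi'_n) = 0$ for every $n$, producing the desired tertiary chain complex. The main obstacle is the step constructing $\xi_n$: it requires the vanishing of the obstruction in $\Star \cat{A}_{(1)}(A_{n+3}, A_n)$, which is bought by correcting the $\bar\ga_n$'s at the $1$-track level and by the compatibility of $q$ with $\ot$ axiomatized in the definition of a $2$-track algebra. Everything else is a careful matter of lifting and of iterated application of the two correction propositions.
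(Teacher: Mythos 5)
Your proof is correct and follows essentially the same route the paper intends via its citations to Baues--Blanc (Theorems 8.2 and 14.5 there): build the data one level at a time and correct at each stage using the two Correction propositions. The key step you supply --- that after Correction of $1$-tracks the $q$-compatibility axioms of a $2$-track algebra give $q(\ga_n \ot d_{n+2}) = \bar\ga_n \ot d_{n+2} = d_n \ot \bar\ga_{n+1} = q(d_n \ot \ga_{n+1})$, so the two left paths lie in the same component of $\cat{A}_{(1,2)}(A_{n+3},A_n)_{(2)}$ and a connecting left $2$-track $\xi_n$ exists --- is exactly what yields the tertiary pre-chain complex, after which Correction of $2$-tracks finishes.
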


\begin{proof}
This follows from the resolution theorems \cite{Baues15}*{Theorems 8.2 and 14.5}.
\end{proof}

\section{Algebras of left $2$-cubical balls} \label{sec:AlgLeftCub}

\begin{prop} \label{LeftCubical}
Every left cubical ball of dimension $2$ is equivalent to $C_k$ for some $k \geq 2$, where $C_k = B_1 \cup \cdots \cup B_k$ is the left cubical ball of dimension $2$ consisting of $k$ closed $2$-cells going cyclically around the vertex $0$, with one common $1$-cell $e_i$ between successive $2$-cells $B_i$ and $B_{i+1}$, where by convention $B_{k+1} := B_1$.

See Figure \ref{fig:LeftCubical}, which is taken from \cite{Baues15}*{Figure 3}.
\end{prop}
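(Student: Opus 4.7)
The plan is to analyze the cellular structure of a left $2$-cubical ball $B$ by looking at its cell structure near the distinguished basepoint vertex $0$, where (by the ``left'' condition) two adjacent faces of every $2$-cube meet. Recall that a left $2$-cube $\al \colon I^2 \to X$ satisfies $\al(t,1) = \al(1,t) = 0$, so the top edge, the right edge, and three of the four corners of every constituent $2$-cell are identified with the basepoint $0$. Accordingly, each $2$-cell contributes to the link of $0$ in $B$ a single arc, coming from its two ``free'' edges (the bottom and the left).

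First, I would recall from \cite{Baues15} the definition of left $2$-cubical ball: a finite cubical $2$-complex whose underlying space is homeomorphic to $D^2$, all of whose $2$-cells are left $2$-cubes compatibly attached at the distinguished vertex $0$. I would verify that $0$ must be an interior vertex of $B$: if $0$ lay on $\del B$, then one of the basepoint-edges meeting at $0$ would have no neighboring $2$-cell on one side, contradicting the disk condition together with the collapse of the top/right edges.

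Next, I would study the link $\mathrm{lk}(0,B) \subseteq B$, which by the disk condition is homeomorphic to $S^1$. Each constituent $2$-cell $B_i$ contributes exactly one arc $\mathrm{lk}(0,B_i)$ (the neighborhood near the corner $(1,1)$ where the two basepoint-edges meet), and these arcs glue end-to-end precisely along vertices of $\mathrm{lk}(0,B)$, which correspond to $1$-cells $e_i$ at $0$ shared between successive $2$-cells. Hence the $2$-cells admit a cyclic ordering $B_1, \ldots, B_k$ in which $B_i$ and $B_{i+1}$ share the $1$-cell $e_i$; this is exactly the combinatorial structure of $C_k$. The inequality $k \geq 2$ is forced because a single arc cannot close up into a circle.

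Finally, I would check that the remaining cellular data (the outer boundary $\del B$ assembled from the unshared edges, together with the non-$0$ vertices) is determined up to equivalence by $k$ alone, so that $B$ is equivalent to $C_k$. The main obstacle will be carrying out the link analysis rigorously within the conventions of \cite{Baues15}, and excluding pathological configurations such as a $2$-cell sharing more than one $1$-cell with a neighbor, a cell glued to itself at $0$, or ``folded'' configurations in which the link-arcs revisit a vertex; these are ruled out by the disk condition and the cubical structure, but this verification is the only nontrivial point in the argument.
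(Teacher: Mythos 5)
Your approach---compute the link of the distinguished vertex $0$ in $B$, observe that each $2$-cell contributes exactly one arc, and conclude that the arcs close up cyclically into $S^1$---is essentially the paper's argument. The paper phrases it combinatorially as a chain $B_1, B_2, \ldots$ obtained by following the shared interior $\del^0$-faces around $0$, which is precisely reading off the cyclic order of your link-arcs; and the pathologies you flag at the end are exactly what the paper dispatches in one line each from the disk condition (a $1$-cell cannot be a common face of three $2$-cells, and the chain exhausts all $2$-cells because they all contain $0$, which has an open $2$-disk neighborhood).

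One conceptual caution, though it does not damage your core argument: in the \emph{combinatorial} cubical ball $B$, the $\del^1$-faces of the $2$-cells are distinct $1$-cells making up $\del B \cong S^1$; they are not ``identified with the basepoint $0$'' as you write. That collapse occurs only under a map from $B$ into a pointed space $X$ (a left $2$-cubical ball \emph{in} $X$), where $\del B$ is sent to the basepoint of $X$, while the distinguished vertex $0$ of $B$, sitting at the opposite corner $(0,0)$ of each $2$-cell, is sent to the common starting point $\de a$---not the basepoint. Since Proposition \ref{LeftCubical} is purely combinatorial, no map into $X$ is in play. What you should cite directly is the definitional fact, which the paper states at the outset, that the $\del^1$-faces lie in $\del B$ while the $\del^0$-faces are interior and meet at $0$; this is also what makes your separate paragraph trying to ``verify that $0$ is an interior vertex'' circular---the open disk neighborhood of $0$ is part of the setup and the paper simply invokes it.
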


\begin{proof}
Let $B$ be a left cubical ball of dimension $2$. For each closed $2$-cell $B_i$, equipped with its homeomorphism $h_i \colon I^2 \ral{\cong} B_i$, the faces $\del^1_1 B_i$ and $\del^1_2 B_i$ are required to be $1$-cells of the boundary $\del B \cong S^1$, while the faces $\del^0_1 B_i$ and $\del^0_2 B_i$ are not in $\del B$, and therefore must be faces of some other $2$-cells. In other words, we have $\del^0_1 B_i = \del^0_1 B_j$ or $\del^0_1 B_i = \del^0_2 B_j$ for some other $2$-cell $B_j$, in fact a unique $B_j$, because $B$ is homeomorphic to a $2$-disk.

Pick any $2$-cell of $B$ and call it $B_1$. Then the face $e_1 := \del^0_2 B_1$ appears as a face of exactly one other $2$-cell, which we call $B_2$. The remaining face $e_2$ of $B_2$ appears as a face of exactly one other $2$-cell, which we call $B_3$. Repeating this process, we list distinct $2$-cells $B_1, \ldots, B_k$, and $B_{k+1}$ is one of the previously labeled $2$-cells. Then $B_{k+1}$ must be $B_1$, with $e_k = \del^0_1 B_1$, since a $1$-cell cannot appear as a common face of three $2$-cells. Finally, this process exhausts all $2$-cells, because all $2$-cells share the common vertex $0$, which has a neighborhood homeomorphic to an open $2$-disk.
\end{proof}

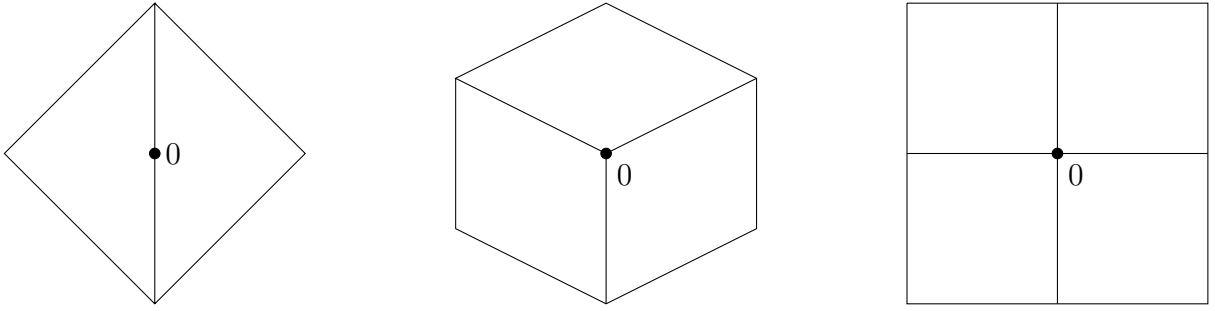
\begin{figure}[h]
\begin{tikzpicture}
\draw (0,-2) -- (0,2) -- (-2,0) -- (0,-2) -- (2,0) -- (0,2);
\draw[fill] (0,0) circle [radius=0.07];
\node [right] at (0,0) {0};
\draw (4,1) -- (4,-1) -- (6,-2) -- (8,-1) -- (8,1) -- (6,2) -- (4,1) -- (6,0) -- (6,-2);
\draw (6,0) -- (8,1);
\draw[fill] (6,0) circle [radius=0.07];
\node [below right] at (6,0) {0};
\draw (10,2) -- (10,-2) -- (14,-2) -- (14,2) -- (10,2);
\draw (10,0) -- (14,0);
\draw (12,-2) -- (12,2);
\draw[fill] (12,0) circle [radius=0.07];
\node [below right] at (12,0) {0};
\end{tikzpicture}
\caption{The left cubical balls $C_2$, $C_3$, and $C_4$.}
\label{fig:LeftCubical}
\end{figure}

\begin{prop} \label{Left2Cubical}
A left $2$-cubical ball (\cite{Baues15}*{Definition 10.1}) in a pointed space $X$ corresponds to a circular chain of composable left $2$-tracks:
\[
a = a_0 \ral{\al_1^{\ep_1}} a_1 \ral{\al_2^{\ep_2}} \cdots \to a_{k-1} \ral{\al_k^{\ep_k}} a_k = a
\]
where the sign $\ep_i = \pm 1$ is the orientation of the $2$-cells in the left cubical ball (\cite{Baues15}*{Definition 10.8}). Moreover, such an expression $(\al_1, \ldots, \al_k)$ of a left $2$-cubical ball is unique up to cyclic permutation of the $k$ left $2$-tracks $\al_i$. For example, $(\al_1, \al_2, \ldots, \al_k)$ and $(\al_2, \ldots, \al_k, \al_1)$ represent the same left $2$-cubical ball. See Figure \ref{fig:Left2Cubical}.
\end{prop}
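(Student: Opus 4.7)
The plan is to unpack the combinatorics of a left $2$-cubical ball using Proposition~\ref{LeftCubical}, extract a $2$-track from each $2$-cell, and check that consecutive $2$-tracks glue along a common edge into a circular chain. Uniqueness up to cyclic permutation will then be an immediate consequence of the indexing freedom already present in Proposition~\ref{LeftCubical}.

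First, given a left $2$-cubical ball $B \to X$, I would use Proposition~\ref{LeftCubical} to identify $B$ with $C_k = B_1 \cup \cdots \cup B_k$. For each $2$-cell $B_i$, the characteristic homeomorphism $h_i \colon I^2 \ral{\cong} B_i$ composed with the map to $X$ produces a $2$-cube $I^2 \to X$. Because the $1$-cell faces $\del^1_1 B_i$ and $\del^1_2 B_i$ lie on $\del B \cong S^1$ (hence are sent to the basepoint), this $2$-cube is a \emph{left} $2$-cube, and we define $\al_i \in \Pi_{(2)}(X)$ to be its left $2$-track. The orientation sign $\ep_i = \pm 1$ records whether $h_i$ is consistent with the chosen orientation of $D^2$, so the oriented contribution of $B_i$ is $\al_i^{\ep_i}$.

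Next, I would identify the common edges $e_i := \del^0_2 B_i = \del^0_? B_{i+1}$ as left $1$-cubes $a_i$ in $X$ via the restricted homeomorphisms, and compare them with the boundary faces of the adjacent $\al_i, \al_{i+1}$. The key bookkeeping step is that $\de_0 \al_i = d^0_1(B \circ h_i)$ and $\de_1 \al_i = d^0_2(B \circ h_i)$; depending on $\ep_i$, these become the source and target of $\al_i^{\ep_i}$ in the appropriate order. Thus setting $a_i = $ (the $1$-track of the edge between $B_i$ and $B_{i+1}$), the successive matching $e_i \subset B_i \cap B_{i+1}$ translates to $\al_i^{\ep_i} \colon a_{i-1} \Ra a_i$ and $\al_{i+1}^{\ep_{i+1}} \colon a_i \Ra a_{i+1}$ being composable, yielding a circular chain
\[
a_0 \ral{\al_1^{\ep_1}} a_1 \ral{\al_2^{\ep_2}} \cdots \ral{\al_k^{\ep_k}} a_k = a_0.
\]
Conversely, any such circular chain can be assembled into a left $2$-cubical ball of shape $C_k$ by gluing $k$ oriented copies of $I^2$ cyclically along matching edges, producing a map $C_k \to X$ sending $\del C_k$ and the central vertex to $0$.

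Finally, uniqueness up to cyclic permutation follows because the construction of $C_k$ in Proposition~\ref{LeftCubical} depended on an arbitrary choice of initial $2$-cell $B_1$. Relabelling $B_1$ as $B_j$ rotates the list $(\al_1,\ldots,\al_k)$ to $(\al_j,\ldots,\al_k,\al_1,\ldots,\al_{j-1})$ without changing the underlying left $2$-cubical ball; conversely, these are the only ambiguities, since once a starting $2$-cell and traversal direction are fixed, the labelling of the remaining cells and edges is forced by the local structure at the vertex $0$. The main subtlety I expect to handle carefully is the sign bookkeeping: matching $\de_0$ with $\de_1$ across $\ep_i = -1$ requires using $\al_i^{\inv}$ in place of $\al_i$, and verifying that the rule given in the statement (with $\al_i^{\ep_i}$) is precisely what the cubical orientation convention of \cite{Baues15}*{Definition 10.8} dictates.
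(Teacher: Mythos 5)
Your proposal follows essentially the same route as the paper: use Proposition~\ref{LeftCubical} to reduce to $C_k$, read each $2$-cell as a left $2$-track via its characteristic map, interpret the edge-gluing along $e_i \subset B_i \cap B_{i+1}$ as the composability condition, and derive uniqueness up to cyclic permutation from the freedom in choosing the initial $2$-cell. The sign issue you rightly flag as the main subtlety is resolved in the paper by the explicit rule $\ep_i = +1$ if $e_i = \del^0_2 B_i$ and $\ep_i = -1$ if $e_i = \del^0_1 B_i$ (up to a global sign), which makes $\al_i^{\ep_i} \colon \al_i\vert_{e_{i-1}} \Ra \al_i\vert_{e_i}$ and hence $a_i = \al_i\vert_{e_i}$.
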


\begin{proof}
By our convention for the $\sq$-composition, a left $2$-track $\al$ defines a morphism between left paths $\al \colon d^0_1 \al \Ra d^0_2 \al$. The gluing condition for a left $2$-cubical ball $(\al_1, \ldots, \al_k)$ based on a left cubical ball $B = B_1 \cup \cdots \cup B_k$ as in Proposition \ref{LeftCubical} is that the restrictions $\al_i \vert_{e_i}$ and $\al_{i+1} \vert_{e_i}$ agree on the common edge $e_i \subset B_i \cap B_{i+1}$. This is the composability condition for $\al_{i+1}^{\ep_{i+1}} \sq  \al_i^{\ep_i}$. Indeed, up to a global sign, the sign of $B_i$ is
\[
\ep_i = \begin{cases}
+1 &\text{if } e_i = \del^0_2 B_i \\
-1 &\text{if } e_i = \del^0_1 B_i \\
\end{cases}
\]
so that we have $\al_i^{\ep_i} \colon \al_i \vert_{e_{i-1}} \Ra \al_i \vert_{e_i}$ and we may take $a_i = \al_i \vert_{e_i}$.
\end{proof}

\begin{figure}[h]
\[
\xymatrix{
& a_k = a_0 \ar@{=>}[r]^-{\al_1^{\ep_1}} & a_1 \ar@{=>}[dr]^-{\al_2^{\ep_2}} & \\
a_{k-1} \ar@{=>}[ur]^-{\al_k^{\ep_k}} & & & a_2 \ar@{=>}[dl]^-{\al_3^{\ep_3}}  \\
& a_4 \ar@{=>}[ul]^{\cdots} & a_3 \ar@{=>}[l]^-{\al_4^{\ep_4}}  & \\
}
\]
\caption{A left $2$-cubical ball.}
\label{fig:Left2Cubical}
\end{figure}
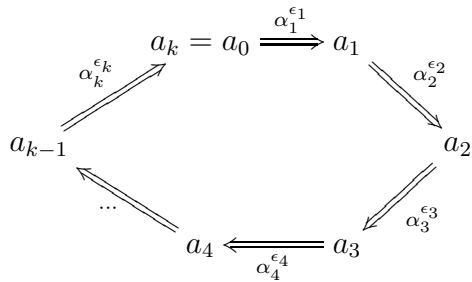

\begin{thm} \label{2TrackAlgBalls}
\begin{enumerate}
\item A $2$-track algebra $\cat{A}$ yields an algebra of left $2$-cubical balls (\cite{Baues15}*{Definition 11.1}) in the following way. Consider the system $\AlgCub (\cat{A}) := \left( (\cat{A}_{(1,2)},\ot), \pi_0 \cat{A}, D, \cat{O} \right)$, where:
\begin{itemize}
\item $(\cat{A}_{(1,2)},\ot)$ is the underlying $2$-graded category of $\TT$ (described in Definition \ref{def:2TrackAlg}).
\item $\pi_0 \cat{A}$ is the homotopy category of $\cat{A}$.\item $q \colon (\cat{A})^0 = \cat{A}_{(1)0} \surj \pi_0 \cat{A}$ is the canonical quotient functor.  
\item $D \colon \left( \pi_0 \cat{A} \right)^{\opp} \x \pi_0 \cat{A} \to \Ab$ is the functor defined by $D(A,B) = \pi_2 \cat{A}_{(1,2)}(A,B)$.
\item The obstruction operator $\cat{O}$ is obtained by concatenating the corresponding left $2$-tracks and using the structural isomorphisms $\psi$ of the mapping $2$-track groupoid:
\[
\OO_B(\al_1, \al_2, \ldots, \al_k) = \psi_{a} \left( \al_k^{\ep_k} \sq \cdots \sq \al_2^{\ep_2} \sq \al_1^{\ep_1} \right) \in \Aut_{\cat{A}_{(2)}(A,B)}(0) = \pi_2 \cat{A}_{(1,2)}(A,B)
\]
where we denoted $a = \de_0 \al_1 = \de_1 \al_k$.
\end{itemize}
\item Given a category $\cat{C}$ enriched in pointed spaces, $\AlgCub \left( \Pi_{(1,2)} \cat{C} \right)$ is the algebra of left $2$-cubical balls
\[
\left( \Nul_2 \cat{C}, \pi_0 \cat{C}, \pi_2 \cat{C}(-,-), \cat{O} \right)
\]
described in \cite{Baues15}*{\S 11}.\item The construction $\AlgCub$ sends a tertiary pre-chain complex $(A,d,\de,\xi)$ in $\cat{A}$ to a $2^{\text{nd}}$ order pre-chain complex in $\AlgCub(\cat{A})$, in the sense of \cite{Baues15}*{Definition 11.4}. Moreover, $(A,d,\de,\xi)$ is a tertiary chain complex if and only if the corresponding $2^{\text{nd}}$ order pre-chain complex in $\AlgCub(\cat{A})$ is a $2^{\text{nd}}$ order chain complex.
\end{enumerate}

\end{thm}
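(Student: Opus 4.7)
The plan is to verify the three assertions in turn, each by unpacking the relevant definition and applying the axioms of a $2$-track algebra together with the combinatorial description of left $2$-cubical balls supplied by Proposition~\ref{Left2Cubical} and Proposition~\ref{Left2Cubical}'s dependent Proposition on circular chains.

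For part (1), I would show that the system $\AlgCub(\cat{A})$ satisfies the axioms of an algebra of left $2$-cubical balls as in \cite{Baues15}*{Definition 11.1}. The first thing to check is that the obstruction operator $\cat{O}_B$ is well-defined on equivalence classes of left $2$-cubical balls, which by Proposition~\ref{Left2Cubical} reduces to invariance under cyclic permutation. The composites
\[
\al_k^{\ep_k} \sq \cdots \sq \al_1^{\ep_1} \in \Aut(a_0) \quad \text{and} \quad \al_1^{\ep_1} \sq \al_k^{\ep_k} \sq \cdots \sq \al_2^{\ep_2} \in \Aut(a_1)
\]
live at different basepoints but are related by conjugation with $\al_1^{\ep_1} \colon a_0 \to a_1$; the compatibility diagram~\eqref{eq:CompatIso} for the structural isomorphisms $\psi$ then forces their images in $\Aut(0) = \pi_2 \cat{A}_{(1,2)}(A,B)$ to coincide. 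Next I would check the remaining axioms: degeneracy (a ball containing an identity $2$-track gives trivial obstruction) is immediate from $\id^{\sq}_a$ being the identity in $\cat{A}_{(2)}$, while multiplicativity with respect to $\ot$ requires the exchange-type laws of Definition~\ref{def:2TrackAlg} together with Propositions~\ref{2TrackEndpoint} and~\ref{Boundary3Cube}, which describe how $\ot$ interacts with $\sq$, sources, and targets.

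Part (2) is essentially definitional. By Remark~\ref{Nul2TopEnriched}, the underlying $2$-graded set of $\Pi_{(1,2)} \cat{C}$ coincides with $\Nul_2 \cat{C}$; the homotopy category of the underlying track category is $\pi_0 \cat{C}$; and by Remark~\ref{rem:HomotGps} the abelian groups $\pi_2 \Pi_{(1,2)} \cat{C}(A,B)$ coincide with $\pi_2 \cat{C}(A,B)$. The obstruction operator in $\AlgCub(\Pi_{(1,2)} \cat{C})$ is formed by $\sq$-composing left $2$-tracks; since $\sq$ was constructed in~\eqref{eq:Gluing2Tracks} by gluing representative cubes along shared edges, this agrees, up to a reparametrization of the $2$-disk, with the concatenation procedure of \cite{Baues15}*{\S 11}.

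For part (3), a tertiary pre-chain complex provides, for each $n$, three composable left $2$-tracks
\[
\xi_{n-1} \ot d_{n+2}, \qquad \ga_{n-1} \ot \ga_{n+1}, \qquad d_{n-1} \ot \xi_n
\]
in $\cat{A}_{(2)}(A_{n+3}, A_{n-1})$ whose $\sq$-composite is an automorphism of $d_{n-1} \ot \ga_n \ot d_{n+2}$; by Proposition~\ref{Left2Cubical} this is exactly the data of a left $2$-cubical ball of shape $C_3$. Applying $\psi$ recovers on one hand the element $\cat{O}(\xi_{n-1}, \xi_n)$ defined in the text, and on the other hand the obstruction $\cat{O}_{C_3}$ of the associated ball under $\AlgCub$, so vanishing of the former for all $n$ is equivalent to the corresponding pre-chain complex in $\AlgCub(\cat{A})$ being a $2^{\text{nd}}$ order chain complex in the sense of \cite{Baues15}*{Definition 11.4}. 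The main obstacle throughout is the multiplicativity bookkeeping in part (1): expressing the $\ot$-composite of two $\sq$-composites of left $2$-tracks in terms of basic $\ot$-products requires a careful application of the boundary-of-a-$3$-cube identities in Proposition~\ref{Boundary3Cube}, in tandem with the compatibility of $\psi$ with $\ot$, to match the precise form of the multiplicativity axiom in \cite{Baues15}.
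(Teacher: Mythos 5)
Your proposal is correct and follows essentially the same route as the paper: the key step in part (1) is well-definedness of $\cat{O}_B$, reduced via Proposition~\ref{Left2Cubical} to invariance under cyclic permutation, handled by conjugating the composite and invoking the compatibility diagram~\eqref{eq:CompatIso} for the structural isomorphisms $\psi$ of a strictly abelian groupoid. The paper's proof is notably terse beyond that point (it dismisses the remaining axioms of \cite{Baues15}*{Definition 11.1} and parts (2) and (3) as straightforward), whereas you helpfully sketch the multiplicativity bookkeeping via Propositions~\ref{2TrackEndpoint} and~\ref{Boundary3Cube} and the $C_3$-ball interpretation of the tertiary obstruction; this is a reasonable elaboration of the same argument rather than a different method.
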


\begin{proof}
Let us check that the obstruction operator $\cat{O}$ is well-defined. By \ref{Left2Cubical}, the only ambiguity is the starting left $1$-cube $a_i$ in the composition. Two such compositions are conjugate in the groupoid $\cat{A}_{(2)}(A,B)$:
\begin{align*}
&\al_{i-1}^{\ep_{i-1}} \sq \cdots \sq \al_2^{\ep_2} \sq \al_1^{\ep_1} \sq \al_k^{\ep_k} \sq \cdots \sq \al_{i+1}^{\ep_{i+1}} \sq \al_i^{\ep_i} \\
= &\left( \al_{i-1}^{\ep_{i-1}} \sq \cdots \sq \al_1^{\ep_1} \right) \sq \al_k^{\ep_k} \sq \cdots \sq \al_{i+1}^{\ep_{i+1}} \sq \al_i^{\ep_i} \sq \cdots \sq \al_1^{\ep_1} \sq \left( \al_{i-1}^{\ep_{i-1}} \sq \cdots \sq \al_1^{\ep_1} \right)^{\inv} \\
= &\be^{\inv} \sq \al_k^{\ep_k} \sq \cdots \sq \al_1^{\ep_1} \sq \be
\end{align*}
with $\be = \left( \al_{i-1}^{\ep_{i-1}} \sq \cdots \sq \al_1^{\ep_1} \right)^{\inv} \colon a_i \Ra a_0$.
Since $\cat{A}_{(2)}(A,B)$ is a strictly abelian groupoid, we have the commutative diagram:
\[
\xymatrix{
\Aut_{}(a_0) \ar[dr]_{\psi_{a_0}} \ar[r]^-{\phy^{\be}} & \Aut_{}(a_i) \ar[d]^{\psi_{a_i}} \\
& \Aut(0) \\
}
\]
so that $\cat{O}_B(\al_1, \ldots, \al_k)$ is well-defined.

The remaining properties listed in \cite{Baues15}*{Definition 11.1} are straightforward verifications.
\end{proof}

\appendix

\section{Models for homotopy $2$-types} \label{sec:Models2Types}

Recall that the left $n$-cubical set $\Nul_n(X)$ of a pointed space $X$ depends only on the $n$-type $P_n X$ of $X$ \cite{Baues15}*{\S 1}. In particular the fundamental $2$-track groupoid $\Pi_{(1,2)}(X)$ depends only on the $2$-type $P_2 X$ of $X$. There are various algebraic models for homotopy $2$-types in the literature, using $2$-dimensional categorical structures. Let us mention the weak $2$-groupoids of \cite{Tamsamani99}, the bigroupoids of \cite{Hardie01}, the double groupoids of \cite{Brown02}, the two-typical double groupoids of \cite{Blanc11}, and the double groupoids with filling condition of \cite{Cegarra12}.

In contrast, $2$-track groupoids are \emph{not} models for homotopy $2$-types, not even of connected homotopy $2$-types. In the application we are pursuing, the functor $\Pi_{(1,2)}$ will be applied to topological abelian groups, hence products of Eilenberg-MacLane spaces. We are \emph{not} trying to encode the homotopy $2$-type of the Eilenberg-MacLane mapping theory, but rather as little information as needed in order to compute the Adams differential $d_3$.

The fundamental $2$-track groupoid $\Pi_{(1,2)}(X)$ encodes the $1$-type of $X$, via the fundamental groupoid $\Pi_{(1)}(X)$. Moreover, as noted in Remark~\ref{rem:HomotGps}, it also encodes the homotopy group $\pi_2(X)$. However, it fails to encode the $\pi_1(X)$-action on $\pi_2(X)$, as we will show below.

\subsection{Connected $2$-track groupoids}

Recall that a category $\cat{C}$ is called \emph{skeletal} if any isomorphic objects are equal. A \emph{skeleton} of $\CC$ is a full subcategory on a collection consisting of one representative object in each isomorphism class of objects of $\CC$. Every groupoid is equivalent to a disjoint union of groups, that is, a coproduct of single-object groupoids. The inclusion $\sk G \ral{\simeq} G$ of a skeleton of $G$ provides such an equivalence. A similar construction yields the following statement for $2$-track groupoids.

\begin{lem}
Let $G = \left( G_{(1)}, G_{(2)} \right)$ be a $2$-track groupoid.
\begin{enumerate}
\item There is a weak equivalence of $2$-track groupoids $\sk_{(1)} G \ral{\sim} G$ where the first groupoid of $\sk_{(1)} G$ is skeletal.
\item If $G$ is connected and $G_{(1)}$ is skeletal, then there is a weak equivalence of $2$-track groupoids $\sk_{(2)} G \ral{\sim} G$ where both groupoids of $\sk_{(2)} G$ are skeletal. 
\end{enumerate}
In particular, if $G$ is connected, then $\sk_{(2)} \sk_{(1)} G \ral{\sim} \sk_{(1)} G \ral{\sim} G$ is a weak equivalence between $G$ and a $2$-track groupoid whose constituent groupoids are both skeletal.
\end{lem}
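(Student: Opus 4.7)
The plan is to build each skeletalization as a full inclusion of a $2$-track subgroupoid, and then verify the inclusion is a weak equivalence by checking all three homotopy groups $\pi_0$, $\pi_1$, $\pi_2$ separately.

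For (1), first choose a skeleton $i \colon S \inj G_{(1)}$ containing the basepoint $0$; this is automatically an equivalence of pointed groupoids. Define $(\sk_{(1)} G)_{(2)}$ to be the full subgroupoid of $G_{(2)}$ on those objects $a$ with $q(a) \in \Star S$. Since the induced map $q \colon \Comp G_{(2)} \to \Star G_{(1)}$ is bijective, the fibers of $q$ are precisely the components of $G_{(2)}$, so this subgroupoid is a disjoint union of components of $G_{(2)}$; the strictly abelian structure together with the family of isomorphisms $\psi_a$ then restricts without modification. I would set $\sk_{(1)} G = (S, (\sk_{(1)} G)_{(2)})$ with $q$ restricted, and verify that the restricted $q$ still surjects onto $\Star S$ and induces a bijection on components. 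The inclusion $\sk_{(1)} G \to G$ is then a morphism of $2$-track groupoids, and induces isomorphisms on $\pi_0$ (because $S \inj G_{(1)}$ is an equivalence of groupoids), on $\pi_1$ (because $S$ is a full subgroupoid containing $0$), and on $\pi_2$ (because the basepoint $0 = \id^{\sq}_0$ of $G_{(2)}$ lies in the chosen full subgroupoid).

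For (2), connectedness together with $G_{(1)}$ skeletal forces $G_{(1)}$ to be a one-object groupoid, so that $\Star G_{(1)} = \Aut_{G_{(1)}}(0)$. Choose a skeleton $T \inj G_{(2)}$ containing the basepoint; $T$ contains exactly one object per component of $G_{(2)}$, hence, via the bijection $\Comp G_{(2)} \cong \Star G_{(1)}$, one object per element of $\Star G_{(1)}$. I would set $\sk_{(2)} G = (G_{(1)}, T)$ with quotient function $q \vert_T \colon T_0 \to \Star G_{(1)}$, which is now itself a bijection. The strictly abelian structure on $T$ is inherited from $G_{(2)}$, and fullness preserves all automorphism groups at objects in $T$, so the inclusion $\sk_{(2)} G \to G$ is a weak equivalence by the same argument as in (1). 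For the final statement, weak equivalences induce isomorphisms on $\pi_0$, so $\sk_{(1)} G$ inherits connectedness from $G$, and (2) applies; composing the two weak equivalences yields the chain $\sk_{(2)} \sk_{(1)} G \ral{\sim} \sk_{(1)} G \ral{\sim} G$.

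I expect the main obstacle to be the bookkeeping in (1), where one must confirm that the chosen union of components of $G_{(2)}$ lying over $\Star S$ still satisfies every clause of Definition \ref{def:2TrackGpd}; in particular, that the restricted $q$ remains surjective onto $\Star S$ and that no component of $G_{(2)}$ gets split in the process. Both of these rely on the fiberwise description of $q$ forced by the condition that $q$ is constant on components and bijective on $\Comp G_{(2)}$. Once this description is in hand, every other verification reduces to unwinding the definitions.
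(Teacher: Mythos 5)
The paper states this lemma without proof, treating it as a routine verification, so there is no argument to compare against. Your proof is correct and is exactly the expected one: the key point in part (1) is that the fibers of $q$ are single components of $G_{(2)}$ (forced by $q$ being constant on components and inducing a bijection $\Comp G_{(2)} \cong \Star G_{(1)}$), so the preimage of $\Star S$ is a well-defined union of components carrying the restricted strictly abelian structure; the isomorphisms on $\pi_0$, $\pi_1$, $\pi_2$ then follow from $S$ being a skeleton containing $0$ and from fullness at the basepoint, and parts (2) and the concluding chain go through as you describe.
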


\begin{lem} \label{lem:WeakEqDGpdSkel}
Let $G$ and $G'$ be connected $2$-track groupoids whose constituent groupoids are skeletal. If there are isomorphisms of homotopy groups $\phy_1 \colon \pi_1 G \simeq \pi_1 G'$ and $\phy_2 \colon \pi_2 G \simeq \pi_2 G'$, then there is a weak equivalence $\phy \colon G \ral{\sim} G'$.
\end{lem}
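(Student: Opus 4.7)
The plan is to exploit the skeletal hypothesis to obtain very concrete presentations of $G$ and $G'$, and then transport the morphism along the two given isomorphisms on homotopy groups.

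Since $G$ is connected we have $\pi_0 G = \Comp G_{(1)} = *$, so the skeletal groupoid $G_{(1)}$ has a single object, namely the basepoint $0$, with morphisms $\Aut_{G_{(1)}}(0) = \pi_1 G$. In particular $\Star G_{(1)} = \pi_1 G$. Since $G_{(2)}$ is skeletal, $\Comp G_{(2)} = G_{(2)0}$, and the quotient $q \colon G_{(2)0} \twoheadrightarrow \Star G_{(1)}$ descends to a bijection on components; hence $q$ itself is a bijection. Skeletality also forces $G_{(2)}$ to be a disjoint union of its automorphism groups, and strict abelianness gives canonical isomorphisms $\psi_a \colon \Aut_{G_{(2)}}(a) \xrightarrow{\cong} \pi_2 G$. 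Up to replacing the family $\{\psi_x\}$ by $\{\psi_0^{-1} \psi_x\}$ (which preserves the compatibility \eqref{eq:CompatIso}), we may assume $\psi_0 = \id$. An identical description applies to $G'$.

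Given this, I would build $\phi \colon G \to G'$ as follows. Let $\phi_{(1)}$ be the functor between one-object groupoids sending $0 \mapsto 0'$ and given on morphisms by $\phi_1$. On objects, define $\phi_{(2)} \colon G_{(2)0} \to G'_{(2)0}$ as the composite bijection $(q')^{-1} \circ \phi_1 \circ q$; in particular $\phi_{(2)}(0) = 0'$. On each automorphism group, define $\phi_{(2)}$ to be the composite
\[
\Aut_{G_{(2)}}(a) \xrightarrow{\psi_a} \pi_2 G \xrightarrow{\phi_2} \pi_2 G' \xrightarrow{(\psi'_{\phi_{(2)}(a)})^{-1}} \Aut_{G'_{(2)}}(\phi_{(2)} a),
\]
which extends (uniquely) to a functor on $G_{(2)}$ because there are no morphisms between distinct objects.

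It then remains to verify the two conditions of Definition \ref{def:2TrackGpd} and that $\phi$ is a weak equivalence. The compatibility with the structural isomorphisms $\psi$, $\psi'$ holds tautologically by our definition: rearranging the displayed composite gives exactly $\psi'_{\phi_{(2)}(a)} \circ \phi_{(2)} = \phi_2 \circ \psi_a$, which, using $\psi_0 = \id = \psi'_{0'}$, restricts at $a=0$ to $\phi_{(2)} = \phi_2$ on $\pi_2 G = \Aut_{G_{(2)}}(0)$. Compatibility with the quotient functions $q$ and $q'$ is immediate from the definition of $\phi_{(2)}$ on objects, since $q' \circ \phi_{(2)} = \phi_1 \circ q$ by construction. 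Functoriality of $\phi_{(2)}$ follows because it is a conjugate of the group homomorphism $\phi_2$ by the group isomorphisms $\psi_a$ and $\psi'_{\phi_{(2)}(a)}$. Finally, $\phi$ induces $\phi_1$ on $\pi_1$ by definition of $\phi_{(1)}$, and induces $\phi_2$ on $\pi_2$ by the calculation just noted; since connectedness is preserved, $\phi$ is a weak equivalence.

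The only point requiring real care is the normalization $\psi_0 = \id$ needed so that $\phi_{(2)}|_{\pi_2}$ matches $\phi_2$ on the nose rather than up to an automorphism of $\pi_2 G$; all subsequent verifications are straightforward diagram chases using that $\psi_a$, $\psi'_{a'}$, $\phi_1$, and $\phi_2$ are isomorphisms.
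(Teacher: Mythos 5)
Your construction of $\phy$ is exactly the paper's: $\phy_{(1)} = \phy_1$ regarded as an isomorphism of one-object groupoids, and $\phy_{(2)}$ given on objects by $(q')^{-1}\circ\phy_1\circ q$ and on each $\Aut(a)$ by $(\psi'_{\phy a})^{-1}\circ\phy_2\circ\psi_a$, with skeletality disposing of all cross-morphisms. You are also right that the delicate point is the structural-isomorphism square of Definition~\ref{def:2TrackGpd}: taken at $a = 0$ it reduces, for this $\phy_{(2)}$, to the identity $\psi'_{0'}\circ\phy_2 = \phy_2\circ\psi_0$, which is not automatic and which your normalization $\psi_0 = \id = \psi'_{0'}$ makes trivial. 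The gap is in the normalization itself. Replacing $\{\psi_x\}$ by $\{\psi_0^{-1}\psi_x\}$ does preserve \eqref{eq:CompatIso} and hence yields another valid strictly abelian structure on $G_{(2)}$, but it produces a \emph{different} object of $\DGpd$, and nothing you write shows it to be isomorphic (or even weakly equivalent) to the original $G$. In fact the very same $a=0$ square blocks this: any $\DGpd$-morphism $F \colon (G,\psi) \to (G,\psi_0^{-1}\psi)$ with $F_{(2)}|_{\Aut(0)}$ injective forces $\psi_0 = \id$ after all. So ``we may assume $\psi_0 = \id$'' is not a free move; it is an \emph{assumption} about the structural isomorphisms, not a normalization. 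The intended reading (which makes both your argument and the paper's very terse ``by construction'' go through) is that $\psi_0 = \id$ is part of the structure of a strictly abelian groupoid -- it holds for $\Pi_{(2)}(X)$, and without it the $a=0$ instance of the compatibility square already constrains which $\phy_2$ can be realized by a $\DGpd$-morphism. With that convention, your normalization is vacuous and the remainder of your proof is correct as written.
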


\begin{proof}
Since $G_{(1)}$ and ${G'_{(1)}}$ are skeletal, they are in fact groups, and the group isomorphism $\phy_1$ is an isomorphism of groupoids $\phy_{(1)} \colon G_{(1)} \ral{\simeq} G'_{(1)}$.

Now we define a functor $\phy_{(2)} \colon G_{(2)} \to G'_{(2)}$. On objects, it is given by the composite
\[
\xymatrix{
G_{(2)0} = \Comp G_{(2)} \ar[r]^-{q}_-{\simeq} & \Star G_{(1)} = G_{(1)}(0,0) = \pi_1 G \ar[r] & \\
\ar[r]^-{\phy_1}_-{\simeq} & \pi_1 G' = G'_{(1)}(0,0) = \Star G'_{(1)} & \Comp G'_{(2)} = G'_{(2)0} \ar[l]_-{q}^-{\simeq} \\  
}
\]
which is a bijection. On morphisms, $\phy_{(2)}$ is defined as follows. We have $G_{(2)}(a,b) = \emptyset$ when $a \neq b$, so there is nothing to define then. On the automorphisms of an object $a \in G_{(2)0}$, define $\phy_{(2)}$ as the composite
\[
\xymatrix{
G_{(2)}(a,a) = \Aut_{G_{(2)}}(a) \ar[r]^-{\psi_a}_-{\simeq} & \Aut_{G_{(2)}}(0) = \pi_2 G \ar[r] & \\
\ar[r]^-{\phy_2}_-{\simeq} & \pi_2 G' = \Aut_{G'_{(2)}}(0') & \Aut_{G'_{(2)}}(\phy(a)) = G'_{(2)}(\phy(a),\phy(a)). \ar[l]_-{\psi'_{\phy(a)}}^-{\simeq} \\
}
\]
Then $\phy_{(2)}$ is a functor and commutes with the structural isomorphisms, by construction. Thus $\phy = (\phy_{(1)}, \phy_{(2)}) \colon G \to G'$ is a morphism of $2$-track groupoids, and is moreover a weak equivalence.
\end{proof}

\begin{cor} \label{cor:WeakEqDGpd}
Let $G$ and $G'$ be connected $2$-track groupoids with isomorphic homotopy groups $\pi_i G \simeq \pi_i G'$ for $i = 1,2$. Then $G$ and $G'$ are weakly equivalent, i.e., there is a zigzag of weak equivalences between them.
\end{cor}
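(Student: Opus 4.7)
The plan is to reduce the statement to the skeletal case already handled by Lemma~\ref{lem:WeakEqDGpdSkel}, using the skeletalization procedure from the preceding lemma. First, I would apply the skeletalization result to both $G$ and $G'$, obtaining weak equivalences
\[
\sk_{(2)} \sk_{(1)} G \ral{\sim} G, \quad \sk_{(2)} \sk_{(1)} G' \ral{\sim} G',
\]
where both constituent groupoids of each skeletal replacement are skeletal. Note that connectedness is preserved by weak equivalence (indeed by $\pi_0$), so both $\sk_{(2)} \sk_{(1)} G$ and $\sk_{(2)} \sk_{(1)} G'$ are connected $2$-track groupoids whose constituent groupoids are skeletal, putting them in the setting of Lemma~\ref{lem:WeakEqDGpdSkel}.

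Next, since a weak equivalence of $2$-track groupoids induces isomorphisms on the homotopy groups $\pi_i$ for $i=1,2$ (Definition~\ref{def:HomotGpsDGpd}), the hypothesized isomorphisms $\pi_i G \simeq \pi_i G'$ transport to isomorphisms
\[
\pi_i \bigl( \sk_{(2)} \sk_{(1)} G \bigr) \simeq \pi_i \bigl( \sk_{(2)} \sk_{(1)} G' \bigr)
\]
for $i=1,2$. Then Lemma~\ref{lem:WeakEqDGpdSkel} applied to $\sk_{(2)} \sk_{(1)} G$ and $\sk_{(2)} \sk_{(1)} G'$ produces a weak equivalence
\[
\phy \colon \sk_{(2)} \sk_{(1)} G \ral{\sim} \sk_{(2)} \sk_{(1)} G'.
\]

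Splicing these three weak equivalences together gives the desired zigzag
\[
G \;\xleftarrow{\sim}\; \sk_{(2)} \sk_{(1)} G \;\xrightarrow[\phy]{\sim}\; \sk_{(2)} \sk_{(1)} G' \;\xrightarrow{\sim}\; G',
\]
which proves the corollary. There is no serious obstacle here: the work was done in the preceding two lemmas, and all that remains is assembling the pieces and checking that the homotopy-group isomorphisms survive passage to skeleta, which is immediate from the invariance of $\pi_i$ under weak equivalence.
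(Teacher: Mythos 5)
Your proof is correct and follows essentially the same route as the paper: apply the skeletalization lemma to both $G$ and $G'$, transport the homotopy-group isomorphisms along those weak equivalences, invoke Lemma~\ref{lem:WeakEqDGpdSkel} on the skeletal replacements, and splice the resulting zigzag.
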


\begin{proof}
Consider the zigzag of weak equivalences
\[
\xymatrix{
G & G' \\
\sk_{(1)} G \ar[u]^{\sim} & \sk_{(1)} G' \ar[u]_{\sim} \\
\sk_{(2)} \sk_{(1)} G \ar[u]^{\sim} \ar[r]^-{\phy}_-{\sim} & \sk_{(2)} \sk_{(1)} G' \ar[u]_{\sim} \\
}
\]
where the bottom morphism $\phy$ is obtained from Lemma~\ref{lem:WeakEqDGpdSkel}. 
\end{proof}

By Remark~\ref{rem:HomotGps}, the functor $\Pi_{(1,2)} \colon \Topp_* \to \DGpd$ induces a functor
\begin{equation} \label{eq:HoLDGpd}
\Pi_{(1,2)} \colon \Ho \left( \mathbf{connected } \, 2\mathbf{-Types} \right) \to \Ho \left( \DGpd \right) 
\end{equation}
where the left-hand side denotes the homotopy category of connected $2$-types (localized with respect to weak homotopy equivalences), and the right-hand side denotes the localization with respect to weak equivalences, as in Definition~\ref{def:HomotGpsDGpd}. 

\begin{prop}
The functor $\Pi_{(1,2)}$ in \eqref{eq:HoLDGpd} is not an equivalence of categories.
\end{prop}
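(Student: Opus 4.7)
The plan is to exhibit two connected $2$-types $X$ and $Y$ whose fundamental $2$-track groupoids are weakly equivalent but which are themselves not weakly homotopy equivalent as spaces. This directly contradicts the functor in \eqref{eq:HoLDGpd} being injective on isomorphism classes, hence prevents it from being an equivalence. The key leverage is Corollary~\ref{cor:WeakEqDGpd}: any two connected $2$-track groupoids with isomorphic $\pi_1$ and $\pi_2$ are already weakly equivalent in $\DGpd$. So it suffices to find two connected $2$-types with matching $\pi_1$ and $\pi_2$ but which are not equivalent as $2$-types.

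The standard examples suggested by the paragraph preceding the proposition (``it fails to encode the $\pi_1(X)$-action on $\pi_2(X)$'') are natural choices. First I would take $X = P_2 \R P^2$, the second Postnikov section of the real projective plane, with $\pi_1(X) = \Z/2$, $\pi_2(X) = \Z$, and the $\pi_1$-action on $\pi_2$ given by sign (this is immediate from the fact that the universal cover of $\R P^2$ is $S^2$ with the antipodal $\Z/2$-action). Second, I would take $Y = K(\Z/2,1) \x K(\Z,2)$, which is a connected $2$-type with the same homotopy groups $\pi_1(Y) = \Z/2$, $\pi_2(Y) = \Z$, but with the trivial $\pi_1$-action on $\pi_2$.

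Since the $\pi_1$-module structure on $\pi_2$ is a homotopy invariant, $X$ and $Y$ are not weakly homotopy equivalent, so they represent distinct isomorphism classes in $\Ho(\mathbf{connected} \, 2\mathbf{-Types})$. On the other hand, Remark~\ref{rem:HomotGps} gives $\pi_i \Pi_{(1,2)}(X) \cong \pi_i(X) \cong \pi_i(Y) \cong \pi_i \Pi_{(1,2)}(Y)$ for $i = 1, 2$, and both $\Pi_{(1,2)}(X)$ and $\Pi_{(1,2)}(Y)$ are connected (since $X$ and $Y$ are connected). Hence by Corollary~\ref{cor:WeakEqDGpd}, $\Pi_{(1,2)}(X)$ and $\Pi_{(1,2)}(Y)$ are weakly equivalent, so they represent the same isomorphism class in $\Ho(\DGpd)$. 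This shows that $\Pi_{(1,2)}$ identifies non-isomorphic objects and is therefore not an equivalence.

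The argument is essentially immediate given Corollary~\ref{cor:WeakEqDGpd}; there is no real obstacle, only the need to invoke the standard fact that the $\pi_1$-action on $\pi_2$ is a homotopy invariant of $2$-types that distinguishes $P_2 \R P^2$ from $K(\Z/2,1) \x K(\Z,2)$. If desired, one can emphasize the conceptual point: $2$-track groupoids see $\pi_1$ and $\pi_2$ as separate pieces but not the module structure linking them, nor the $k$-invariant in $H^3(\pi_1;\pi_2)$, which is precisely why they carry strictly less information than the full $2$-type.
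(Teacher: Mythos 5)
Your proposal is correct and follows the same logic as the paper's proof: use Corollary~\ref{cor:WeakEqDGpd} to show the $2$-track groupoids are weakly equivalent, then exhibit $2$-types with matching $\pi_1,\pi_2$ but distinct $\pi_1$-actions on $\pi_2$. The only difference is that the paper leaves the choice of $X$ and $Y$ abstract, while you supply the standard concrete pair $P_2 \R P^2$ and $K(\Z/2,1) \times K(\Z,2)$, which is a helpful addition but not a different argument.
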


\begin{proof}
Let $X$ and $Y$ be connected $2$-types with isomorphic homotopy groups $\pi_1$ and $\pi_2$, but distinct $\pi_1$-actions on $\pi_2$. Then $X$ and $Y$ are not weakly equivalent, but $\Pi_{(1,2)}(X)$ and $\Pi_{(1,2)}(Y)$ are weakly equivalent, by Corollary~\ref{cor:WeakEqDGpd}.
\end{proof}

\subsection{Comparison to bigroupoids}

Any algebraic model for (pointed) homotopy $2$-types has an underlying $2$-track groupoid. Using the globular description in Remark~\ref{rem:Globular}, the most direct comparison is to the bigroupoids of \cite{Hardie01}. A \emph{pointed} bigroupoid (resp. double groupoid) will mean one equipped with a chosen object, here denoted $x_0$ to emphasize that it is unrelated to the algebraic structure of the bigroupoid.

\begin{prop} \label{pr:Bigroupoid}
Let $\Pi_2^{\bigpd}(X)$ denote the homotopy bigroupoid of a space $X$ constructed in \cite{Hardie01}, where it was denoted $\Pi_2(X)$.
\begin{enumerate}
\item There is a forgetful functor $U$ from pointed bigroupoids to $2$-track groupoids.
\item For a pointed space $X$, there is a natural isomorphism of $2$-track groupoids $\Pi_{(1,2)} (X) \cong U \Pi_2^{\bigpd}(X)$.
\end{enumerate}
\end{prop}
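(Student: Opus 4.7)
The plan is to handle (1) and (2) in turn.

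For (1), given a pointed bigroupoid $(B, x_0)$, I would take $U(B)_{(1)} := \Ho(B)$ with basepoint $x_0$; this is a groupoid since every $1$-morphism of $B$ is an equivalence. I would then take $U(B)_{(2)}$ to be the groupoid with objects the $1$-morphisms $a \colon x \to x_0$ of $B$ (for varying $x \in B_0$), morphisms $a \Ra b$ the $2$-morphisms of $B$ (forcing $a$ and $b$ to share a source), basepoint $\id_{x_0}$, and composition inherited from vertical composition of $2$-morphisms. The map $q \colon U(B)_{(2) 0} \to \Star \Ho(B)$ sends $a$ to its $2$-isomorphism class. Two objects of $U(B)_{(2)}$ lie in the same component iff they are $2$-isomorphic in $B$, so the induced map $\Comp U(B)_{(2)} \to \Star \Ho(B)$ is a bijection.

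The subtle point of (1) is equipping $U(B)_{(2)}$ with structural isomorphisms. For each $a \colon x \to x_0$, I would define the inverse of $\psi_a$ as the horizontal whiskering of an automorphism $\al \colon \id_{x_0} \Ra \id_{x_0}$ by the identity $2$-morphism of $a$, namely $\psi_a^{-1}(\al) := \al \ot \id_a \colon a \Ra a$, using the left unitor $\id_{x_0} \cdot a \cong a$ to land in $\Aut_{U(B)_{(2)}}(a)$. Since $a$ is an equivalence, this whiskering is an equivalence of hom-categories, so $\psi_a^{-1}$, and hence $\psi_a$, is a group isomorphism. An Eckmann--Hilton argument shows that $\Aut_{U(B)_{(2)}}(a)$ is abelian, and the compatibility condition~\eqref{eq:CompatIso} with change-of-basepoint isomorphisms follows from the interchange law for horizontal and vertical composition in $B$. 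Functoriality of $U$ on pointed pseudo-functors is then routine.

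For (2), I would define $\Phi \colon \Pi_{(1,2)}(X) \to U \Pi_2^{\bigpd}(X)$ as follows. On the $(1)$-level, both $\Pi_{(1)}(X)$ and $\Ho \Pi_2^{\bigpd}(X)$ have the points of $X$ as objects and path-homotopy classes of paths as morphisms (since two paths admit a path homotopy between them iff they are path-homotopic), so $\Phi_{(1)}$ is the identity functor. On the $(2)$-level, each object of $\Pi_{(2)}(X)$ is a left $1$-cube, i.e., a path in $X$ ending at $x_0$, which is exactly an object of $U \Pi_2^{\bigpd}(X)_{(2)}$. On morphisms, $\Phi$ sends a left $2$-track $\{\al\}$ represented by $\al \colon I^2 \to X$ to the $2$-morphism of $\Pi_2^{\bigpd}(X)$ represented by $\al \circ \phy$, where $\phy \colon I^2 \to I^2$ is a fixed homeomorphism converting the ``cubical'' boundary pattern of a left $2$-cube to the ``globular'' boundary pattern of a path homotopy, as suggested by Remark~\ref{rem:Globular}. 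This is well-defined on rel-$\del I^2$ homotopy classes and is bijective on each hom-set because $\phy$ preserves $\del I^2$.

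The main obstacle is verifying that $\Phi$ preserves the remaining structure. Compatibility with vertical composition reduces to a planar reparametrization argument matching the square-gluing construction of~\eqref{eq:Gluing2Tracks} with the standard stacking of path homotopies in $\Pi_2^{\bigpd}(X)$. Compatibility with the quotient functions $q$ is immediate. The most delicate point is compatibility with the structural isomorphisms $\psi_a$: one must show that the picture~\eqref{StrucIso}, which defines $\psi_a^{-1}$ in $\Pi_{(2)}(X)$ by gluing an automorphism of the trivial $1$-cube with the identity $2$-track of $a$, becomes, after reparametrization by $\phy$, the right whiskering $\al \ot \id_a$ used to define $\psi_a^{-1}$ in $U \Pi_2^{\bigpd}(X)$; this is again a reparametrization argument on the square $I^2$. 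Naturality in $X$ is automatic, since both $\Pi_{(1,2)}$ and $\Pi_2^{\bigpd}$ are defined by post-composition with continuous maps.
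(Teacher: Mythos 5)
Your construction of $U$ in part (1) agrees with the paper's: $UB_{(1)} = \pi_0 B$, $UB_{(2)} = \coprod_{x} B(x,x_0)$, $q$ the natural quotient, and $\psi_a^{-1}$ given by whiskering an automorphism of the identity $1$-morphism by $\id^{\sq}_a$ and conjugating by the left unitor. Your observations that this is a group isomorphism because $a$ is an equivalence, and that the compatibility \eqref{eq:CompatIso} follows from the interchange law, are exactly the verifications the paper leaves as ``readily checked.'' Part (2) also follows the paper's outline (which gives essentially no detail) via the cubical-to-globular comparison hinted at in Remark~\ref{rem:Globular}.

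There is, however, one genuine error in your part (2): the reparametrizing map $\phy \colon I^2 \to I^2$ cannot be a homeomorphism. In the left-cubical boundary pattern the two nontrivial faces ($\de_0 \al$ on $\{0\}\x I$ and $\de_1\al$ on $I \x \{0\}$) are \emph{adjacent} edges sharing the corner $(0,0)$, whereas in the globular pattern the source and target paths lie on \emph{opposite} edges, with the two remaining edges constant. Since, for a generic left $2$-cube $\al$, the only subset of $\del I^2$ on which $\al$ is guaranteed to take the single value $\de a$ is the corner $\{(0,0)\}$, a reparametrization producing a constant vertical edge at $s=0$ must collapse that edge to the corner; hence $\phy$ is necessarily non-injective. (Equivalently: any self-homeomorphism of $I^2$ restricts to a self-homeomorphism of $S^1 = \del I^2$ and therefore cannot send an adjacent pair of edges to an opposite pair.) The correct statement is that $\phy$ is a quotient map, constant on $\{0\} \x I$, and one then argues bijectivity of $\Phi_{(2)}$ on hom-sets not by ``$\phy$ preserves $\del I^2$'' but via the fact that $\phy$ is a homotopy equivalence of pairs inducing a bijection on the relevant relative homotopy sets — the same style of argument that underlies the formulas for $\id^{\sq}_a$ and $\sq$ in the paper, all of which use such collapsing reparametrizations. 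The rest of your argument (matching $\sq$ with vertical pasting, matching $q$, and matching the structural isomorphisms via whiskering) is correct once this is repaired.
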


\begin{proof}
Let $B$ be a bigroupoid. We construct a $2$-track groupoid $UB$ as follows. The first constituent groupoid of $UB$ is the underlying groupoid of $B$
\[
UB_{(1)} := \pi_0 B
\]
obtained by taking the components of each mapping groupoid $B(x,y)$. The second constituent groupoid of $UB$ is a coproduct of mapping groupoids
\[
UB_{(2)} := \coprod_{x \in \Ob(B)} B(x,x_0).
\]
The quotient function $q \colon UB_{(2)0} \to \Star UB_{(1)}$ is induced by the natural quotient maps $\Ob \left( B(x,x_0) \right) \surj \pi_0 B(x,x_0)$. To define the structural isomorphisms
\[
\psi_a \colon \Aut(a) \ral{\cong} \Aut(c_{x_0})
\]
for objects $a \in UB_{(2)0}$, which are $1$-morphisms to the basepoint $a \colon x \to x_0$, consider the diagram
\[
\xymatrix @C=5.0pc {
x \rrlowertwocell<-18>_{a}{<5>\la} \rtwocell<6>^a_a{\hspace{1em} \id^{\sq}_a} & x_0 \rtwocell<6>^{c_{x_0}}_{c_{x_0}}{\al} & x_0 \lllowertwocell<-18>_{a}{<5>\la} \\ 
}
\]
where $\la \colon c_{x_0} \bu a \Ra a$ is the \emph{left identity} coherence $2$-isomorphism, $\bu$ denotes composition of $1$-morphisms, and $c_{x_0}$ is the identity $1$-morphism of the object $x_0$. (We kept our notation $\sq$ for composition of $2$-morphisms.) The inverse $\psi_a^{-1} \colon \Aut(c_{x_0}) \to \Aut(a)$ is defined by going from top to bottom in the diagram, namely
\[
\psi_a^{-1}(\al) = \la \sq \left( \al \bu \id^{\sq}_a \right) \sq \la^{\inv}.
\]
One readily checks that $UB$ is a $2$-track groupoid, that this construction $U$ is functorial, and that $U \Pi_2^{\bigpd}(X)$ is naturally isomorphic to $\Pi_{(1,2)}(X)$ as $2$-track groupoids. 
\end{proof}

\subsection{Comparison to double groupoids}

The homotopy double groupoid $\rho_2^{\sq}(X)$ from \cite{Brown02} is a cubical construction. Following the terminology therein, \emph{double groupoid} will be shorthand for \emph{edge symmetric double groupoid with connection}.

Let us recall the geometric idea behind $\rho_2^{\sq}(X)$. A path $a \colon I \to X$ has an underlying \emph{semitrack} $\lan a \ran$, defined as its equivalence class with respect to \emph{thin} homotopy rel $\del I$. A semitrack $\lan a \ran$ in turn has an underlying track $\{ a \}$. A square $u \colon I^2 \to X$ has an underlying $2$-track $\{ u \}$. A $2$-track $\{ u \}$ in turn has an underlying equivalence class $\{ u \}_T$ with respect to \emph{cubically thin homotopy}, i.e., a homotopy whose restriction to the boundary $\del I^2$ is thin (not necessarily stationary). The homotopy double groupoid $\rho_2^{\sq}(X)$ encodes semitracks $\lan a \ran$ in $X$ and $2$-tracks $\{ u \}_T$ up to cubically thin homotopy.

\begin{prop} \label{pr:DoubleGpd}
Let $\rho^{\sq}_2(X)$ denote the homotopy double groupoid of a space $X$ constructed in \cite{Brown02}.
\begin{enumerate}
\item There is a forgetful functor $U$ from pointed double groupoids to $2$-track groupoids.
\item For a pointed space $X$, there is a natural weak equivalence of $2$-track groupoids $\Pi_{(1,2)}(X) \ral{\sim} U \rho^{\sq}_2(X)$.
\end{enumerate}
\end{prop}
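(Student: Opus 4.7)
The plan is to follow the template of Proposition~\ref{pr:Bigroupoid}, adapted to the cubical flavor of double groupoids.

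For part~(1), given a pointed double groupoid $B$ with basepoint $x_0$, I would define $UB$ as follows. Let $UB_{(1)}$ be the homotopy category of $B$: objects are those of $B$, and morphisms are edges of $B$ modulo the relation that identifies $a, b \colon x \to y$ whenever there exists a square with top $= e_x$, bottom $= e_y$, left $= a$, and right $= b$. Let $UB_{(2)}$ have as objects the edges $a \colon x \to x_0$ in $B$, and as morphisms $\alpha \colon a \Rightarrow b$ the squares of $B$ whose top and right edges equal $e_{x_0}$, with left edge $a$ and bottom edge $b$. The $L$-shape composition pictured in \eqref{eq:Gluing2Tracks} of two such morphisms $\alpha \colon a \Rightarrow b$ and $\beta \colon b \Rightarrow c$ should be realized as the vertical composite $\alpha \cdot_v \beta^T$, where $\beta^T$ is the transpose of $\beta$ constructed from the connections and edge-symmetry of $B$; note that $\beta^T$ has left $= e_{x_0}$, right $= e_{x_0}$, top $= b$, bottom $= c$, so the bottom of $\alpha$ matches the top of $\beta^T$. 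The quotient $q$ sends an edge $a$ to its class in $UB_{(1)}$, and the structural isomorphism $\psi_a \colon \Aut_{UB_{(2)}}(a) \ral{\cong} \Aut_{UB_{(2)}}(e_{x_0})$ is defined by whiskering $a$ with its inverse using the connections, exactly as in the bigroupoid case.

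For part~(2), I would define the comparison map $\Phi \colon \Pi_{(1,2)}(X) \to U \rho^{\sq}_2(X)$ by sending a left path $a$ to its semitrack $\lan a \ran$, a left 2-track $\{ \alpha \}$ to its cubically thin homotopy class $\{ \alpha \}_T$, and on the first constituent groupoid using the canonical isomorphism $\Pi_{(1)}(X) \cong U \rho^{\sq}_2(X)_{(1)}$ (both being models of the fundamental groupoid of $X$). The map is well-defined because any homotopy rel $\del I^2$ is trivially cubically thin.

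To check that $\Phi$ is a weak equivalence, I would verify that it induces an isomorphism on $\pi_i$ for $i = 0, 1, 2$. By Remark~\ref{rem:HomotGps}, $\pi_i \Pi_{(1,2)}(X) \cong \pi_i(X, x_0)$; by the main results of \cite{Brown02}, the same identification holds for $\pi_i U \rho^{\sq}_2(X)$, and an inspection shows that $\Phi$ induces the identity on $\pi_i(X, x_0)$.

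The hard part will be verifying that the $L$-shape composition of left 2-tracks in $\Pi_{(2)}(X)$ agrees, under $\Phi$, with the vertical-transpose composition in $U \rho^{\sq}_2(X)$. Geometrically this reduces to comparing two reparametrizations of the $L$-shape $I^2 \cup_{I^1} I^2$ as a unit square, and to seeing that cubically thin homotopy bridges the difference. The compatibility of $\Phi$ with the structural isomorphisms $\psi_a$ on both sides is a further, more routine, check.
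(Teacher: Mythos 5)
Your proposal follows the paper's overall plan: define $UD_{(1)}$ as the edge groupoid of $D$ modulo the homotopy relation on $1$-morphisms, take $UD_{(2)}$ to have as objects the edges ending at $x_0$ and as morphisms $a \Rightarrow b$ the squares with left $= a$, bottom $= b$ and the other two faces degenerate at $x_0$, build the $\sq$-structure from the connections, and compare with $\Pi_{(1,2)}(X)$ by passing to (cubically) thin homotopy classes. That is exactly what the paper does.

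There is, however, an error in your description of the composition. A square $\beta^T$ with top $= b$, bottom $= c$, and left $=$ right $= e_{x_0}$ cannot exist when $b,c \colon x \to x_0$ with $x \neq x_0$: its top-left vertex would have to be simultaneously $x$ (source of the top edge $b$) and $x_0$ (endpoint of the left edge $e_{x_0}$). And even if it did, the vertical composite $\alpha \cdot_v \beta^T$ would not be formed, because the left edges $a \colon x \to x_0$ of $\alpha$ and $e_{x_0}$ of $\beta^T$ are not composable in the required order. The square that actually fits below $\alpha$ has left edge degenerate at $x$, not at $x_0$, and it is not obtained from $\beta$ by any symmetry of $I^2$ alone; it is the horizontal pasting $\Gamma^{+}(b) +_1 \beta$ of the connection square $\Gamma^{+}(b)$ with $\beta$. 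With that correction, the composition becomes $v \sq u = \left( \Gamma^{+}(b) +_1 v \right) +_2 u = \left( \Gamma^{+}(b) +_2 u \right) +_1 v$, which is the paper's formula. Finally, you explicitly flag as "the hard part" the verification that $\Phi$ respects this composition and the structural isomorphisms $\psi_a$, and then leave it undone; the paper's proof spells out the composition, identities, inverses, and $\psi_a^{-1}$ entirely in terms of $+_1$, $+_2$, $\Gamma^{\pm}$, $\ep$, precisely so that these verifications reduce to identities in a double groupoid with connections. As written, your argument is an outline with a wrong boundary computation at the crucial step.
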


\begin{proof}
We adopt the notation of \cite{Brown02}, including that compositions in a double groupoid are written in diagrammatic order, i.e., $a + b$ denotes the composition $x \ral{a} y \ral{b} z$. However, we keep our graphical convention for the two axes:

\begin{figure}[h]
\begin{tikzpicture}
\draw [->] (0,0) -- (0,1);
\draw [->] (0,0) -- (1,0);
\node [left] at (0,1) {$2$};
\node [below] at (1,0) {$1$.};
\end{tikzpicture}
\end{figure}

Let $D$ be a double groupoid, whose data is represented in the diagram of sets
\[
\xymatrix @R=5pc @C=5pc {
D_2 \ar@<1ex>[d]^-{\del^{+}_1} \ar@<-1ex>[d]_-{\del^{-}_1} \ar@<1ex>[r]^-{\del^{-}_2} \ar@<-1ex>[r]_-{\del^{+}_2} & D_1 \ar[l]|-{\ep_2} \ar@<1ex>[d]^-{\del^{+}_1} \ar@<-1ex>[d]_-{\del^{-}_1} \\
D_2 \ar[u]|-{\ep_1} \ar@<1ex>[r]^-{\del^{-}_1} \ar@<-1ex>[r]_-{\del^{+}_1} & D_0 \ar[l]|-{\ep}\ar[u]|-{\ep} \\
}
\]
along with connections $\Ga^{-}, \Ga^{+} \colon D_1 \to D_2$. Two $1$-morphisms $a,b \in D_1$ with same endpoints $\del^{-}_1(a) = \del^{-}_1(b) = x$, $\del^{+}_1(a) = \del^{+}_1(b) = y$ are called \emph{homotopic} if there exists a $2$-morphism $u \in D_2$ satisfying $\del^{-}_2(u) = a$, $\del^{+}_2(u) = b$, $\del^{-}_1(u) = \ep(x)$, $\del^{+}_2(u) = \ep(y)$. We write $a \sim b$ if $a$ and $b$ are homotopic.

We now define the underlying $2$-track groupoid $UD$. The first constituent groupoid $UD_{(1)}$ has object set $D_0$ and morphism set $D_1 / \sim$, with groupoid structure inherited from the groupoid $(D_0,D_1)$. The second constituent groupoid $UD_{(2)}$ has object set
\[
UD_{(2)0} := \left\{ a \in D_1 \mid \del^{+}_1(a) = x_0 \right\}.
\]
A morphism in $UD_{(2)}$ from $a$ to $b$ is an element $u \in D_2$ satisfying $\del^{-}_1(u) = a$, $\del^{-}_2(u) = b$, $\del^{+}_1(u) = \ep(x_0)$, $\del^{+}_2(u) = \ep(x_0)$, as illustrated here:

\begin{figure}[H]
\begin{tikzpicture}
\draw [->-=0.5] (0,0) -- (0,2);
\draw [->-=0.5] (0,0) -- (2,0);
\draw (0,2) -- (2,2) -- (2,0);
\node [left] at (0,1) {$a$};
\node [below] at (1,0) {$b$.};
\node [above] at (1,2) {$x_0$};
\node [right] at (2,1) {$x_0$};
\node at (1,1) {$u$};
\end{tikzpicture}
\end{figure}

Composition in $UD_{(2)}$ is defined as follows. Given $1$-morphisms $a,b,c \colon x \to x_0$ in $D_1$ and morphisms $u \colon a \Ra b$ and $v \colon b \Ra c$ in $UD_{(2)}$, their composition $v \sq u \colon a \Ra c$ is defined by
\begin{align*}
v \sq u &= \left( \Ga^{+}(b) +_2 u \right) +_1 \left( v +_2 \odot_{x_0} \right) \\
&= \left( \Ga^{+}(b) +_2 u \right) +_1 v \\
&= \left( \Ga^{+}(b) +_1 v \right) +_2 u
\end{align*}
as illustrated here:

\begin{figure}[H]
\begin{tikzpicture}
\draw [->-=0.5] (0,2) -- (0,4);
\draw [->-=0.5] (0,2) -- (2,2);
\draw [->-=0.5] (2,0) -- (2,2);
\draw [->-=0.5] (2,0) -- (4,0);
\draw [->-=0.5] (0,0) -- (2,2);
\draw (0,4) -- (4,4) -- (4,0);
\draw (0,2) -- (0,0) -- (2,0);
\draw (2,4) -- (2,2) -- (4,2);
\node [left] at (0,3) {$a$};
\node [below] at (1,2) {$b$};
\node [left] at (1,1) {$b$};
\node [left] at (2,1) {$b$};
\node [below] at (3,0) {$c$.};
\node [above] at (1,4) {$x_0$};
\node at (3,3) {$x_0$};
\node [right] at (4,1) {$x_0$};
\node at (1,3) {$u$};
\node at (3,1) {$v$};
\node [left] at (0,1) {$x$};
\node [below] at (1,0) {$x$};
\end{tikzpicture}
\end{figure}

The identity morphisms in $UD_{(2)}$ are given by $\id^{\sq}_a = \Ga^{-}(a)$. The inverse of $u \colon a \Ra b$ is given by
\begin{align*}
u^{\inv} &= \left( (-_1) \Ga^{+}(b) +_2 (-_1) u \right) +_1 \left( \ep_2(a) +_2 \Ga^{-}(a) \right) \\
&= \left( (-_1) \Ga^{+}(b) +_2 (-_1) u \right) +_1 \Ga^{-}(a)
\end{align*}
as illustrated here:

\begin{figure}[H]
\begin{tikzpicture}
\draw [->-=0.5] (2,2) -- (0,2);
\draw [->-=0.5] (2,2) -- (2,4);
\draw [->-=0.5] (2,2) -- (4,2);
\draw [->-=0.5] (0,0) -- (0,2);
\draw [->-=0.5] (2,0) -- (4,0);
\draw [->-=0.5] (2,0) -- (0,2);
\draw [->-=0.5] (2,2) -- (4,4);
\draw [->-=0.5] (2,1) -- (4,1);
\draw [->,double] (1.2,3.2) -- (0.8,2.8);
\draw (0,2) -- (0,4) -- (4,4) -- (4,0);
\draw (0,0) -- (2,0) -- (2,2);
\node [below] at (1,2) {$b$};
\node [right] at (2,3) {$a$};
\node [left] at (1,1) {$b$};
\node [right] at (0,1) {$b$};
\node [above] at (3,2) {$a$};
\node [below] at (3,0) {$a$};
\node [below] at (3,1) {$a$};
\node [below] at (3,3) {$a$};
\node [left] at (0,3) {$x_0$};
\node [above] at (1,4) {$x_0$};
\node [above] at (3,4) {$x_0$};
\node [right] at (4,3) {$x_0$};
\node [right] at (4,1) {$x_0$};
\node [above left] at (1,3) {$u$};
\node [below] at (1,0) {$x$};
\node [left] at (2,1) {$x$};
\end{tikzpicture}
\end{figure}

The structural isomorphisms $\psi_a^{-1} \colon \Aut(\ep(x_0)) \to \Aut(a)$ are defined by
\begin{align*}
\psi_a^{-1}(u) &= \left( \Ga^{-}(a) +_2 \odot_{x_0} \right) +_1 \left( \odot_{x_0} +_2 u \right) \\
&= \Ga^{-}(a) +_1 u \\ 
&= \Ga^{-}(a) +_2 u 
\end{align*}
as illustrated here:

\begin{figure}[H]
\begin{tikzpicture}
\draw [->-=0.5] (0,0) -- (0,2);
\draw [->-=0.5] (0,0) -- (2,2);
\draw [->-=0.5] (0,0) -- (2,0);
\draw (0,2) -- (0,4) -- (4,4) -- (4,0) -- (2,0);
\draw (0,2) -- (4,2);
\draw (2,0) -- (2,4);
\node [left] at (0,1) {$a$};
\node [left] at (1,1) {$a$};
\node [below] at (1,0) {$a$};
\node at (1,3) {$x_0$};
\node at (3,1) {$x_0$};
\node [above] at (3,4) {$x_0$};
\node [right] at (4,3) {$x_0$};
\node at (3,3) {$u$};
\end{tikzpicture}
\end{figure}

The quotient function $q \colon UD_{(2)0} \surj \Star UD_{(1)}$ is induced by the quotient function $D_1 \surj D_1 / \sim$. One readily checks that $UD$ is a $2$-track groupoid, and that this construction $U$ is functorial.

For a pointed space $X$, define a comparison map $\Pi_{(1,2)}(X) \to U \rho_2^{\sq}(X)$ which is an isomorphism on $\Pi_{(1)}(X)$, and which quotients out the thin homotopy relation between left paths in $X$ and the cubically thin homotopy relation between left $2$-tracks. This defines a natural weak equivalence of $2$-track groupoids.
\end{proof}

\begin{bibdiv}
\begin{biblist}

\bib{Adams58}{article}{
  author={Adams, J. F.},
  title={On the structure and applications of the Steenrod algebra},
  journal={Comment. Math. Helv.},
  volume={32},
  date={1958},
  pages={180--214},
  issn={0010-2571},
  review={\MR {0096219 (20 \#2711)}},
}

\bib{Baues06}{book}{
  author={Baues, Hans-Joachim},
  title={The algebra of secondary cohomology operations},
  series={Progress in Mathematics},
  volume={247},
  publisher={Birkh\"auser Verlag},
  place={Basel},
  date={2006},
  pages={xxxii+483},
  isbn={3-7643-7448-9},
  isbn={978-3-7643-7448-8},
  review={\MR {2220189 (2008a:55015)}},
}

\bib{BauesJ06}{article}{
  author={Baues, Hans-Joachim},
  author={Jibladze, Mamuka},
  title={Secondary derived functors and the Adams spectral sequence},
  journal={Topology},
  volume={45},
  date={2006},
  number={2},
  pages={295 \ndash 324},
  issn={0040-9383},
  review={\MR {2193337 (2006k:55031)}},
  doi={10.1016/j.top.2005.08.001},
}

\bib{BauesB10}{article}{
  author={Baues, Hans Joachim},
  author={Blanc, David},
  title={Stems and spectral sequences},
  journal={Algebr. Geom. Topol.},
  volume={10},
  date={2010},
  number={4},
  pages={2061--2078},
  issn={1472-2747},
  review={\MR {2728484 (2012c:55017)}},
  doi={10.2140/agt.2010.10.2061},
}

\bib{BauesJ11}{article}{
  author={Baues, Hans-Joachim},
  author={Jibladze, Mamuka},
  title={Dualization of the Hopf algebra of secondary cohomology operations and the Adams spectral sequence},
  journal={J. K-Theory},
  volume={7},
  date={2011},
  number={2},
  pages={203 \ndash 347},
  issn={1865-2433},
  review={\MR {2787297 (2012h:55023)}},
  doi={10.1017/is010010029jkt133},
}

\bib{Baues15}{article}{
  author={Baues, Hans-Joachim},
  author={Blanc, David},
  title={Higher order derived functors and the Adams spectral sequence},
  journal={J. Pure Appl. Algebra},
  volume={219},
  date={2015},
  number={2},
  pages={199--239},
  issn={0022-4049},
  review={\MR {3250522}},
  doi={10.1016/j.jpaa.2014.04.018},
}

\bib{Blanc11}{article}{
  author={Blanc, David},
  author={Paoli, Simona},
  title={Two-track categories},
  journal={J. K-Theory},
  volume={8},
  date={2011},
  number={1},
  pages={59--106},
  issn={1865-2433},
  review={\MR {2826280 (2012h:18021)}},
  doi={10.1017/is010003020jkt116},
}

\bib{BousfieldK72}{article}{
  author={Bousfield, A. K.},
  author={Kan, D. M.},
  title={The homotopy spectral sequence of a space with coefficients in a ring},
  journal={Topology},
  volume={11},
  date={1972},
  pages={79--106},
  issn={0040-9383},
  review={\MR {0283801 (44 \#1031)}},
}

\bib{Bousfield78}{article}{
  author={Bousfield, A. K.},
  author={Friedlander, E. M.},
  title={Homotopy theory of $\Gamma $-spaces, spectra, and bisimplicial sets},
  conference={ title={Geometric applications of homotopy theory (Proc. Conf., Evanston, Ill., 1977), II}, },
  book={ series={Lecture Notes in Math.}, volume={658}, publisher={Springer, Berlin}, },
  date={1978},
  pages={80--130},
  review={\MR {513569 (80e:55021)}},
}

\bib{Brown02}{article}{
  author={Brown, Ronald},
  author={Hardie, Keith A.},
  author={Kamps, Klaus Heiner},
  author={Porter, Timothy},
  title={A homotopy double groupoid of a Hausdorff space},
  journal={Theory Appl. Categ.},
  volume={10},
  date={2002},
  pages={71--93},
  issn={1201-561X},
  review={\MR {1883479 (2003d:18010)}},
}

\bib{Cegarra12}{article}{
  author={Cegarra, Antonio Mart{\'{\i }}nez},
  author={Heredia, Benjam{\'{\i }}n A.},
  author={Remedios, Josu{\'e}},
  title={Double groupoids and homotopy 2-types},
  journal={Appl. Categ. Structures},
  volume={20},
  date={2012},
  number={4},
  pages={323--378},
  issn={0927-2852},
  review={\MR {2943635}},
  doi={10.1007/s10485-010-9240-1},
}

\bib{Hardie01}{article}{
  author={Hardie, K. A.},
  author={Kamps, K. H.},
  author={Kieboom, R. W.},
  title={A homotopy bigroupoid of a topological space},
  journal={Appl. Categ. Structures},
  volume={9},
  date={2001},
  number={3},
  pages={311--327},
  issn={0927-2852},
  review={\MR {1836257 (2002f:18011)}},
  doi={10.1023/A:1011270417127},
}

\bib{MandellM01}{article}{
  author={Mandell, M. A.},
  author={May, J. P.},
  author={Schwede, S.},
  author={Shipley, B.},
  title={Model categories of diagram spectra},
  journal={Proc. London Math. Soc. (3)},
  volume={82},
  date={2001},
  number={2},
  pages={441--512},
  issn={0024-6115},
  review={\MR {1806878 (2001k:55025)}},
  doi={10.1112/S0024611501012692},
}

\bib{Maunder64}{article}{
  author={Maunder, C. R. F.},
  title={On the differentials in the Adams spectral sequence},
  journal={Proc. Cambridge Philos. Soc.},
  volume={60},
  date={1964},
  pages={409--420},
  review={\MR {0167980 (29 \#5245)}},
}

\bib{Tamsamani99}{article}{
  author={Tamsamani, Zouhair},
  title={Sur des notions de $n$-cat\'egorie et $n$-groupo\"\i de non strictes via des ensembles multi-simpliciaux},
  language={French, with English summary},
  journal={$K$-Theory},
  volume={16},
  date={1999},
  number={1},
  pages={51--99},
  issn={0920-3036},
  review={\MR {1673923 (99m:18007)}},
  doi={10.1023/A:1007747915317},
}

\end{biblist}
\end{bibdiv}

\end{document}